\newcommand{\Span}{\operatorname{span}}
\newcommand{\f}{{\fI}}
\newcommand{\g}{{\gI}}
\newcommand{\fI}{{\{f_i\}_{i \in I}}}
\newcommand{\fJ}{{\{f_j\}_{j \in J}}}
\newcommand{\pfJ}{{\{\pi f_j\}_{j \in J}}}
\newcommand{\gI}{{\{g_i\}_{i \in I}}}
\newcommand{\diagf}{{\{\diag{f_i}\}_{i \in I}}}
\newcommand{\fdiag}{\diag{f}}
\newcommand{\gdiag}{\diag{g}}
\newcommand{\diagfI}{{\{\diag{f_i}\}_{i \in I}}}
\newcommand{\supp}{\operatorname{supp}}
\newcommand{\R}{\mathbb{R}}
\newcommand{\N}{\mathbb{N}}
\newcommand{\Z}{\mathbb{Z}}
\newcommand{\Q}{\mathbb{Q}}
\newcommand{\sub}{\subseteq}
\newcommand{\C}{\mathbb{C}}
\newcommand{\tG}{\widetilde{G}}
\newcommand{\conv}{\operatorname{conv}} 
\newcommand{\diag}[1]{\widetilde{#1}}
\newcommand{\Hn}{H_n}
\newcommand{\F}{F}
\newcommand{\G}{G}
\newcommand{\fpos}[1]{\mathbb{F}_{#1}}
\newcommand{\fpF}{\fpos{\F}}
\newcommand{\fpG}{\fpos{\G}}
\newcommand{\Hnwozero}{\Hn\setminus\{0\}}
\newcommand{\vect}[1]{\begin{bmatrix} #1 \end{bmatrix}}
\newcommand{\aaa}{\mathbf{a}}
\newcommand{\mb}[1]{\mathbf{#1}}
\newcommand{\floor}[1]{\left\lfloor #1 \right\rfloor}
\newcommand{\ceil}[1]{\left\lceil #1 \right\rceil}
\newcommand{\subsetsum}[1]{\operatorname{SubSum}\left(#1\right)}
\newcommand{\spandex}[1]{\mathcal{I}\left(#1\right)}
\newcommand{\Real}[0]{\mathbb{R}}
\newcommand{\inter}{\operatorname{relint}}
\newcommand{\Poly}{\mathcal{P}}
\newcommand*\colvec[1]{
        \global\colveccount#1
        \begin{bmatrix}
        \colvecnext
}
\def\colvecnext#1{
        #1
        \global\advance\colveccount-1
        \ifnum\colveccount>0
                \\
                \expandafter\colvecnext
        \else
                \end{bmatrix}
        \fi
}
\newcommand{\calD}{\mathcal{D}}
\newcommand{\ms}{\{v_i\}_{i=1}^m}
\newcommand{\inpro}[1]{\langle #1 \rangle}
\newcommand{\hb}[1]{\mathfrak{h}_{\{#1\}}}
\newcommand{\h}{\mathfrak{h}}
\newcommand{\set}[1]{\left\{#1\right\}}
\newcommand{\rowred}{\operatorname{rr}}
\newcommand{\Us}{\mathcal{S}} 
\newcommand{\US}{\Us}
\theoremstyle{remark}
\newtheorem{theorem}{Theorem}
\newtheorem{lemma}{Lemma}
\newtheorem{corollary}{Corollary}
\newtheorem{proposition}{Proposition}
\newtheorem{remark}{Remark}
\newtheorem{example}{Example}
\newtheorem{definition}{Definition}
\newtheorem{conjecture}{Conjecture}
\newtheorem{obs}{Observation}
\title{On Structural Decompositions of Finite Frames}
\author[Chan et al.]{Alice Z.-Y. Chan}
\address{Department of Mathematics, UC-San Diego}
\email{azchan[at]ucsd[dot]edu}
\author[]{Martin S. Copenhaver}
\address{Operations Research Center, MIT}
\email{mcopen[at]mit[dot]edu}
\author[]{Sivaram K. Narayan}
\address{Department of Mathematics, Central Michigan University}
\email{naray1sk[at]cmich[dot]edu}
\author[]{Logan Stokols}
\address{Department of Mathematics, UT Austin}
\email{lstokols[at]math[dot]utexas[dot]edu}
\author[]{Allison Theobold}
\address{Department of Statistics, Montana State University}
\email{allison.theobold[at]msu[dot]montana[dot]edu}
\thanks{Research supported  by NSF-REU Grant DMS 11-56890.}
\subjclass[2010]{Primary 42C15, 05B20, 15A03, 06A07}
\date{\today}
\keywords{Frames, Tight Frames, Prime Frames, Frame Decompositions}
\begin{document}

\begin{abstract}
A \emph{frame} in an $n$-dimensional Hilbert space $H_n$ is a possibly redundant collection of vectors $\{f_i\}_{i\in I}$ that span the space. A \emph{tight} frame is a generalization of an orthonormal basis. A frame $\{f_i\}_{i\in I}$ is said to be \emph{scalable} if there exist nonnegative scalars $\{c_i\}_{i\in I}$ such that $\{c_if_i\}_{i\in I}$ is a tight frame. In this paper we study the combinatorial structure of frames and their decomposition into tight or scalable subsets by using partially-ordered sets (posets). We define  the \emph{factor poset} of a frame $\{f_i\}_{i\in I}$ to be a collection of subsets of $I$ ordered by inclusion so that nonempty $J\subseteq I$ is in the factor poset if and only if $\{f_j\}_{j\in J}$ is a tight frame for $H_n$. A similar definition is given for the scalability poset of a frame. We prove conditions which factor posets satisfy and use these to study the \emph{inverse factor poset problem}, which inquires when there exists a frame whose factor poset is some given poset $P$. We determine a necessary condition for solving the inverse factor poset problem in $H_n$ which is also sufficient for $H_2$. We describe how factor poset structure of frames is preserved under orthogonal projections. We also consider the enumeration of the number of possible factor posets and bounds on the size of factors posets. We then turn our attention to scalable frames and present partial results regarding when a frame can be scaled to have a given factor poset.
\end{abstract}

\maketitle


\section{Introduction}

Frames are systems for representing finite- or infinite-dimensional Hilbert space elements. A frame for a finite-dimensional Hilbert space is a spanning set that is not necessarily a basis. The concept of frames was introduced by Duffin and Schaeffer \cite{duffin} in 1952. Frames began to be studied widely after the landmark paper of Daubechies, Grossman and Meyer \cite{daub} in 1986. They have become a crucial component in the state-of-the-art techniques in signal processing, information theory, engineering, and computer science. The areas of wavelets and, more recently, compressive sensing are just two settings in which frames play an essential role.

Redundancy of frame vectors in finite-dimensional frames plays a pivotal role in the construction of stable signal representations and in mitigating the effect of losses in transmission of signals through communication channels. Because of the usefulness in applications, finite frames have been studied intensively in the recent years \cite{finiteframes}. 

In this paper, we study two of the important classes of frames in finite-dimensional Hilbert spaces $H_n$: tight frames and scalable frames. A tight frame $F$ for $H_n$ is said to be $\emph{prime}$ if no proper subset of $F$ is a tight frame for $\Hn$. Therefore, every tight frame can be written as a union of prime tight frames \cite{prime}. This leads us to study the combinatorial structure of tight frames and scalable frames. In Section 2 we present the necessary preliminaries from finite frame theory. Factor posets are studied in detail in Section 3. Some necessary conditions are given for a poset to be a factor poset of a frame. We then answer the question in $H_2$ of finding a frame whose factor poset equals a given poset, known as the \emph{inverse factor poset problem}. In addition we study how factor poset structure is preserved under orthogonal projections. We also present some bounds on the size of factor posets and consider the problem of enumerating all possible factor posets with a given number of vectors. In Section 4 we turn our attention to scalable frames and the scalability poset. We characterize scalings that produce non-prime frames and close with an open problem.

\section{Preliminaries}

Throughout this paper we only consider vectors in $\R^n$ or $\C^n$. If a result holds true in both $\R^n$ and $\C^n$ then we will indicate that the result holds for any $n$-dimensional Hilbert space $H_n$. Otherwise, we will state the result only for $\R^n$. A frame is defined as follows.

\begin{definition} A sequence $\fI\sub\Hn$ is a frame for $\Hn$ with frame bounds $0<A\leq B<\infty$ if for all $x\in\Hn$,
\begin{equation}\label{frameDefn}
A\|x\|^2 \leq \sum_{i\in I}|\langle x, f_i\rangle|^2 \leq B\|x\|^2.
\end{equation}
\end{definition}

It has been proven that in $\Hn$ a frame is equivalent to a spanning set.

\begin{theorem}[{{\cite[p. 99]{ffu}}}] A sequence of vectors in $\Hn$ is a frame for $\Hn$ if and only if it is a spanning set for $\Hn$.
\end{theorem}

A frame can be viewed as a generalization of a basis. There are special types of frames that generalize an orthonormal basis. 

\begin{definition} A frame $\f$ is said to be $\lambda$-\emph{tight} if $\lambda =A=B$ in Eq. \eqref{frameDefn} and is said to be $\emph{Parseval}$ if $A=B=1$. 
\end{definition}

The following operators associated with frames are useful in the study of frames. Henceforth, for convenience, we fix the set of indices of a frame as $I = \{1,2,\ldots,k\}$. For a sequence $\fI$ we define the $\emph{analysis}$ operator $\theta : \Hn \rightarrow H_k$ by
$$\theta(x) = \sum_{i=1}^k \langle x,f_i\rangle e_i,$$
where $\{e_i\}_{i\in I}$ is an orthonormal basis for $H_k$. The adjoint of $\theta$, namely $\theta^*: H_k \rightarrow \Hn$, is defined by 
$$\theta^*(e_i) = f_i$$ for $i \in I$.
This operator $\theta^*$ is called the $\emph{synthesis}$ operator. Using the standard orthonormal bases for the spaces $H_n$ and $H_k$ the operators $\theta$ and $\theta^*$ can be represented by matrices as 
$$\theta^* = \begin{bmatrix} \uparrow & & \uparrow \\ f_1 & \cdots & f_k \\ \downarrow & & \downarrow \end{bmatrix}\text{\quad and \quad} \theta = \begin{bmatrix} \leftarrow f_1^* \rightarrow \\ \vdots \\ \leftarrow f_k^* \rightarrow 
\end{bmatrix}.$$ 

The $\emph{frame}$ operator $S : \Hn \rightarrow \Hn$ is given by $S (x) = \theta^* \theta( x )= \theta^* (\sum_{i\in I} \langle x,f_i \rangle e_i) = \sum_{i\in I} \langle x, f_i \rangle f_i$. The $\emph{Gramian}$ operator $G:H_k \rightarrow H_k$ is given by $G=\theta\theta^*$ and is represented by the $k\times k$ matrix $[G_{ij}]=[\langle f_j, f_i \rangle]$. The following theorem gives some important equivalent formulations of frames.

\begin{theorem} [{{\cite[pp. 105-107]{ffu}}}]
Let $\fI$ be a sequence of vectors in $\Hn$ with associated operators defined as above. The following hold:
\begin{enumerate}
\item $\fI$ is a frame if and only if $S$ is an invertible (positive) operator.
\item $\fI$ is a $\lambda$-tight frame if and only if $S=\lambda I_n$, where $I_n$ denotes the identity matrix of size $n$. 
\item $\fI$ is a Parseval frame if and only if $S=I_n$. 
\end{enumerate}
\end{theorem}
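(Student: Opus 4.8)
The plan is to reduce all three parts to a single algebraic identity together with elementary facts about self-adjoint operators. The identity is
$$\langle Sx, x\rangle \;=\; \langle \theta^*\theta x,\, x\rangle \;=\; \langle \theta x,\, \theta x\rangle \;=\; \sum_{i\in I}|\langle x, f_i\rangle|^2, \qquad x\in\Hn,$$
which is immediate from $S = \theta^*\theta$ and the definition of $\theta$. Two things fall out at once: $S$ is self-adjoint (being $\theta^*\theta$), and $S$ is positive semidefinite, since its quadratic form is a sum of nonnegative terms. Consequently the frame inequalities \eqref{frameDefn} say exactly that $A\|x\|^2 \le \langle Sx,x\rangle \le B\|x\|^2$ for all $x\in\Hn$, i.e.\ that $A I_n \preceq S \preceq B I_n$ in the Loewner order.

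For part (1), I would first note that in finite dimensions the upper bound is automatic: by Cauchy--Schwarz, $\sum_{i\in I}|\langle x,f_i\rangle|^2 \le \big(\sum_{i\in I}\|f_i\|^2\big)\|x\|^2$, so one may always take $B = \sum_{i\in I}\|f_i\|^2$. Hence $\fI$ is a frame if and only if there is some $A>0$ with $\langle Sx,x\rangle \ge A\|x\|^2$ for all $x$. Diagonalizing the self-adjoint operator $S$ by the spectral theorem, the optimal bounds are $A = \lambda_{\min}(S)$ and $B = \lambda_{\max}(S) = \|S\|$, so such an $A$ exists if and only if every eigenvalue of $S$ is strictly positive; since $S\succeq 0$ always, this is equivalent to $0$ not being an eigenvalue of $S$, i.e.\ to $S$ being invertible. (One can also argue without the spectral theorem: if $Sx_0 = 0$ for some $x_0 \ne 0$, the identity forces $\langle x_0, f_i\rangle = 0$ for all $i$, contradicting any lower bound; conversely, if $S$ is invertible then $\langle Sx, x\rangle \ge \|S^{-1}\|^{-1}\|x\|^2$.) This argument also explains the parenthetical ``positive'' in the statement: $S$ is positive semidefinite for every sequence $\fI$, so invertibility is the whole content.

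For part (2), the ``if'' direction is immediate: substituting $S = \lambda I_n$ into the identity gives $\sum_{i\in I}|\langle x,f_i\rangle|^2 = \lambda\|x\|^2$, which is the $\lambda$-tight frame condition (here $\lambda>0$, since $S\succeq 0$ and $\lambda=0$ would make $S$ non-invertible, hence $\fI$ not a frame). For ``only if'', $\lambda$-tightness says $\langle (S-\lambda I_n)x, x\rangle = 0$ for all $x\in\Hn$, and since $S - \lambda I_n$ is self-adjoint, a self-adjoint operator whose quadratic form vanishes identically must be the zero operator; hence $S = \lambda I_n$. Part (3) is the special case $\lambda = 1$. The only genuinely nontrivial ingredient is this last fact --- that a self-adjoint operator is determined by its quadratic form --- which over $\C$ is the polarization identity and over $\R$ follows from the spectral theorem (or from evaluating the quadratic form on the vectors $e_j$ and $e_j+e_k$); everything else is bookkeeping, the one structural observation being that finite-dimensionality renders the upper frame bound automatic.
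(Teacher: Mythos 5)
This theorem is quoted in the paper from \cite{ffu} without proof, so there is no in-paper argument to compare against; your proof is correct and is the standard one that the citation points to. The reduction of all three parts to the identity $\langle Sx,x\rangle=\|\theta x\|^2=\sum_{i\in I}|\langle x,f_i\rangle|^2$, the observation that the upper frame bound is automatic for a finite sequence, and the use of the fact that a self-adjoint operator with identically vanishing quadratic form is zero (with the correct caveat that self-adjointness is genuinely needed over $\R$) are all sound.
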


\subsection{Diagram Vectors}
Any vector $f$ in $\R^2$ can be written as $f = \begin{bmatrix} f(1) \\ f(2) \end{bmatrix}$. The associated diagram vector $\fdiag$ is defined as $\fdiag = \begin{bmatrix} f^2(1)-f^2(2) \\ 2f(1)f(2) \end{bmatrix}$. Diagram vectors are used in the following characterization of tight frames.
\begin{theorem} [{{\cite[p. 124]{ffu}}}]\label{thm:diagSumR2}
Let $\fI$ be a sequence of vectors in $\R^2$, not all of which are zero. Then $\fI$ is a tight frame if and only if $\sum_{i\in I} \diag{f}_i = 0$. 
\end{theorem}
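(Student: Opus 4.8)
The plan is to unwind both conditions into explicit $2\times2$ matrix statements and compare them. First I would write the frame operator $S=\theta^*\theta=\sum_{i\in I}f_if_i^{*}$ in coordinates: for vectors in $\R^2$,
\[
S=\begin{bmatrix} \sum_{i\in I}f_i(1)^2 & \sum_{i\in I}f_i(1)f_i(2) \\[3pt] \sum_{i\in I}f_i(1)f_i(2) & \sum_{i\in I}f_i(2)^2 \end{bmatrix}.
\]
Recall the characterization that $\fI$ is a $\lambda$-tight frame if and only if $S=\lambda I_2$; since being a frame requires $S$ invertible, $\fI$ is a tight frame exactly when $S=\lambda I_2$ for some $\lambda>0$, equivalently when the three scalar identities $\sum_{i\in I}f_i(1)^2=\lambda$, $\sum_{i\in I}f_i(2)^2=\lambda$, $\sum_{i\in I}f_i(1)f_i(2)=0$ hold for some $\lambda>0$. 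On the other hand, reading off the two coordinates of $\diag{f_i}$ (whose entries are $f_i(1)^2-f_i(2)^2$ and $2f_i(1)f_i(2)$), the equation $\sum_{i\in I}\diag{f_i}=0$ says precisely that $\sum_{i\in I}f_i(1)^2=\sum_{i\in I}f_i(2)^2$ and $\sum_{i\in I}f_i(1)f_i(2)=0$ --- that is, the off-diagonal entries of $S$ vanish and its two diagonal entries agree.

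For the forward implication, if $\fI$ is tight then $S=\lambda I_2$ for some $\lambda>0$, so the off-diagonal entries vanish and the diagonal entries coincide, giving $\sum_{i\in I}\diag{f_i}=0$. For the converse, suppose $\sum_{i\in I}\diag{f_i}=0$. Then $S=\lambda I_2$ with $\lambda:=\sum_{i\in I}f_i(1)^2=\sum_{i\in I}f_i(2)^2$, and it remains only to check that $\lambda>0$. This is exactly where the hypothesis that not all $f_i$ vanish enters: $2\lambda=\sum_{i\in I}\bigl(f_i(1)^2+f_i(2)^2\bigr)=\sum_{i\in I}\|f_i\|^2>0$, so $\lambda>0$, $S=\lambda I_2$ is positive and invertible, and $\fI$ is a genuine $\lambda$-tight frame.

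I do not expect a substantive obstacle here; the proof is a direct coordinate computation combined with the frame-operator characterization of $\lambda$-tightness. The only point needing care --- and the sole reason for the nonvanishing hypothesis --- is the final step, confirming that the common diagonal value $\lambda$ is strictly positive rather than zero, since the all-zero family would vacuously satisfy $\sum_{i\in I}\diag{f_i}=0$ yet fail to be a frame at all.
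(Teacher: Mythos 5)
Your proof is correct; the paper itself only cites this result from the literature without reproducing a proof, and your argument is the standard one: identify the two coordinates of $\sum_{i\in I}\diag{f_i}$ with the conditions ``equal diagonal entries'' and ``vanishing off-diagonal entries'' of the frame operator $S=\sum_{i\in I}f_if_i^{*}$, then invoke the characterization $S=\lambda I_2$ of $\lambda$-tightness. Your observation that the hypothesis ``not all $f_i$ are zero'' is used exactly once, to guarantee $\lambda=\tfrac12\sum_{i\in I}\|f_i\|^2>0$, is precisely the right point of care.
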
 

The diagram vector of a vector in $\R^2$ belongs to $\R^2$. The diagram vectors of a tight frame in $\R^2$ can be placed tip-to-tail to demonstrate that they sum to zero. In \cite{tfs} the notion of diagram vectors was extended to $\R^n$ and $\C^n$. 

\begin{definition}
For any vector $f = \begin{bmatrix} f(1) \\ \vdots \\ f(n) \end{bmatrix}\in\R^n$, we define the diagram vector of $f$, denoted as $\fdiag$, by 
$$\fdiag = \frac{1}{\sqrt{n-1}} \begin{bmatrix} f^2(1)-f^2(2) \\ \vdots \\ f^2(n-1)-f^2(n) \\ \sqrt{2n}f(1)f(2) \\ \vdots \\ \sqrt{2n}f(n-1)f(n) \end{bmatrix} \in \R^{n(n-1)},$$
where the difference of squares $f^2(i)-f^2(j)$ and the product $f(i)f(j)$ occur exactly once for $i<j, i=1,\ldots,n-1$. 
\end{definition}

\begin{definition} 
For any vector $f \in \C^n$, we define the diagram vector of $f$, denoted as $\fdiag$, by
$$\fdiag = \frac{1}{\sqrt{n-1}} \begin{bmatrix} |f(1)|^2-|f(2)|^2 \\ \vdots\\|f(n-1)|^2-|f(n)|^2 \\ \sqrt{n} f(1)\overline{f(2)} \\ \sqrt{n} \overline{f(1)}f(2) \\ \vdots \\ \sqrt{n}  f(n-1)\overline{f(n)} \\ \sqrt{n}  \overline{f(n-1)}f(n) \end{bmatrix} \in \C^{3n(n-1)/2},$$
where the difference of the form $|f(i)|^2-|f(j)|^2$  occurs exactly once for $i<j, i=1,\ldots,n-1$, and the product of the form $f(i)\overline{f(j)}$ occurs exactly once for $i\neq j$. 

\end{definition}

Using these definitions, Theorem \ref{thm:diagSumR2} was extended to $H_n$ in \cite{tfs}.

\begin{theorem} [\cite{tfs}]
Let $\fI$ be a sequence of vectors in $H_n$, not all of which are zero. Then $\fI$ is a tight frame if and only if $\sum_{i \in I} \tilde{f}_i = 0$. Moreover, for any $f,g\in\Hn$ we have ${(n-1)}\langle \fdiag, \gdiag \rangle = n |\langle f, g \rangle|^2 - \|f\|^2 \|g\|^2$. 
\end{theorem}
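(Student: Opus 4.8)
The plan is to establish the ``moreover'' identity
\[
(n-1)\langle \diag{f}, \diag{g}\rangle = n|\langle f,g\rangle|^2 - \|f\|^2\|g\|^2
\]
by a direct expansion of the inner product of diagram vectors, and then to deduce the tight-frame characterization from it together with the frame-operator criterion ($\fI$ is $\lambda$-tight for $\Hn$ if and only if $S = \lambda I_n$).

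First I would prove the identity, treating $\R^n$ and $\C^n$ in parallel. The coordinates of $\diag{f}$ come in two blocks: the ``difference-of-squares'' coordinates $\tfrac{1}{\sqrt{n-1}}(|f(i)|^2 - |f(j)|^2)$ for $i<j$, and the ``product'' coordinates, which in the complex case are $\tfrac{\sqrt{n}}{\sqrt{n-1}} f(i)\overline{f(j)}$ over all ordered pairs $i \ne j$ (with the real analogue $\tfrac{\sqrt{2n}}{\sqrt{n-1}} f(i)f(j)$ over $i<j$). For the product block, setting $c_i := f(i)\overline{g(i)}$ and using the elementary identity $\sum_{i\ne j} c_i \overline{c_j} = \big|\sum_i c_i\big|^2 - \sum_i |c_i|^2$, the contribution is $\tfrac{n}{n-1}\big(|\langle f,g\rangle|^2 - \sum_i |f(i)|^2|g(i)|^2\big)$; the real case gives the same $\tfrac{n}{n-1}$ coefficient because $\sqrt{2n}$ together with the sum over $i<j$ produces $\tfrac{2n}{n-1}\sum_{i<j} = \tfrac{n}{n-1}\sum_{i\ne j}$. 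For the difference-of-squares block, expanding $(|f(i)|^2-|f(j)|^2)(|g(i)|^2-|g(j)|^2)$ and summing over $i<j$ — each index appearing in exactly $n-1$ pairs — gives $\tfrac{1}{n-1}\big(n\sum_i |f(i)|^2|g(i)|^2 - \|f\|^2\|g\|^2\big)$. Adding the two blocks, the terms $\sum_i |f(i)|^2|g(i)|^2$ cancel and the claimed identity drops out (in the real case one also uses $\langle f,g\rangle^2 = |\langle f,g\rangle|^2$).

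Next I would derive the characterization. Applying the identity to each pair $f_i, f_j$ and summing over $i,j\in I$,
\[
\Big\|\sum_{i\in I}\diag{f_i}\Big\|^2 = \sum_{i,j\in I}\langle \diag{f_i},\diag{f_j}\rangle = \frac{1}{n-1}\Big(n\sum_{i,j\in I}|\langle f_i,f_j\rangle|^2 - \Big(\sum_{i\in I}\|f_i\|^2\Big)^2\Big).
\]
Here $\sum_{i,j}|\langle f_i,f_j\rangle|^2 = \|G\|_F^2 = \operatorname{tr}(G^2) = \operatorname{tr}(S^2)$ by cyclicity of the trace, using $G = \theta\theta^*$ and $S = \theta^*\theta$; similarly $\sum_i\|f_i\|^2 = \operatorname{tr}(G) = \operatorname{tr}(S)$. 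Letting $\lambda_1,\dots,\lambda_n \ge 0$ be the eigenvalues of the positive semidefinite operator $S$, the right-hand side becomes $\tfrac{1}{n-1}\big(n\sum_k \lambda_k^2 - (\sum_k\lambda_k)^2\big) \ge 0$ by the Cauchy--Schwarz inequality, with equality if and only if all $\lambda_k$ coincide, i.e. $S$ is a scalar multiple of $I_n$. Hence $\sum_{i\in I}\diag{f_i} = 0$ if and only if $S = \lambda I_n$ for some $\lambda \ge 0$; since not all $f_i$ vanish we have $S \ne 0$, so $\lambda > 0$, and by the frame-operator criterion this is exactly the statement that $\fI$ is a $\lambda$-tight frame.

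I expect the only delicate point to be bookkeeping in the complex case: checking that the two product coordinates $f(i)\overline{f(j)}$ and $\overline{f(i)}f(j)$ attached to each unordered pair, together with the normalization $\sqrt{n}$ (rather than $\sqrt{2n}$ as over $\R$), assemble into precisely $|\langle f,g\rangle|^2$, and that the extraneous diagonal terms $\sum_i |f(i)|^2|g(i)|^2$ produced there are exactly cancelled by those coming from the difference-of-squares block. Nothing beyond this is conceptually hard.
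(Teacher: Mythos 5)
Your proposal is correct and complete. Note that the paper itself offers no proof of this statement --- it is quoted verbatim from the cited reference \cite{tfs} --- so there is nothing in the text to compare against; your argument is the standard one (and, as far as the present paper is concerned, a self-contained justification of a result it merely imports). Both halves check out: the block-by-block expansion of $\langle \fdiag,\gdiag\rangle$ is right, including the bookkeeping you flag as delicate --- each index lies in exactly $n-1$ unordered pairs, so the difference-of-squares block contributes $\tfrac{1}{n-1}\bigl(n\sum_i|f(i)|^2|g(i)|^2-\|f\|^2\|g\|^2\bigr)$, and the product block's $-\tfrac{n}{n-1}\sum_i|f(i)|^2|g(i)|^2$ cancels the diagonal term exactly; the factor $\sqrt{2n}$ over unordered pairs in $\R^n$ versus $\sqrt{n}$ over ordered pairs in $\C^n$ yields the same coefficient $\tfrac{n}{n-1}$. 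The deduction of the tight-frame criterion via $\bigl\|\sum_i\fdiag_i\bigr\|^2=\tfrac{1}{n-1}\bigl(n\operatorname{tr}(S^2)-(\operatorname{tr}S)^2\bigr)$ and the Cauchy--Schwarz equality case is likewise sound, and you correctly dispose of the degenerate case $\lambda=0$ using the hypothesis that not all $f_i$ vanish.
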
 

\begin{remark} [\cite{tfs}]
From the above theorem it is immediate that if $\|f\| = 1$ then $\|\fdiag\| = 1$. Suppose $\Us^k := \{f \in \R^{k+1} : \|f\| = 1\}$ is the unit sphere in $\R^{k+1}$. We can define the diagram operator $D : S^{n-1} \rightarrow S^{n(n-1)-1}$ and note that $D$ is not injective since $f$ and $-f$ have the same diagram vectors. It can be shown that $D$ is surjective if and only if $n = 2$. 
\end{remark}

\subsection{Posets}
One of the main tools we use for studying the combinatorial structure of finite frames is partially ordered sets (or posets). We recall the definition of a poset. Let $P$ be a nonempty set. A $\emph{partial order}$ on $P$ is a relation denoted by $\leq$ satisfying the following properties:

\begin{enumerate}
\item $a \leq a$ for every $a \in P$ (reflexivity)
\item $a \leq b$ and $b \leq a$ implies $a = b$ (anti-symmetry)
\item $a \leq b$ and $b \leq c$ implies $a \leq c$ (transitivity) 
\end{enumerate} 

We may write $a \leq b$ in the equivalent form $b \geq a$. A nonempty set $P$ with a partial order is called a $\emph{partially ordered set}$ or $\emph{poset}$.

Two elements $a$ and $b$ in a poset $P$ are said to be $\emph{comparable}$ if one of them is less than or equal to the other, that is, if $a \leq b$ or $b \leq a$. A partially ordered subset in which any two elements are comparable is called a $\emph{totally ordered set}$ or $\emph{chain}$. A partially ordered subset in which no two elements are comparable is called an $\emph{antichain}$ or a $\emph{Sperner family}$.

A poset can be represented by a Hasse diagram. If the partial order on the power set of $\{1,2,3,4\}$ is given by inclusion, then the poset 
$\{\{1,2,3,4\},\{1,2\},\{3,4\},\emptyset\}$ is represented by the following Hasse diagram:
\begin{center}
\begin{tikzpicture}
    \node (top) at (0,0) {$\{1,2,3,4\}$};
    \node [below left  of=top] (left) {$\{1,2\}$};
 \node [below right of=top] (right) {$\{3,4\}$};
 \node [below right of =left] (bot) {$\{\}$};
 \draw [ shorten <=-4pt,shorten >=-4pt] (top) -- (left);
  \draw [ shorten <=-4pt,shorten >=-4pt] (top) -- (right);
  \draw [ shorten <=-4pt,shorten >=-4pt] (bot) -- (left);
  \draw [ shorten <=-4pt,shorten >=-4pt] (bot) -- (right);
\end{tikzpicture}
\end{center}

\noindent All of the posets which we will consider in this paper will be collections of sets, ordered by set inclusion.

\subsection{Prime Tight Frames}
An important concept in the study of tight frames is the notion of prime tight frames defined and studied in \cite{prime}. 

\begin{definition}[\cite{prime}]
A tight frame $\fI\sub H_n$ is said to be $\emph{prime}$ if no proper subset of the frame is a tight frame for $H_n$. If $\fI$ is not prime, then we say the frame is $\emph{divisible}$.
\end{definition}

One of the main results in \cite{prime} is the following.

\begin{theorem} [\cite{prime}]
Every tight frame of $k$ vectors in $H_n$ is a finite union of prime tight frames. 
\end{theorem}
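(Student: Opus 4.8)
The plan is to argue by strong induction on the number of vectors $k$. At the outset we may assume $F$ contains no zero vector: discarding a zero vector changes neither the spanning property nor the frame operator (by the formula $S(x)=\sum_{i\in I}\langle x,f_i\rangle f_i$), and one checks immediately that a zero vector can lie in no prime tight frame, so it is harmless to remove it first. The induction is anchored at $k=n$ and is vacuous for $k<n$: a tight frame for $\Hn$ must span, so it has at least $n$ vectors, and if it has exactly $n$ then every proper subset has fewer than $n$ vectors and hence fails to span $\Hn$, so such a frame is automatically prime. Now let $F=\fI$ be a $\lambda$-tight frame for $\Hn$ with $|I|=k>n$, and assume the statement holds for every tight frame of fewer than $k$ vectors. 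If $F$ is prime there is nothing to prove; otherwise $F$ is divisible, so by definition there is a proper subset $J\subsetneq I$ with $\fJ$ a tight frame for $\Hn$, say $\mu$-tight, and $I\setminus J\neq\emptyset$ since $J$ is proper.

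The crux is to show that the complementary family $\{f_i\}_{i\in I\setminus J}$ is again a tight frame for $\Hn$. Writing $S_K$ for the frame operator of $\{f_i\}_{i\in K}$, the formula $S_K(x)=\sum_{i\in K}\langle x,f_i\rangle f_i$ shows that $K\mapsto S_K$ is additive, so $S_{I\setminus J}=S_I-S_J$. By the equivalence ``$\{f_i\}_{i\in K}$ is $\nu$-tight if and only if $S_K=\nu I_n$'', this gives $S_{I\setminus J}=\lambda I_n-\mu I_n=(\lambda-\mu)I_n$. Taking traces, $n(\lambda-\mu)=\operatorname{tr}(S_{I\setminus J})=\sum_{i\in I\setminus J}\|f_i\|^2>0$, the strict inequality holding because $I\setminus J$ is nonempty and $F$ has no zero vectors. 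Hence $\lambda-\mu>0$, so $S_{I\setminus J}=(\lambda-\mu)I_n$ is an invertible positive multiple of the identity, and therefore $\{f_i\}_{i\in I\setminus J}$ is a $(\lambda-\mu)$-tight frame for $\Hn$.

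Since $\fJ$ and $\{f_i\}_{i\in I\setminus J}$ are tight frames for $\Hn$ with strictly fewer than $k$ vectors each (they partition the $k$ indices into two nonempty blocks), the induction hypothesis expresses each as a finite union of prime tight frames; their union is $F$, which is then a finite union of prime tight frames, completing the induction. I expect the main obstacle to be precisely the crux step: the real content of the theorem is that excising a tight subframe from a tight frame leaves behind another \emph{tight} frame for $\Hn$ rather than a deficient, non-spanning remnant, and it is the additivity of the frame operator together with the positive-semidefiniteness/trace argument that makes this go through. A secondary point that needs attention is the termination of the recursion, which holds because every tight frame for $\Hn$ has at least $n$ vectors so the vector count drops strictly at each split, together with the bookkeeping of zero vectors that is cleared away at the start.
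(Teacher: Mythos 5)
The paper states this result without proof, merely citing \cite{prime}, so there is no internal proof to compare against; your argument is correct and is essentially the standard one. The crux you identify is the right one: if $\fJ$ is a $\mu$-tight subframe of a $\lambda$-tight frame $\fI$, then additivity of the frame operator gives $S_{I\setminus J}=(\lambda-\mu)I_n$, and the trace computation $n(\lambda-\mu)=\sum_{i\in I\setminus J}\|f_i\|^2>0$ shows the complementary family is again a tight frame for $\Hn$; the strong induction anchored at $k=n$ then terminates because every tight frame for $\Hn$ has at least $n$ vectors. One caveat: your treatment of zero vectors is slightly too quick. As you yourself observe, no prime tight frame can contain a zero vector, so a tight frame that \emph{does} contain one cannot literally be written as a union of prime tight frames; discarding the zero vectors therefore does not make the statement ``harmless'' for such frames, but rather shows that the theorem must be read under the standing convention (adopted explicitly at the start of Section 3 of this paper) that frames contain no zero vectors. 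Under that convention your proof is complete and correct.
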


\begin{definition}
Let $F$ be a tight frame. Then for some $\ell \in \N$, $$F = F_1 \cup F_2 \cup \cdots \cup F_{\ell},$$ where $F_j$ is a prime tight frame for $j = 1,2,\ldots,\ell$. We say that the $F_j$ are $\emph{prime factors}$ or $\emph{prime divisors}$ of $F$. 
\end{definition}


\section{Factor posets}

In this section we study the decompositions of frames, particularly tight frames, into tight subframes. We begin by defining a poset structure which describes such a decomposition. \emph{For simplicity in the remainder of this section we only consider frames which contain no zero vectors.}

\begin{definition}
Let $F=\fI\sub H_n$ be a frame. We define its factor poset $\fpF\sub 2^I$ to be the set
$$\fpF = \{J\sub I:\fJ \text{ is a tight frame}\}$$
partially ordered by set inclusion. We assume $\emptyset\in \fpF$.
\end{definition}

\begin{definition}
For a frame $F=\fI\sub H_n$ and its factor poset $\fpF$, we define the \emph{empty cover of $\fpF$}, $EC(\fpF)$, to be the set of $J\in \fpF$ which cover $\emptyset$, that is,
$$EC(\fpF) = \{J\in \fpF: J\neq \emptyset \text{ and} \not\exists J'\in \fpF \text{ with }\emptyset \subsetneq J'\subsetneq J\}.$$
\end{definition}

\begin{example}
Let $F=\{e_1,e_2,-e_1,-e_2\}=\{f_1,f_2,f_3,f_4\}\sub \R^2$. Then
$$\fpF = \{\emptyset,\{1,2\},\{2,3\},\{3,4\},\{4,1\},\{1,2,3,4\}\}.$$
This follows from finding the diagram vectors:
$$\diag{f}_1 = \diag{f}_3 = e_1\text{ and }\diag{f}_2=\diag{f}_4 = -e_1.$$
Here we note the correspondence between tight subframes and collections of diagram vectors which sum to zero. Because we consider frames without zero vectors, this is a one-to-one correspondence. Here we also compute $EC(\fpF) =\{\{1,2\},\{2,3\},\{3,4\},\{4,1\}\}$.
\end{example}

\subsection{Inverse Factor Poset Problem}

In this section we consider the \emph{inverse factor poset problem}, namely, the problem of determining for which posets $P\sub 2^I$ there exists a frame in $H_n$ whose factor poset is $P$. Given the correspondence between tight subframes and collections of diagram vectors summing to zero, we begin by considering the well-studied \emph{subset-sum problem}, which for a finite set $A\sub\R$ inquires whether there exists a subset whose sum is zero (or some fixed number). This problem has implications in computer science, number theory, and beyond, and is a classical example of an \textsf{NP}-complete problem \cite{algs}. 

In the context of the subset-sum problem for a set $A$, a natural poset to consider is
$$\subsetsum{A}:=\left\{B\sub A: \sum_{b\in B} b = 0\right\}.$$
In the following theorem we prove that solving the inverse factor poset problem for frames in $\R^2$ is equivalent to determining whether a poset corresponds to $\subsetsum{A}$ for some set $A\sub\R$. We begin by stating a result from \cite{2012report}. We include a proof here for convenience of exposition.

\begin{lemma}[{{\cite[Theorem 3.1]{2012report}}}]\label{lemma:scaledONBnEquals2}
Suppose $\F = \f\sub\R^2$ is a collection of vectors. Then there exists a corresponding collection of vectors $\G=\g\sub\R^2$ with $g_i \in\Span\{v_1\}\cup \Span\{v_2\}$ for all $i\in I$ so that $\fpos{\F}=\fpos{\G}$, where $\{v_1,v_2\}$ is any orthonormal basis for $\R^2$.
\end{lemma}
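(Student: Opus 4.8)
The plan is to use the diagram-vector characterization of tight frames in $\R^2$ (Theorem \ref{thm:diagSumR2}) to reduce the problem to a statement about the one-dimensional quantities $\diag{f_i}$. First I would fix the orthonormal basis $\{v_1,v_2\}$; by rotating coordinates (which preserves tightness and hence the factor poset, since an orthogonal change of coordinates acts as a bijection on tight subframes) I may assume $\{v_1,v_2\}=\{e_1,e_2\}$ is the standard basis. The key observation is that for $f\in\R^2$, the diagram vector $\diag{f}=\begin{bmatrix} f^2(1)-f^2(2) \\ 2f(1)f(2)\end{bmatrix}$ has second coordinate zero precisely when $f$ is a scalar multiple of $e_1$ or of $e_2$, and in that case $\diag{f} = \pm\|f\|^2 e_1$; more importantly, $\|\diag{f}\| = \|f\|^2$ always, and $\diag{(cf)} = c^2\,\diag{f}$.

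Next, for each $i$ I would define a scalar $a_i$ depending only on the direction of $f_i$: writing $f_i$ in polar form as $f_i = r_i(\cos\theta_i, \sin\theta_i)$, we have $\diag{f_i} = r_i^2(\cos 2\theta_i, \sin 2\theta_i)$, so the diagram vectors are genuinely planar vectors with angles $2\theta_i$ and lengths $r_i^2$. I want to produce $g_i$ lying on $\Span\{e_1\}$ or $\Span\{e_2\}$ whose diagram vectors are positive scalar multiples of the $\diag{f_i}$ — but $\diag{g_i}$ for $g_i$ on an axis is always a multiple of $e_1$, so this is too restrictive unless all the $\diag{f_i}$ are already collinear. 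The correct move is subtler: I would partition $I$ according to the angle $2\theta_i \bmod \pi$ (i.e.\ group together indices whose diagram vectors point along a common line through the origin), and replace each $f_i$ in a given group by a vector $g_i$ on $\Span\{e_1\}$ if $\diag{f_i}$ points in the $+e_1$ direction within that group's line and on $\Span\{e_2\}$ if it points in the $-e_1$ direction — after first applying a group-dependent rotation. The point is that a subset $J$ of a single group is tight iff its diagram vectors sum to zero iff the signed lengths balance, and this combinatorial condition is preserved if we send each such diagram vector to $\pm e_1$ with length $r_i^2$ (i.e.\ $g_i = r_i e_1$ or $r_i e_2$). For a subset $J$ hitting several distinct groups (distinct lines), $\sum_{j\in J}\diag{f_j}=0$ forces the partial sum within each group to vanish separately, since vectors on distinct lines through the origin are linearly independent in the relevant sense — more precisely, a sum of vectors, each lying on one of several fixed lines through the origin, is zero iff the sub-sum along each line is zero. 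Hence tightness of $\fJ$ is equivalent to tightness of each $\{f_j : j\in J, \ j \text{ in group } \ell\}$, and the same equivalence holds for the $g_j$ by construction; therefore $\fpos{\F} = \fpos{\G}$.

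So the key steps in order are: (1) reduce to the standard basis by orthogonal invariance of the factor poset; (2) translate everything to diagram vectors via Theorem \ref{thm:diagSumR2}, recording $\|\diag{f_i}\|=\|f_i\|^2$ and the angle-doubling; (3) partition $I$ into groups by the line spanned by $\diag{f_i}$; (4) show $\sum_{j\in J}\diag{f_j}=0$ iff the partial sum over each group is zero, so the factor poset is determined group-by-group; (5) within each group, replace $f_i$ by $r_i e_1$ or $r_i e_2$ according to the sign of $\diag{f_i}$ along the group's line, check that $\diag{(r_ie_1)}=r_i^2 e_1$ and $\diag{(r_ie_2)}=-r_i^2 e_1$ so the signed-length balance condition is exactly preserved; (6) conclude $\fpos{\F}=\fpos{\G}$.

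The main obstacle I anticipate is step (4)/(5): one must be careful that "replacing within a group" does not accidentally create new tight subsets spanning several groups. The safeguard is that all $g_i$ end up on the two coordinate axes, so every $\diag{g_i}$ is $\pm r_i^2 e_1$ — all collinear — which means $\sum_{j\in J}\diag{g_j}=0$ is a single scalar equation; I must verify this scalar equation decomposes along the original groups in exactly the same way the vector equation $\sum\diag{f_j}=0$ did, i.e.\ that the grouping of signs is consistent between the $f$-picture and the $g$-picture. This is a bookkeeping argument: assign to group $\ell$ a sign $\varepsilon_\ell \in \{+1,-1\}$ and put $g_i$ on the $e_1$-axis when $\diag{f_i}$ is the $+\varepsilon_\ell$ end of its line and on the $e_2$-axis otherwise; then the $g$-balance equation, read group by group, reproduces the $f$-balance equations, and one must check no cross-group cancellation is possible because the $f$-balance already forbids it. Handling the degenerate case where some $f_i$ already lies on an axis (so its diagram vector is $\pm e_1$ and it may be put in the "horizontal" group directly) is a minor special case of the same argument.
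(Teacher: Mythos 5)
Your reduction breaks at step (4). The claim that ``a sum of vectors, each lying on one of several fixed lines through the origin, is zero iff the sub-sum along each line is zero'' is only true in $\R^2$ when at most two distinct lines are involved; three or more distinct lines through the origin in the plane are linearly dependent as directions, so cross-group cancellation is entirely possible. The Mercedes--Benz frame $f_k=(\cos(2\pi k/3),\sin(2\pi k/3))$, $k=1,2,3$, is a concrete counterexample: its diagram vectors are the unit vectors at angles $0$, $2\pi/3$, $4\pi/3$, which lie on three distinct lines, sum to zero, and have no proper nonempty subset summing to zero. Your construction puts each $f_k$ in its own singleton group and replaces it by a unit vector on an axis, so $\sum_{k}\diag{g}_k=\pm1\pm1\pm1\neq0$ for every sign assignment; hence $\{1,2,3\}\in\fpos{\F}$ but $\{1,2,3\}\notin\fpos{\G}$, and the factor poset is not preserved. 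The safeguard you describe in your final paragraph (``the $f$-balance already forbids'' cross-group cancellation) is exactly the false assertion. A related symptom: your $g_i$ always has $\|g_i\|=\|f_i\|$, but in the example above any correct $\G$ on the axes must change the norms (e.g.\ $\{\sqrt{2}e_1,e_2,e_2\}$ works), so no length-preserving axis replacement can succeed.

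The paper avoids grouping altogether. It chooses a \emph{generic} linear functional $T(x,y)=\alpha x+\beta y$ on the plane of diagram vectors, with $(\alpha,\beta)$ taken outside the finite union of lines $\{(\alpha,\beta):T(\sum_{j\in J}\diag{f}_j)=0\}$ over all $J$ whose diagram sum is nonzero. Then $T(\sum_{j\in J}\diag{f}_j)=0$ if and only if $\sum_{j\in J}\diag{f}_j=0$, for \emph{every} $J\sub I$ simultaneously, and one sets $g_i=\sqrt{T(\diag{f_i})}\,e_1$ or $\sqrt{|T(\diag{f_i})|}\,e_2$ according to the sign of $T(\diag{f_i})$, so that $\diag{g_i}=(T(\diag{f_i}),0)$. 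Subset sums of the $\diag{g_i}$ then reproduce the scalar quantities $T(\sum_{j\in J}\diag{f}_j)$ exactly, which is what your group-by-group bookkeeping cannot capture. If you want to salvage your approach, you essentially need to replace the partition into lines by such a generic projection to $\R$; the partition idea cannot be repaired.
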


\begin{proof}
By the invariance of factor poset structure under rotations we take without loss of generality $v_1=e_1$ and $v_2=e_2$, where $e_1=(1,0)$ and $e_2=(0,1)$. We now define a linear transformation $T:\R^2\to\R$ such that the following property holds:
$$(\dagger)\quad\text{ For all }J\sub I, \;T\left(\sum_{j\in J}\diag{f}_j\right)=0 \text{ if and only if }\sum_{j\in J}\diag{f}_j=0 .$$
Because these are linear transformations, it is true that $T\left(\sum_{j\in J}\diag{f}_j\right)=0$ if $\sum_{j\in J}\diag{f}_j = 0$. We now seek such a $T$ so that the converse holds. Note that all linear transformations $T$ can be written in the form $T(x,y) = \alpha x +\beta y$. Let $A = \{J\sub  I: \sum_{j\in J} \diag{f}_j\neq 0\}$. Consider
$$R = \bigcup_{J\in A} \left\{ (\alpha,\beta)\in\R^2: \alpha\left(\sum_{j\in J}\diag{f}_j(1)\right)+ \beta\left(\sum_{j\in J}\diag{f}_j(2)\right)=0\right\}.$$
Note that $R$ is a finite union of one-dimensional subspaces, and hence $R^c\sub\R^2$ is non-empty. Choose $(\alpha,\beta)\in R^c$. The corresponding linear transformation $T:\R^2\to\R$ with
$$T(x,y) = \alpha x+\beta y$$
then satisfies the desired property $(\dagger)$. Let 
\begin{align*}
S_p &= \{i\in I: T(\diag{f_i})>0\}\\
S_z &= \{i\in I: T(\diag{f_i})=0\}\\
S_n &= \{i\in I: T(\diag{f_i})<0\}.
\end{align*}
For each $i\in I$, we define $g_i$ as follows:
\begin{enumerate}
\item If $i\in S_p$, let $g_i = \sqrt{T(\diag{f_i})}e_1$. Then $\diag{g_i} = (T(\diag{f_i}),0)$.
\item If $i\in S_z$, let $g_i = 0$. Then $\diag{g_i} = (0,0)$. 
\item If $i\in S_n$, let $g_i = \sqrt{|T(\diag{f_i})|}e_2$. Then $\diag{g_i} = ( T(\diag{f_i}),0)$.
\end{enumerate}
Then for $J\sub I$ we have that $\sum_{j\in J}\diag{g_j} = 0$ if and only if $\sum_{j\in J} T(\diag{f_j})= T\left(\sum_{j\in J} \diag{f_j}\right)=0$ if and only if $\sum_{j\in J}\diag{f_j} = 0$. It follows that $\fpos{\F}=\fpos{\G}$ for $\G=\g$.
\end{proof}

\begin{proposition}\label{prop:subsetsumConnectionR2}
Given any collection $\F = \f\sub\R^2$ there exists a set $A\sub\R$ with $|A|=|I|$ so that $\subsetsum{A}=\fpF$. Conversely, given a set $A\sub\R$ there exists a collection of vectors $\F=\f\sub\R^2$ with $|A|=|I|$ so that $\subsetsum{A}=\fpF$. Further, this correspondence can be taken so that nonzero vectors in $\R^2$ correspond to nonzero numbers in $\R$, and vice versa.
\end{proposition}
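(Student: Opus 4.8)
The plan is to reduce both directions to the elementary observation that a collection of vectors lying on the two coordinate axes of $\R^2$ is a tight frame precisely when the associated signed lengths sum to zero, so that tight subsets correspond to zero-sum subsets of a list of reals. Lemma \ref{lemma:scaledONBnEquals2} --- or rather its proof --- is exactly the tool that reduces an arbitrary planar collection to this normal form, so most of the work is already done and the proposition will follow with only minor additional bookkeeping.

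For the forward direction I would run the argument in the proof of Lemma \ref{lemma:scaledONBnEquals2}: choose $(\alpha,\beta)$ in the complement of the finite union of lines $R$ constructed there, so that $T(x,y)=\alpha x+\beta y$ satisfies property $(\dagger)$. Set $a_i:=T(\diag{f_i})$ and $A:=\{a_i\}_{i\in I}$. Then for nonempty $J\subseteq I$ I would chain equivalences: $J\in\fpF$ iff $\{f_j\}_{j\in J}$ is a tight frame iff (Theorem \ref{thm:diagSumR2}, using that each $f_j\neq 0$) $\sum_{j\in J}\diag{f_j}=0$ iff, by $(\dagger)$ together with linearity of $T$, $\sum_{j\in J}a_j=0$ iff $J\in\subsetsum{A}$; the empty set lies in both posets by convention. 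Since $A$ carries exactly one entry per index of $I$, $|A|=|I|$. To also secure the ``nonzero $\leftrightarrow$ nonzero'' clause here, I would note that $f_i\neq 0$ forces $\diag{f_i}\neq 0$, so $\{(\alpha,\beta):T(\diag{f_i})=0\}$ is a proper line for each $i$; picking $(\alpha,\beta)$ to avoid $R$ together with these $|I|$ additional lines makes every $a_i\neq 0$, which matches the standing hypothesis that $\F$ has no zero vector.

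For the converse, given $A=\{a_i\}_{i\in I}\subseteq\R$ with no zero entry (in keeping with the zero-free setting of this section), I would encode each scalar as a coordinate vector: $f_i=\sqrt{a_i}\,e_1$ when $a_i>0$ and $f_i=\sqrt{-a_i}\,e_2$ when $a_i<0$. A direct computation gives $\diag{f_i}=(a_i,0)$, and each $f_i$ is a nonzero vector in $\R^2$. Then for nonempty $J$, $\{f_j\}_{j\in J}$ is a tight frame iff $\sum_{j\in J}\diag{f_j}=0$ iff $\sum_{j\in J}a_j=0$, so $\subsetsum{A}=\fpF$ for $\F=\f$, with $|I|=|A|$ and nonzero scalars corresponding exactly to nonzero vectors.

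I do not expect a serious obstacle --- the real content lives in Lemma \ref{lemma:scaledONBnEquals2} --- but two bookkeeping points deserve care. First, the diagram map on $\R^2$ is two-to-one ($f$ and $-f$ share a diagram vector), so a collection containing vectors equal up to sign forces repeated $a_i$'s; hence ``$A$'' must be read as an indexed family (equivalently, a multiset) of size $|I|$ rather than as $|I|$ distinct reals, which is why the statement pairs $|A|$ with $|I|$ rather than asserting $A$ is a genuine set of that many elements. Second, in the converse a zero entry of $A$ would be forced to an honest zero vector (the only $f\in\R^2$ with $\diag f=0$), after which $\{i\}$ would be a minimal element of $\subsetsum{A}$ with no counterpart in any factor poset; the ``nonzero $\leftrightarrow$ nonzero'' clause is precisely what confines the correspondence to the zero-free regime where it is clean. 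The only genuine work beyond the cited lemma is the routine enlargement of the excluded line arrangement in the forward direction.
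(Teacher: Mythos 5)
Your proof is correct and follows essentially the same route as the paper's: the forward direction invokes the linear functional $T$ from the proof of Lemma \ref{lemma:scaledONBnEquals2} and sets $a_i = T(\diag{f_i})$, while the converse uses the same coordinate-axis encoding $f_i = \sqrt{|a_i|}\,e_1$ or $\sqrt{|a_i|}\,e_2$. The only superfluous step is your enlargement of the excluded line arrangement: since $f_i\neq 0$ implies $\diag{f_i}\neq 0$, each singleton $\{i\}$ already belongs to the set $A$ in the lemma's proof, so avoiding $R$ alone already forces every $a_i\neq 0$.
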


\begin{proof}
To prove the general claim, we may assume $\F = \f\sub\R^2\setminus\{0\}$. By Lemma \ref{lemma:scaledONBnEquals2} we may map the diagram vectors $\diagfI$ to real numbers $\{a_i\}_{i\in I}\sub\R\setminus\{0\}$ so that for all $J\sub I$ we have
$$\sum_{j\in J}\diag{f}_j = 0 \text{ if and only if }\sum_{j\in J} a_j = 0.$$
In the reverse direction, given a set $A=\{a_i\}_{i\in I}\sub\R\setminus\{0\}$ we can define $\{f_i\}_{i\in I}\sub\R^2\setminus\{0\}$ in the same inverse procedure as given in the proof of Lemma \ref{lemma:scaledONBnEquals2}. This completes the proof.
\end{proof}

\begin{remark}\label{rmk:complexC2case}
Note that the analogue of Proposition \ref{prop:subsetsumConnectionR2} holds for frames in $\C^2$ as well, now relating them to subset-sums in $\C$. Observe that subset-sum posets in $\C$ are the intersection of two subset-sum posets for real numbers (namely, taking the subset-sum poset for the real parts and intersecting it with the subset-sum poset for the imaginary parts), and that the intersection of two such posets must itself be a subset-sum structure for real numbers by the following results in Theorem \ref{thm:inverse}. Therefore, we restrict our attention for now to factor posets for frames in $\R^n$.
\end{remark}

\begin{remark}
An analogue of Lemma \ref{lemma:scaledONBnEquals2} does not exist for $n>2$. For example, when $n=3$, consider the set
$$F = \left\{e_1,e_2,e_3,\frac{1}{\sqrt{2}}(e_2+e_3),\frac{1}{\sqrt{2}}(e_2-e_3)\right\}\sub\R^3,$$
where $\{e_1,e_2,e_3\}$ is an orthonormal basis for $\R^3$. This has factor poset 
$$\fpF =\{\emptyset,\{1,2,3\},\{1,4,5\}\}.$$
We claim there is no frame consisting of multiples of some orthonormal basis in $\R^3$ whose factor poset is $\fpF$. Suppose $G=\{g_1,g_2,g_3,g_4,g_5\}$ consists entirely of scaled copies of $e_1,e_2,e_3\in \R^3$ and that $\fpos{G}=\fpF$. Without loss of generality we may take $g_1 = e_1,g_2=e_2,g_3=e_3$ since $\{g_1,g_2,g_3\}$ must be tight in $\R^3$ (the only tight frames with $3$ vectors in $\R^3$ are orthogonal bases). Since $\{g_1,g_4,g_5\}$ must be tight as well, at least one of $g_4$ and $g_5$ must be $\pm e_2$ with the other being $\pm e_3$. Taking $g_4 = e_2$ and $g_5 = e_3$ would force that we also have $\{1,3,4\}\in \fpos{G}=\fpF$, which is a contradiction. Therefore, no such frame $G$ exists. This example has an obvious extension to $\R^n$ for $n>3$.
\end{remark}

We are now prepared to fully answer the inverse factor poset problem for frames in $H_2$. This will follow from a general condition which is \emph{necessary} for a solution to the inverse problem for frames in $H_n$. In the proof we shall use the following notation and terminology.

\begin{definition}
Fix $I=\{1,2,\ldots,k\}$ and the standard (real) orthonormal basis $\{e_i\}_{i=1}^k$ for $H_k$. For $J\sub I$ define the \emph{index vector for $J$} as
$$[J] := \sum_{j\in J} e_j.$$
Given $P\sub 2^I$, define the \emph{index span of $P$}, denoted $\spandex{P}$, as
$$\spandex{P} = \Span\{[J]:J\in P\}.$$
Here we take the span over real numbers regardless of whether $H_k$ is $\R^k$ or $\C^k$.
\end{definition} 

For example, taking $I=\{1,2,3,4\}$ and $P=\{\emptyset,\{1,2\},\{3,4\}\}$ we have in the case $H_4=\C^4$ that
$$[\{1,2\}] = e_1+e_2=\vect{1\\1\\0\\0}\in\C^4\text{ and }\spandex{P} = \Span\{0,e_1+e_2,e_3+e_4\}\sub\R^4.$$
We can now state a necessary condition for the inverse factor poset problem using the index span. Given that we restrict our attention to factor posets for frames with no zero vectors (as zero vectors interfere with the subset-sum structure of diagram vectors), we consider posets $P\sub 2^I$ which contain no singletons (as singletons in a factor poset would correspond to a diagram vector of zero).

\begin{theorem}\label{thm:inverse}
Let $I=\{1,2,\ldots,k\}$ be some finite index set and $P\sub 2^I$ be a poset ordered by set inclusion and which contains no singletons. If there exists some frame $F=\fI\sub\Hnwozero$ with factor poset $P$ then $P$ is \emph{span-closed}, i.e., $[J]\notin \spandex{P}$ for every $J\in 2^I\setminus P$.
\end{theorem}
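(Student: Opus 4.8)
The plan is to argue by contradiction: suppose $F = \fI \subseteq \Hnwozero$ has factor poset $P$, and suppose some $K \in 2^I \setminus P$ satisfies $[K] \in \spandex{P}$. The key object to track is the correspondence (from Theorem on diagram vectors) between tightness of a subfamily $\fJ$ and the vanishing of $\sum_{j \in J} \diag{f}_j$. Since $F$ has no zero vectors, $J \in P$ if and only if $\sum_{j\in J}\diag{f}_j = 0$. Now write $[K] = \sum_{J \in P} \lambda_J [J]$ for real scalars $\lambda_J$. I would then apply the linear functional that reads off coordinates against the diagram vectors: for each coordinate position $m$ in the diagram-vector space, consider the linear map $I \ni i \mapsto \diag{f}_i(m)$, or more cleanly, work with the single vector-valued map $\Phi : \R^I \to \R^{N}$ (where $N = n(n-1)$ or $3n(n-1)/2$) sending $e_i \mapsto \diag{f}_i$ and extending linearly, so that $\Phi([J]) = \sum_{j\in J}\diag{f}_j$.

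The crucial step is: applying $\Phi$ to the relation $[K] = \sum_{J \in P}\lambda_J[J]$ gives $\Phi([K]) = \sum_{J\in P}\lambda_J \Phi([J]) = \sum_{J\in P}\lambda_J \cdot 0 = 0$, since each $J \in P$ is in the factor poset and hence $\Phi([J]) = \sum_{j\in J}\diag{f}_j = 0$. Therefore $\sum_{k\in K}\diag{f}_k = \Phi([K]) = 0$, which by the diagram-vector characterization of tight frames means $\fK$ is a tight frame — unless $K = \emptyset$ or all vectors $\{f_k\}_{k\in K}$ are zero. The latter is excluded since $F$ has no zero vectors, and $K = \emptyset$ is already in $P$, so $K \neq \emptyset$. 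Hence $K \in \fpF = P$, contradicting $K \in 2^I \setminus P$.

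There is one gap to close carefully, which I expect to be the main (though modest) obstacle: the diagram-vector theorem characterizing tight frames requires the subfamily to contain at least one nonzero vector, and in $\R^2$ versus general $H_n$ the diagram vector lives in different spaces — but $\Phi$ is defined uniformly on all of $I$ once $n$ is fixed, so this is not actually an issue. The real subtlety is ensuring that $K\neq\emptyset$ and that $\fK$ is genuinely nonzero: since $P$ contains no singletons and $[K]\in\spandex{P}$, if $|K|=1$ then $[K] = e_k$ would have to lie in $\spandex{P}$, and I should check whether this forces a contradiction directly or whether it is vacuously handled by the no-singleton hypothesis on $P$ (it is: a singleton $\fK = \{f_k\}$ with $f_k \neq 0$ is never a tight frame for $H_n$ when $n \geq 2$, so such $K$ is automatically outside $P$, and we would need $[K] = e_k \notin \spandex{P}$; this is exactly the content of span-closedness restricted to singletons, and it must be verified that $e_k \notin \spandex{P}$ whenever $P$ has no singletons — but in fact if $e_k \in \spandex{P}$ then by the argument above $\diag{f}_k = 0$, forcing $f_k = 0$, a contradiction). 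So the contradiction argument closes uniformly, and no separate case analysis for $|K|=1$ is needed beyond this remark.

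In summary: the whole proof is the single observation that $\Phi$ kills every index vector of $P$, hence kills everything in $\spandex{P}$, hence any $[K] \in \spandex{P}$ forces $\sum_{k \in K}\diag{f}_k = 0$ and thus $K \in P$. The only place requiring care is confirming the no-zero-vector hypothesis on $F$ together with the no-singleton hypothesis on $P$ rules out the degenerate cases where "$\sum \diag{f}_k = 0$" fails to imply "$\fK$ is a tight frame."
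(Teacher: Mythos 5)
Your argument is correct and is essentially the paper's own proof: both rest on the observation that the map sending an index vector $[J]$ to $\sum_{j\in J}\diag{f}_j$ is linear and annihilates $[J]$ for every $J\in P$, hence annihilates all of $\spandex{P}$, forcing any $K$ with $[K]\in\spandex{P}$ to satisfy $\sum_{k\in K}\diag{f}_k=0$ and therefore to lie in $P$. The paper phrases this directly (testing against an orthonormal basis of the diagram-vector space coordinate by coordinate) rather than by contradiction, but that is only a cosmetic difference; your handling of the degenerate cases (no zero vectors, no singletons, $\emptyset\in P$) matches what the theorem's hypotheses are there to guarantee.
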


\begin{proof}
Suppose there exists some frame $F=\fI\sub\Hnwozero$ with $\fpF = P$. First note that $\emptyset\in P$ since any factor poset contains the empty set as an element by definition. We may assume that $P$ contains an element other than $\emptyset$, and hence $\ell = \dim(\spandex{P})>0$. To  show that $P$ is span-closed, suppose $J\in 2^I$ with $[J]\in\spandex{P}$. We must prove that $J\in P$, i.e., $\sum_{j\in J} \diag{f}_j = 0$.  Since $[J]\in\spandex{P}$ we may write
$$[J] = \sum_{i=1}^\ell \alpha_i [J_i],$$
where $\alpha_i\in\R$, and $J_i\in P$ for all $i=1,\ldots,\ell$. Let $ \{\eta_i\}_{i=1}^{M}$ be an orthonormal basis for $\R^{n(n-1)}$ if $H_n=\R^n$ or an orthonormal basis for $\C^{3n(n-1)/2}$ if $H_n=\C^n$. To show $\sum_{j\in J} \diag{f}_j = 0$ it is enough to show that $\left\langle\sum_{j\in J}  \diag{f}_j,\eta_m\right\rangle = 0$ for all $m=1,\ldots,M$. For any such $m$ we have
\begin{align*}
\sum_{j\in J} \langle \diag{f}_j,\eta_m \rangle &= \left\langle \vect{\langle \diag{f}_1,\eta_m\rangle\\\vdots\\\langle \diag{f}_k,\eta_m \rangle },[J]\right\rangle\\
&=\left\langle \vect{\langle \diag{f}_1,\eta_m\rangle\\\vdots\\\langle \diag{f}_k,\eta_m \rangle },\sum_{i=1}^\ell \alpha_i [J_i]\right\rangle\\
&= \sum_{i=1}^\ell \alpha_i\left\langle \vect{\langle \diag{f}_1,\eta_m\rangle\\\vdots\\\langle \diag{f}_k,\eta_m \rangle }, [J_i]\right\rangle\\
&= \sum_{i=1}^\ell \sum_{j\in J_i} \langle \diag{f}_j,\eta_m\rangle\\
&= \sum_{i=1}^\ell 0 \quad\text{(since }J_i\in P)\\
&= 0.
\end{align*}
Hence $J\in P$, as was to be shown.

\end{proof}

We now prove that in $H_2$ a poset $P$ being span-closed is sufficient for the existence of a frame $F$ with factor poset $P$, whereas it is not sufficient for $H_n$ with $n>2$.

\begin{theorem}\label{thm:r2inverse}
Let $I=\{1,2,\ldots,k\}$ be some finite index set and $P\sub 2^I$ be a poset ordered by set inclusion and which contains no singletons. Then there exists some frame $F=\fI\sub H_2\setminus\{0\}$ with factor poset $P$ if and only if $P$ is \emph{span-closed}.
\end{theorem}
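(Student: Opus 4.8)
The plan is as follows. The ``only if'' direction is exactly Theorem~\ref{thm:inverse}, so only the converse needs work. By Proposition~\ref{prop:subsetsumConnectionR2} it suffices to produce nonzero reals $a_1,\dots,a_k$ with $\{J\sub I:\sum_{j\in J}a_j=0\}=P$, since the frame that proposition associates to $\{a_i\}_{i\in I}$ (built by the inverse construction in the proof of Lemma~\ref{lemma:scaledONBnEquals2}) then lies in $\R^2\setminus\{0\}$ and has factor poset $P$; and because a family of real vectors has the same frame operator whether regarded in $\R^2$ or in $\C^2$, this simultaneously settles the case $H_2=\C^2$. Writing $a=(a_1,\dots,a_k)$ and using $\sum_{j\in J}a_j=\langle a,[J]\rangle$, the goal becomes: find $a$ with no zero coordinate such that $[J]\perp a$ if and only if $J\in P$.

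I would look for $a$ in $V^\perp$, where $V:=\spandex{P}$. Since $[J]\in V$ whenever $J\in P$, any $a\in V^\perp$ automatically has $[J]\perp a$ for all $J\in P$, so the real work is to choose $a\in V^\perp$ that is \emph{not} orthogonal to $[J]$ for any $J\in 2^I\setminus P$. This is exactly where span-closedness enters: if $J\notin P$ then $[J]\notin V=\spandex{P}$, so the orthogonal projection of $[J]$ onto $V^\perp$ is a nonzero vector, and hence $W_J:=\{a\in V^\perp:\langle a,[J]\rangle=0\}$ is a proper subspace of $V^\perp$. Moreover, since $P$ contains no singletons, each $\{i\}$ lies in $2^I\setminus P$, and the constraint $a_i\neq 0$ is precisely $a\notin W_{\{i\}}$; thus the ``no zero coordinate'' requirements are already among the finitely many subspaces $W_J$. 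Because a finite union of proper subspaces of $V^\perp$ cannot exhaust $V^\perp$ unless $V^\perp=\{0\}$ — and $V^\perp=\{0\}$ would force $V=\R^k$ and hence $P=2^I$ by span-closedness, impossible as $P$ has no singletons, so the degenerate case $|I|\le1$ may be set aside — I can choose $a\in V^\perp\setminus\bigcup_{J\in 2^I\setminus P}W_J$. Such an $a$ has all coordinates nonzero and satisfies $[J]\perp a$ exactly for $J\in P$.

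The last point to verify is that the resulting $F$ is genuinely a frame, that is, spans $H_2$. If $P$ contains some nonempty $J_0$, then $|J_0|\ge2$ and, since $J_0$ is in the realized poset, $\{f_j\}_{j\in J_0}$ is a tight frame for $H_2$ by Theorem~\ref{thm:diagSumR2}, hence spans, so $F$ spans. If instead $P=\{\emptyset\}$, then $V^\perp=\R^k$ and I would additionally take $a$ to have entries of both signs (still possible while avoiding the finitely many $W_{\{i\}}$), which makes $F$ contain positive multiples of both $e_1$ and $e_2$ and so span $\R^2$. I do not expect a real obstacle here: the crux is simply the observation that span-closedness is exactly the property that lets the subspace $V^\perp$ separate the members of $P$ from the non-members, and everything after that is a routine general-position selection together with the minor bookkeeping above (ensuring $F$ spans, and discarding the degenerate case $|I|\le1$).
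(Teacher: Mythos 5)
Your proposal is correct and follows essentially the same route as the paper's proof: reduce to the subset-sum formulation via Proposition~\ref{prop:subsetsumConnectionR2}, then pick a vector in $\spandex{P}^\perp$ avoiding the finitely many proper subspaces cut out by the $[J]$ with $J\notin P$ (properness being exactly where span-closedness is used, and the no-singletons hypothesis handling the nonzero-coordinate requirement). Your added checks that the resulting $F$ actually spans $H_2$ and your direct frame-operator argument for $\C^2$ are small refinements the paper leaves implicit, but the core argument is identical.
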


\begin{proof}
The forward direction is the content of Theorem \ref{thm:inverse}. We now prove with the reverse direction. Here we restrict our attention to $H_2=\R^2$ because the case for $H_2=\C^2$ is an obvious extension given Remark \ref{rmk:complexC2case}.

By Proposition \ref{prop:subsetsumConnectionR2} it suffices to show the existence of a vector $\mathbf{a}\in\R^k$ with no zero components so that
$$\langle \mathbf{a},[J]\rangle = 0\text{ if and only if } J\in P.$$
Note that $K=\spandex{P}$ has $\dim(K)<k$ because $P$ contains no singletons and hence $e_i\notin K$ for $i=1,\ldots,k$. Therefore, $K^\perp$ is a (linear) subspace of positive dimension. Write every vector $[J]$ for $J\in 2^I\setminus P$ uniquely as
$$[J] =[J]_{\|}+[J]_{\perp},$$
where $[J]_{\|}\in K$ and $[J]_\perp\in K^\perp$. We seek some $\aaa\in K^\perp$ so that $\langle \aaa,[J]\rangle \neq 0$ for all $J\in 2^I\setminus P$. For any $\aaa\in K^\perp$ and $J\in2^I\setminus P$ we have
$$\langle \aaa,[J]\rangle = \langle \aaa,[J]_\perp\rangle.$$
Note that the subspaces of the form $\{\aaa\in K^\perp:\langle \aaa,[J]_\perp\rangle=0\}$ are of codimension 1 in the ambient space $K^\perp$. Hence, the set
$$K^\perp\setminus \left(\bigcup_{J\in 2^I\setminus P}\{\aaa\in K^\perp:\langle \aaa,[J]\rangle=0\}\right)\neq\emptyset.$$
Hence, such an $\aaa =(a_1,\ldots,a_k)$ exists with $\sum_{j\in J} a_j= 0$ if and only if $J\in P$. Using the subset-sum reformulation of factor poset structure for frames in $\R^2$ from Proposition \ref{prop:subsetsumConnectionR2}, the proof is complete.
\end{proof}

\begin{example}
Let $P=\{\emptyset,\{1,2\},\{1,3\},\{1,4\},\{2,3,4\}\}\sub 2^{\{1,2,3,4\}}$. Note that $P$ cannot be the factor poset of any frame in $\R^2$ because $\{1,2\},\{1,3\},\{1,4\}$ imply that corresponding diagram vectors satisfy $\diag{f}_2=\diag{f}_3=\diag{f}_4$. However, one can readily verify that the conditions of Theorem \ref{thm:r2inverse} fail because $\dim(\spandex{P}) = 4 \not< k=4$, and hence $P$ could be span-closed only if $P = 2^{\{1,2,3,4\}}$.
\end{example}

Observe that Theorem \ref{thm:r2inverse} also determines for a given poset $P\sub 2^I$ the smallest possible factor poset which contains $P$. Namely, take $P$ and append any subsets $J\sub I$ for which $[J]\in \spandex{P}$. Call the new poset $P'$. This will necessarily make $P'$ span-closed. Note, however, that $P'$ may now contain singletons. As singletons correspond to zero vectors, $P'$ will not technically be a factor poset and is only the subset-sum poset for diagram vectors (which is not the same as the factor poset for the same frame when zero vectors are present), but the correspondence to an actual factor poset in this case is clear. In this sense the previous result determines precisely the deficiencies of a poset.

Let us also remark that Theorem \ref{thm:inverse} is not sufficient for $n>2$. For example, the factor poset $\{\emptyset,\{1,2\}\}$ for $\{e_1,e_2\}\sub\R^2$ certainly has no $\R^3$ inverse. Simple examples for $\R^n$ which do not violate such an obvious condition that every non-empty element of $P$ has size at least $n$ exist. Here is one such example:

\begin{example}
Let $P$ be the factor poset corresponding to the frame
$$F = \{f_i\}_{i=1}^9=\{e_1,e_1,e_1,e_1,e_1,e_1,\sqrt{2}e_2,\sqrt{2}e_2,\sqrt{2}e_2\}\sub\R^2,$$
which is a tight frame. Suppose we wish to construct a frame $\{g_i\}_{i=1}^9\sub\R^3$ with $P$ as its factor poset. Let us observe that
$$\{1,2,7\},\{1,3,7\},\{2,3,7\}\in P.$$
Because the only tight frames with $3$ vectors in $\R^3$ are orthogonal bases, we may assume without loss of generality that $g_1 =e_1$, $g_2=e_2$, and $g_7 = e_3$. Since $\{1,3,7\}\in P$, we have that $g_3 = \pm g_2$. Hence, $\dim(\Span\{g_2,g_3,g_7\}) = 2$, hence $\{2,3,7\}\notin P$, a contradiction. Therefore no frame in $\R^3$ has $P$ as its factor poset.
\end{example}

\begin{obs}\label{obs:rowsGiveDiagVecs}
We now revisit the inverse construction from subset-sum structures for real numbers to frames in $\R^2$ given in Proposition \ref{prop:subsetsumConnectionR2}. The construction involves turning a vector $\mb{v} =(v_1,\ldots,v_k)\in\Real^k$ into a collection of diagram vectors $$\colvec{2}{v_1}{0} ,\cdots, \colvec{2}{v_k}{0}.$$ 
As long as $\mb{v}\in K^\perp$, where $K=\spandex{P}$, the poset for this collection will contain the original poset $P$.  Moreover, as long as $\mb{v}$ does not lie in a \emph{forbidden hyperplane} $\h_J :=\{\aaa\in K^\perp:\langle \aaa,[J]\rangle=0\}$ for any $J \notin P$, the poset for this collection will not contain any subsets not in $P$.  Thus by choosing $\mb{v} \in K^\perp \setminus \bigcup_{J \notin P} \h_J$, we have a frame for the given poset. Because the forbidden hyperplanes have measure $0$ relative to $K^\perp$, this can be computed by picking any generic vector from $K^\perp$.

This construction can be extended to a more general construction.  If $\mb{v},\mb{w}\in\Real^k$, then we can turn them into a collection of $k$ diagram vectors $$\colvec{2}{v_1}{w_1}, \cdots, \colvec{2}{v_k}{w_k}.$$ 
We inquire about what conditions on $\mb{v}$ and $\mb{w}$ make this collection of diagram vectors correspond to a frame with factor poset $P$.  For any element in $P$, the sum of the corresponding diagram vectors should be $0$.  However, because the components of a vector sum ``independently,'' this simply means that for any element in $P$, the sum of the corresponding coordinates in $\mb{v}$ should be 0, and the same for $\mb{w}$.  Thus the poset for our collection of diagram vectors contains $P$ if and only if $\mb{v},\mb{w}\in K^\perp$.  

Conversely, we must inquire what conditions on $\mb{v}$ and $\mb{w}$ makes this collection's factor poset not have any subsets not in $P$.  For any $J \notin P$, we have that at least one of $\sum_{j\in J}v_j$ or $\sum_{j\in J} w_j$ will be nonzero. Thus $J$ is in the factor poset for the given collection if and only if \emph{both} $\mb{v},\mb{w}\in \h_J$. Therefore, to construct a general frame in $\R^2$ for $P$, choose two points $\mb{v},\mb{w} \in K^\perp$ such that no single forbidden hyperplane $\h_J$ for $J\notin P$ contains both $\mb{v}$ and $\mb{w}$. Every $\R^2$ frame having $P$ as its factor poset can be constructed this way.
\end{obs}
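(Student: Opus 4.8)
The plan is to reduce the entire statement to the subset-sum description of tight subframes together with one elementary fact about the diagram map on $\R^2$. Since we only consider frames with no zero vectors, Theorem~\ref{thm:diagSumR2} gives, for every nonempty $J\subseteq I$, that $\{f_j\}_{j\in J}$ is a tight frame if and only if $\sum_{j\in J}\diag{f}_j=0$; hence the factor poset of a frame $F\subseteq\R^2\setminus\{0\}$ is literally the subset-sum poset of the multiset $\{\diag{f_i}\}_{i=1}^k$ of its diagram vectors. The one fact I would record first is that the diagram map $f\mapsto\diag{f}=(f(1)^2-f(2)^2,\,2f(1)f(2))$ is a surjection $\R^2\to\R^2$: in polar form it is $r(\cos\theta,\sin\theta)\mapsto r^2(\cos 2\theta,\sin 2\theta)$, and moreover $\diag{f}=0$ iff $f=0$, the two preimages of a nonzero point being $\pm f$. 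Consequently, given any $\mb{v},\mb{w}\in\R^k$ there is a collection $\{g_i\}_{i=1}^k\subseteq\R^2$ with $\diag{g_i}=(v_i,w_i)$ for all $i$, unique up to replacing each $g_i$ by $-g_i$, and $g_i=0$ exactly when $(v_i,w_i)=(0,0)$.

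The core computation is then immediate. For $J\subseteq I$,
$$\sum_{j\in J}\diag{g_j}=\left(\sum_{j\in J}v_j,\ \sum_{j\in J}w_j\right)=\left(\langle\mb{v},[J]\rangle,\ \langle\mb{w},[J]\rangle\right),$$
so this vanishes if and only if $[J]\perp\mb{v}$ and $[J]\perp\mb{w}$; that is, $J$ belongs to the factor poset of $\{g_i\}$ exactly when $\mb{v},\mb{w}\in\{[J]\}^{\perp}$. Writing $K=\spandex{P}$ and, for $J\notin P$, $\h_J=\{\aaa\in K^\perp:\langle\aaa,[J]\rangle=0\}$, this yields both halves of the claim: the factor poset contains $P$ iff $[J]\perp\mb{v}$ and $[J]\perp\mb{w}$ for every $J\in P$, i.e.\ iff $\mb{v},\mb{w}\in K^\perp$; and, assuming that, the factor poset contains no $J\notin P$ iff for every such $J$ at least one of $\mb v,\mb w$ lies outside $\h_J$, i.e.\ iff no single forbidden hyperplane $\h_J$ contains both $\mb v$ and $\mb w$. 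Specializing to $\mb w=0$ (diagram vectors with vanishing second coordinate) collapses the condition ``$[J]\perp\mb v$ and $[J]\perp\mb w$'' to $\mb v\in\h_J$, recovering the single-vector construction; there, choosing $\mb v\in K^\perp\setminus\bigcup_{J\notin P}\h_J$ produces a collection whose factor poset is exactly $P$, and this collection has no zero vector because $P$ has no singleton (if $(v_i,w_i)=0$ then $\{i\}$ would join the factor poset). That such a $\mb v$ exists---equivalently, that each $\h_J$ is a proper subspace of $K^\perp$ and their finite union is not all of $K^\perp$---is precisely the reverse implication of Theorem~\ref{thm:r2inverse} and requires $P$ span-closed; the ``generic vector'' remark then follows from each $\h_J$ being a hyperplane, hence of measure zero in $K^\perp$.

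It remains to check that \emph{every} frame $F=\{f_i\}_{i=1}^k\subseteq\R^2\setminus\{0\}$ with $\fpF=P$ is obtained this way. Take $(v_i,w_i):=\diag{f_i}$ and $\mb v=(v_1,\dots,v_k)$, $\mb w=(w_1,\dots,w_k)$. The collection of diagram vectors the construction attaches to $(\mb v,\mb w)$ is then exactly $\{\diag{f_i}\}_{i=1}^k$, so choosing the correct sign preimage for each index returns $F$ itself; and the computation above shows $\mb v,\mb w\in K^\perp$ with no $\h_J$ ($J\notin P$) containing both, so $(\mb v,\mb w)$ is an admissible input. Hence $F$ lies in the family produced by the construction.

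I do not expect a genuine obstacle here: the argument is the subset-sum reformulation plus the observation that the two coordinates of a diagram vector in $\R^2$ add independently. The only points needing care are that the vectors produced are nonzero (forced by the no-singleton hypothesis on $P$), that ``constructed this way'' is understood up to the harmless sign ambiguity in inverting the diagram map, and that the sole existence statement---a generic $\mb v\in K^\perp$ avoiding the forbidden hyperplanes---is imported from Theorem~\ref{thm:r2inverse} and so implicitly presumes $P$ span-closed.
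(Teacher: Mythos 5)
Your argument is correct and follows essentially the same route as the paper's own inline justification of this observation: reduce tightness of subframes to the subset-sum condition on diagram vectors, note that the two coordinates of a diagram-vector sum vanish independently, and read off the two conditions $\mb{v},\mb{w}\in K^\perp$ and ``no single $\h_J$ contains both.'' The only material you add is the explicit surjectivity of the $\R^2$ diagram map (angle-doubling in polar form) together with the $\pm$ sign ambiguity and the nonzero-vector check, which the paper leaves implicit (citing surjectivity only in a remark); these are welcome detail-filling rather than a different approach.
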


\begin{example}
Let $I=\{1,2,3\}$ and $P=\{\emptyset,\{1,2\}\}$. Then
$$2^I\setminus P =\{\{1\},\{2\},\{3\},\{1,3\},\{2,3\},\{1,2,3\}\}.$$
It is easy to check that $K =\spandex{P} =\left \{\vect{a\\a\\0}:a\in\R\right\}$ and $K^\perp = \Span\left\{\vect{0\\0\\1},\vect{1\\-1\\0}\right\}$, a plane in $\R^3$. Clearly $\dim(K)=1<3$. The forbidden hyperplanes are $\hb{1}=\hb{2} = \set{\vect{0\\0\\a}:a\in\R}$, $\hb{3} =\hb{1,2,3} = \set{\vect{a\\-a\\0}:a\in\R}$, $\hb{1,3}=\set{\vect{a\\-a\\-a}:a\in\R}$, and $\hb{2,3} = \set{\vect{a\\-a\\a}:a\in\R}$.
\begin{enumerate}
\item If we select $\mb{v} = \vect{2\\-2\\1}\in K^\perp\setminus \bigcup_{J\notin P}\h_J$ then $P$ corresponds to the frame $\{\sqrt{2}e_1,\sqrt{2}e_2,e_1\}$.

\item If we select $\mb{v}=\vect{1\\-1\\-1}$ and $\mb{w} = \vect{1\\-1\\1}$, where $\mb{v}\in \hb{1,3}$ and $\mb{w}\in \hb{2,3}$, then we get a frame $\{f_1,f_2,f_3\}\sub\R^2$ whose diagram vectors are
$$\diag{f}_1= \vect{1\\1},\diag{f}_2=\vect{-1\\-1},\diag{f}_3=\vect{-1\\1}.$$
\end{enumerate}
\end{example}

\begin{remark}
Observe that our inverse construction in principle will work in $\Real^n$, in the sense that any collection of $n(n-1)$ vectors which all lie in $K^\perp$ and do not all lie in the same forbidden hyperplane $\h_J$ will determine a collection of $k$ vectors in $\Real^{n(n-1)}$ which sum to 0 precisely on those subsets in $P$.  Further, all such subsets are constructed in this way.  However, given that the the diagram vector map from vectors to diagram vectors is not surjective for $n>2$ \cite[Remark 2.6]{tfs}, these vectors may not be diagram vectors and therefore it may be impossible to invert these to produce frame vectors in $\R^n$.  A deeper understanding of the structure of the diagram vector map may provide insights into answering this problem. In Section \ref{sect:projections} we discuss projections and provide comments as to how dilations of frames may be useful in solving a general inverse problem.
\end{remark}

For completeness we describe a method for computing inverse frames in $H_n$ for a given poset $P$. We restrict our attention to $\R^n$ as the general algorithm is similar. Given a poset $P\sub 2^I$, where $I=\{1,\ldots,k\}$, we inquire whether there exist $\fI\sub\R^n$ so that
$$\sum_{j\in J} \widetilde{f}_j = 0\text{ if and only if } J\in P.$$
In essence we are inquiring whether there exists some point in the real algebraic variety
$$\{(f_1,\ldots,f_k)\in \R^{n\times k}: \sum_{j\in J} \fdiag_j = 0 \;\forall J\in P\}$$
which does not lie in the variety
$$\bigcup_{\substack{J\notin P\\i\in\{1,\ldots,n(n-1)\}  }}\left\{(f_1,\ldots,f_k)\in\R^{n\times k}: \sum_{j\in J} \fdiag_j(i) = 0\right\}.$$
We claim such a problem can be written as a so-called \emph{nonconvex quadratically-constrained program}, i.e., a (nonconvex) system of quadratic inequalities. We summarize this in the following proposition:

\begin{proposition}
For a given poset $P\sub 2^I$, where $I= \{1,\ldots,k\}$, determining whether there exists a frame $\fI\sub\R^n$ with factor poset $P$ is equivalent to finding a solution feasible to the following system of quadratic inequalities:
$$\left\{\begin{array}{l}
\sum_{j\in J}\fdiag_j=0\;\forall J\in P\\
\sum_{i=1}^{n(n-1)} \left(r_{Ji}^++r_{Ji}^-\right)\geq1\;\forall J\notin P\\
\sum_{j\in J} \fdiag_j(i) = r_{Ji}^+-r_{Ji}^-\;\forall J\notin P,\; i =1,\ldots,n(n-1)\\
r_{Ji}^+,r_{Ji}^-\geq 0\;\forall J\notin P,\; i =1,\ldots,n(n-1)\\
r_{Ji}^+r_{Ji}^-=0\;\forall J\notin P,\; i =1,\ldots,n(n-1).
\end{array}\right.$$
\end{proposition}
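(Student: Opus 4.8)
The plan is to build an explicit correspondence between feasible points of the displayed system and (suitably rescaled) frames in $\R^n$ whose factor poset is $P$. The decision variables are the $nk$ coordinates of $f_1,\dots,f_k$ together with the auxiliary reals $r_{Ji}^{+},r_{Ji}^{-}$ for $J\notin P$ and $i=1,\dots,n(n-1)$. Since every coordinate of a diagram vector in $\R^{n(n-1)}$ is a homogeneous quadratic polynomial in the coordinates of the underlying vector, each relation $\sum_{j\in J}\widetilde{f}_j=0$, each scalar relation $\sum_{j\in J}\widetilde{f}_j(i)=r_{Ji}^{+}-r_{Ji}^{-}$, and each complementarity condition $r_{Ji}^{+}r_{Ji}^{-}=0$ is quadratic, while $r_{Ji}^{\pm}\ge 0$ and $\sum_{i}(r_{Ji}^{+}+r_{Ji}^{-})\ge 1$ are linear; rewriting each equality as two inequalities, the displayed constraints indeed form a (nonconvex) system of quadratic inequalities, so the content of the proposition is the equivalence of its feasibility with the inverse factor poset question.

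First I would eliminate the auxiliary variables. Fix $J\notin P$ and $i$, and set $s=\sum_{j\in J}\widetilde{f}_j(i)\in\R$. The constraints $r^{+}-r^{-}=s$, $r^{+},r^{-}\ge 0$, $r^{+}r^{-}=0$ force $r^{+}=\max(s,0)$ and $r^{-}=\max(-s,0)$, hence $r^{+}+r^{-}=|s|$; summing over $i$, the constraint $\sum_{i}(r_{Ji}^{+}+r_{Ji}^{-})\ge 1$ becomes $\sum_{i}\bigl|\sum_{j\in J}\widetilde{f}_j(i)\bigr|\ge 1$, which in particular forces $\sum_{j\in J}\widetilde{f}_j\neq 0$. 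So the auxiliary block is just a quadratic certificate that $\sum_{j\in J}\widetilde{f}_j\neq0$ for every $J\notin P$, while the first block says $\sum_{j\in J}\widetilde{f}_j=0$ for every $J\in P$; together they assert exactly that the subset-sum poset of the diagram vectors $\{\widetilde{f}_i\}_{i\in I}$ equals $P$.

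Given this reduction the two implications are short. Suppose $(\{f_i\},\{r^{\pm}\})$ is feasible. Since $P$ contains no singletons (the standing hypothesis of this section), $\widetilde{f}_i\neq0$ and hence $f_i\neq0$ for every $i$ (recall $\|\widetilde{f}\|=\|f\|^{2}$); picking any nonempty $J\in P$, the diagram-sum characterization of tightness shows $\{f_j\}_{j\in J}$ is a tight frame for $\R^n$, so $\Span\{f_i\}_{i\in I}=\R^n$ and $\{f_i\}$ is a frame, whose factor poset is $\{\emptyset\}\cup\{K\neq\emptyset:\sum_{k\in K}\widetilde{f}_k=0\}=P$. Conversely, let $\{f_i\}_{i\in I}\subset\R^n\setminus\{0\}$ be a frame with factor poset $P$. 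Diagram vectors are homogeneous of degree two, so replacing $f_i$ by $tf_i$ keeps every zero-sum zero and scales each of the finitely many nonzero sums $\sum_{j\in J}\widetilde{f}_j$ ($J\notin P$) by $t^{2}$; for $t$ large enough every such sum has $\ell_1$-norm at least $1$, and for that rescaled frame the choice $r_{Ji}^{+}=\max(\sum_{j\in J}\widetilde{f}_j(i),0)$, $r_{Ji}^{-}=\max(-\sum_{j\in J}\widetilde{f}_j(i),0)$ gives a feasible point.

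The polynomial bookkeeping and the case analysis on $(r^{+},r^{-})$ are routine. The step that needs care is passing from a feasible point to an \emph{honest} frame --- a spanning set --- rather than merely a collection of vectors with the prescribed subset-sum behavior; this is where the no-singleton hypothesis (each $f_i\neq0$) and the presence of a nonempty tight subframe (forcing $\Span\{f_i\}_{i\in I}=\R^n$) enter. Accordingly I would also record the mild caveat that the fully degenerate poset $P=\{\emptyset\}$, which has no nonempty element, is handled separately: there the system is feasible (e.g.\ $f_1=\cdots=f_k=e_1$) regardless of whether $k\ge n$, so in that case one simply checks $k\ge n$ directly.
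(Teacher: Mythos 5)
Your argument is correct and follows essentially the same route as the paper's proof: the key ideas in both are the quadratic representation of $|x|$ via the split variables $r^{+},r^{-}$ with the complementarity condition $r^{+}r^{-}=0$, and the rescaling of the frame to turn the strict inequality $\sum_i\bigl|\sum_{j\in J}\fdiag_j(i)\bigr|>0$ into $\ge 1$. Your additional bookkeeping (verifying that a feasible point yields an honest spanning set, and the degenerate case $P=\{\emptyset\}$) fills in details the paper leaves implicit but does not change the approach.
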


\begin{proof}
It suffices to show that we can represent $\sum_{j\in J} \fdiag_j \neq0$ for $J\notin P$. This is equivalent to
$$\sum_i \left|\sum_{j\in J} \fdiag_j(i)\right| >0.$$
By multiplying all numbers by an appropriate scalar we can express this equivalently as
$$\sum_i \left|\sum_{j\in J} \fdiag_j(i)\right| \geq1.$$
Note that if $x\in\R$ then $|x|$ can be represented exactly using quadratic inequalities: if $x= r^+-r^-$ with $r^+,r^-\geq 0$ and $r^+r^-=0$ then $|x| = r^++r^-$. The desired quadratic representation given in the proposition follows.
\end{proof}
Observe that this system is prohibitively large for all but small examples. Solving nonconvex quadratic inequalities is in general a difficult problem. Such problems can be solved using a variety of branch-and-bound algorithms \cite{linderoth,sahinidis,apogee} or can be approached using heurustic methods adapted from nonlinear programming \cite{bertsekasNLP}, such as an augmented Lagrangean procedure \cite{bertsekasLag} (e.g. ADMM \cite{boyd}).

\subsubsection{Full spark frames and inverses}

A type of frame of particular interest is full spark frames, which have been studied extensively and arise in important applications such as compressive sensing \cite{fullspark}.

\begin{definition}
Let $F=\fI\sub H_n$ be a frame. We define the \emph{spark} of a frame $F$ to be the size of the smallest linearly dependent subset. We say $F$ has \emph{full spark} if $F$ has spark $n+1$, i.e., every subset of $n$ vectors of $F$ is a basis for $H_n$.
\end{definition}

As a consequence of Observation \ref{obs:rowsGiveDiagVecs} we have the following proposition.
\begin{proposition}
Suppose $P$ is the factor poset for some frame in $\R^2$. Then if $\dim(\spandex{P}) = |I|-1$, where $I$ is the index set for $P$, there exists no full spark frame in $\R^2$ with $P$ as its factor poset.
\end{proposition}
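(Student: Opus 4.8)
The plan is to use the explicit general construction from Observation~\ref{obs:rowsGiveDiagVecs}, which says that every frame in $\R^2$ having $P$ as its factor poset arises from a choice of two vectors $\mb{v},\mb{w}\in K^\perp$, where $K=\spandex{P}$, such that no single forbidden hyperplane $\h_J$ (for $J\notin P$) contains both $\mb{v}$ and $\mb{w}$; the associated frame vectors $f_i$ are recovered from the diagram vectors $(v_i,w_i)^T$, i.e.\ $f_i$ is determined (up to sign and rotation) by inverting the diagram map on $(v_i,w_i)^T\in\R^2$. The key observation is that if $\dim(K)=|I|-1=k-1$, then $K^\perp$ is one-dimensional, so $\mb{v}$ and $\mb{w}$ must be scalar multiples of a single fixed vector, say $\mb{u}=(u_1,\dots,u_k)\in\R^k$ spanning $K^\perp$.

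First I would observe that since $P$ contains no singletons, $e_i\notin K$ for each $i$, so $\dim K<k$; the hypothesis $\dim K=k-1$ is thus the extremal case and $K^\perp=\Span\{\mb{u}\}$ is a line. Next, I would note that $\mb{u}$ has no zero coordinate: if $u_i=0$, then $e_i\perp\mb{u}$, so $e_i\in (K^\perp)^\perp=K$, contradicting that $P$ has no singletons. So write $\mb{v}=s\mb{u}$, $\mb{w}=t\mb{u}$ with $(s,t)\neq(0,0)$ (we cannot have both zero, else all diagram vectors vanish and $F$ would be a trivial frame of zero vectors, which is excluded). Then the $i$-th diagram vector is $(su_i,tu_i)^T=u_i(s,t)^T$, a scalar multiple of the fixed nonzero vector $(s,t)^T\in\R^2$, with nonzero coefficient $u_i$.

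Now I would translate this back to the frame vectors. A diagram vector of the form $c\cdot(s,t)^T$ with $c\neq 0$ corresponds to a frame vector $f_i$ lying on a fixed line through the origin in $\R^2$ (the line depends only on $(s,t)$, not on $c$, since scaling the diagram vector by a positive constant scales $f_i$ and a sign change/the specific branch of the inverse keeps $f_i$ on the same line through $0$; concretely, the diagram vector determines $f_i$ up to sign). Hence all $k$ frame vectors $f_1,\dots,f_k$ are (nonzero) multiples of a single vector in $\R^2$. In particular any two of them are linearly dependent, so since $k=|I|\geq n+1=3$ (indeed $P$ has no singletons and contains some nonempty tight set, forcing $k\geq 3$; and if $k<3$ the statement about full spark is vacuous), $F$ has a linearly dependent pair and cannot have full spark. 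Since by Observation~\ref{obs:rowsGiveDiagVecs} \emph{every} $\R^2$-frame with factor poset $P$ is of this form, no full spark frame in $\R^2$ has $P$ as its factor poset.

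The main obstacle I anticipate is the bookkeeping in the last step: carefully justifying that a diagram vector lying on a fixed line in $\R^2$ forces the corresponding frame vector onto a fixed line through the origin, and handling the sign/branch ambiguity of the diagram-vector inverse in $\R^2$ cleanly. This is essentially the observation that the $\R^2$ diagram map $f\mapsto\fdiag$ sends the line $\Span\{f\}$ to a single ray (recall $f$ and $-f$ have the same diagram vector, and the map is homogeneous of degree $2$), so its fibers over a ray are themselves contained in a line through the origin. I would also double-check the edge case where one of $s,t$ is zero, but that only restricts to the earlier special construction and does not change the conclusion. Everything else is a direct consequence of $K^\perp$ being one-dimensional together with Observation~\ref{obs:rowsGiveDiagVecs}.
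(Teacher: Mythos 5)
Your overall route is the same as the paper's: both arguments use Observation \ref{obs:rowsGiveDiagVecs} to reduce to the fact that when $\dim(\spandex{P})=k-1$ the space $K^\perp$ is a line, so every admissible pair $\mb{v},\mb{w}$ is collinear and the diagram vectors $(v_i,w_i)^T=u_i(s,t)^T$ all lie on a common line in $\R^2$. However, your final translation step contains a genuine error: from the diagram vectors lying on a common \emph{line} you conclude that all the frame vectors $f_1,\dots,f_k$ are multiples of a single vector in $\R^2$. That is false. Under the identification of $\R^2$ with $\C$, the diagram map is essentially $z\mapsto z^2$; it sends the line $\Span\{f\}$ onto a single \emph{ray}, so the preimage of a ray is one line through the origin, but the preimage of a full line (two opposite rays) is a union of \emph{two orthogonal lines}. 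Moreover the coefficients $u_i$ must take both signs whenever $P$ contains a nonempty element $J$ (since $\sum_{j\in J}u_j=0$ with no $u_i$ vanishing), so the diagram vectors genuinely occupy both rays and the frame vectors genuinely lie on two distinct orthogonal lines. A concrete counterexample to your intermediate claim is $F=\{e_1,e_2,e_1,e_2\}$: here $\dim(\spandex{P})=3=k-1$, yet the frame vectors are not all collinear.

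The conclusion survives, and this is exactly how the paper closes the argument: a frame in $\R^2$ is full spark if and only if no two of its diagram vectors point along the same ray, and with $k\geq 3$ diagram vectors distributed over only two rays the pigeonhole principle forces two onto the same ray, hence two collinear frame vectors. So your proof needs only this one repair (replace ``all on a single line'' by ``all on two orthogonal lines'' and invoke pigeonhole), but as written the key geometric claim is wrong rather than merely under-justified.
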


\begin{proof}
If $K=\spandex{P}$ and $\dim(\spandex{P}) = |I|-1=k-1$ then $K^\perp$ is 1-dimensional, and therefore any $\mb{v},\mb{w}\in K^\perp$ are collinear. Hence, the diagram vectors
$$\colvec{2}{v_1}{w_1}, \cdots, \colvec{2}{v_k}{w_k}$$
are collinear. Because a frame in $\R^2$ is full spark if and only if no two of its diagram vectors point along the same ray (the nonnegative span of a single vector), whenever $k\geq 3$ this means that a frame with factor poset $P$ would necessarily not have full spark.
\end{proof}

Now we give a necessary and sufficient condition for the existence of a full spark frame in $H_2\setminus\{0\}$ whose factor poset coincides with a given poset $P$.

\begin{theorem}
Let $I=\{1,2,\ldots,k\}$ be some fixed index set and $P\sub 2^I$ be a poset ordered by set inclusion and which contains no singletons. Then there exists  a full spark frame $F = \fI\sub \R^2\setminus\{0\}$ with factor poset $P$ if and only if $P$ is span-closed and $(e_i-\alpha e_j)\notin \spandex{P}$ for $i,j\in I$, $i\neq j$, and $\alpha>0$.
\end{theorem}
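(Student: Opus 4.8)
The plan is to obtain both directions from Observation~\ref{obs:rowsGiveDiagVecs} together with the characterization noted earlier — that a frame in $\R^2\setminus\{0\}$ is full spark precisely when no two of its diagram vectors lie on a common ray. Write $K := \spandex{P}$. I will use that, by Observation~\ref{obs:rowsGiveDiagVecs}, a frame in $\R^2\setminus\{0\}$ has factor poset $P$ if and only if its diagram vectors are $d_i = (v_i,w_i)$ for some $\mb{v},\mb{w}\in K^\perp$ with the property that no forbidden hyperplane $\h_J = \{\aaa\in K^\perp:\langle\aaa,[J]\rangle=0\}$, $J\notin P$, contains both $\mb{v}$ and $\mb{w}$; and that, since the diagram map $\R^2\to\R^2$ is onto $\R^2\setminus\{0\}$ with $\diag f=0$ iff $f=0$, such a collection consists of nonzero vectors iff every $d_i\ne0$, and is full spark iff for each $i\ne j$ there is no $\lambda>0$ with $d_i=\lambda d_j$.

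For necessity, suppose $F=\fI\sub\R^2\setminus\{0\}$ is full spark with factor poset $P$; span-closedness is immediate from Theorem~\ref{thm:inverse}. Collecting the coordinates of the diagram vectors $d_i=\diag{f_i}$ into $\mb{v},\mb{w}$, the relations $\sum_{j\in J}d_j=0$ for $J\in P$ force $\mb{v},\mb{w}\in K^\perp$. If $e_i-\alpha e_j\in K$ for some $i\ne j$ and $\alpha>0$, then $\langle\mb{v},e_i-\alpha e_j\rangle=\langle\mb{w},e_i-\alpha e_j\rangle=0$ yields $d_i=\alpha d_j$, and since $d_j\ne0$ this puts $d_i$ and $d_j$ on a common ray — contradicting full spark. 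So $e_i-\alpha e_j\notin K$ for all $i\ne j$, $\alpha>0$.

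For sufficiency, assume $P$ is span-closed, contains no singletons, and $e_i-\alpha e_j\notin K$ for all $i\ne j$, $\alpha>0$; first note that span-closedness and the no-singletons hypothesis together give $e_i\notin K$ for all $i$ (since each $\{i\}\notin P$). I would then pick $(\mb{v},\mb{w})\in K^\perp\times K^\perp$ lying outside each of the following sets: $B_J:=\h_J\times\h_J$ for $J\in2^I\setminus P$; $Z_i:=\{(\mb{v},\mb{w}):v_i=w_i=0\}$ for $i\in I$; and $Q_{ij}:=\{(\mb{v},\mb{w}):v_iw_j-v_jw_i=0\}$ for every pair $i\ne j$ for which $Q_{ij}$ is a proper subset of $K^\perp\times K^\perp$. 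Since $[J]\notin K$ for $J\notin P$ and $e_i\notin K$, each $B_J$ and each $Z_i$ is a proper linear subspace of $K^\perp\times K^\perp$, and each retained $Q_{ij}$ is by construction a proper algebraic subset; as $\R$ is infinite, a finite union of proper algebraic subsets of $K^\perp\times K^\perp$ is again proper, so a suitable $(\mb{v},\mb{w})$ exists. Avoiding the $Z_i$ makes every $d_i=(v_i,w_i)$ nonzero; avoiding the $B_J$ guarantees via Observation~\ref{obs:rowsGiveDiagVecs} that the $d_i$ are the diagram vectors of a frame $F\sub\R^2\setminus\{0\}$ whose factor poset is exactly $P$.

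It then remains to verify that $F$ is full spark, i.e. that no two diagram vectors $d_i,d_j$ ($i\ne j$) share a ray. If $Q_{ij}$ was retained, then $(\mb{v},\mb{w})\notin Q_{ij}$, so $d_i,d_j$ are linearly independent and a fortiori not on a common ray. If $Q_{ij}$ was discarded, then $v_iw_j=v_jw_i$ for \emph{all} $\mb{v},\mb{w}\in K^\perp$; fixing $\mb{v}\in K^\perp$ with $v_i\ne0$ (possible since $e_i\notin K$), the vanishing of $\langle\mb{w},v_ie_j-v_je_i\rangle$ for every $\mb{w}\in K^\perp$ puts $v_ie_j-v_je_i\in(K^\perp)^\perp=K$, hence $e_j-\gamma e_i\in K$ with $\gamma=v_j/v_i$; as $e_j\notin K$ we get $\gamma\ne0$, so $e_i-\beta e_j\in K$ with $\beta=1/\gamma\ne0$, and the sign hypothesis forces $\beta<0$ — in which case $d_i=\beta d_j$ with $d_i\ne0$ puts $d_i,d_j$ on \emph{opposite} rays. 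Either way the two vectors do not share a ray, so $F$ is full spark. I expect this final dichotomy to be the only nonroutine point: recognizing that ``$v_iw_j=v_jw_i$ identically on $K^\perp$'' is equivalent to a linear relation of the shape $e_i-\beta e_j\in K$, and that the hypothesis then pins down the sign of $\beta$ so that the forced-parallel pairs point along opposite rather than identical rays. Everything else is a direct appeal to Observation~\ref{obs:rowsGiveDiagVecs} or the standard fact that a finite union of proper subvarieties cannot cover the ambient space.
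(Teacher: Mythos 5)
Your proof is correct and follows essentially the same route as the paper: both directions rest on the $(\mb{v},\mb{w})\in K^\perp\times K^\perp$ parametrization of Observation~\ref{obs:rowsGiveDiagVecs}, the ray characterization of full spark in $\R^2$, and the observation that $(e_i-\alpha e_j)\in K$ forces $\diag{f}_i=\alpha\diag{f}_j$. The only difference is organizational: the paper chooses $\mb{v}$ (with all nonzero entries) first and then picks $\mb{w}$ off finitely many hyperplanes $(e_i-\alpha e_j)^\perp\cap K^\perp$, whereas you choose the pair jointly off a finite union of proper algebraic subsets and handle the degenerate pairs via the explicit dichotomy showing the forced proportionality constant must be negative --- the same use of the hypothesis in slightly different packaging.
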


\begin{proof}
We begin with the forward direction. From Theorem \ref{thm:r2inverse} it follows that $P$ must be span-closed for the existence of a frame in $\R^2\setminus\{0\}$ with factor poset $P$. Suppose there exists some $i,j\in I$ with $i\neq j$ and $\alpha>0$ such that $(e_i-\alpha e_j)\in\spandex{P}$. Note that every $\mb{a}\in K^\perp$ where $K=\spandex{P}$ is orthogonal to $(e_i-\alpha e_j)$. Thus any choice of vectors $\mb{v},\mb{w}\in K^\perp$ such that no single forbidden hyperplane $\h_J$ for $J\notin P$ contains both $\mb{v}$ and $\mb{w}$ will produce diagram vectors that will satisfy $\diag{f}_i=\alpha\diag{f}_j$. This will mean the frame vectors $f_i$ and $f_j$ are collinear. Hence the frame cannot be full spark.

For the reverse direction let $K=\spandex{P}$. If $P$ is span-closed then from Theorem \ref{thm:r2inverse} there exists a frame $F=\fI\sub \R^2\setminus\{0\}$ with factor poset $P$. Suppose $(e_i-\alpha e_j)\notin K$ for every $\alpha>0$ and $i,j\in I$, $i\neq j$. Then $(e_i-\alpha e_j)^\perp\cap K^\perp\subsetneq K^\perp$. If $\mb{v}\in\R^k\cap K^\perp$ has all nonzero components then for each $i,j\in I$ there is exactly one $\alpha>0$ such that $\mb{v}\in (e_i-\alpha e_j)^\perp\cap K^\perp$. Thus for a given $\mb{v}$ with all nonzero components there can be only $\binom{k}{2}$ hyperplanes of the form $(e_i-\alpha e_j)^\perp \cap K^\perp$ containing $\mb{v}$. If we select $\mb{v}$ first, then we select $\mb{w}\in\R^k\cap K^\perp$ with all nonzero components so that $\mb{w}$ is not in any of the finite number of hyperplanes of the form $(e_i-\alpha e_j)^\perp\cap K^\perp$ containing $\mb{v}$ nor any of the forbidden hyperplanes $\h_J$ for $J\notin P$. Such a choice of $\mb{v}$ and $\mb{w}$ produces a full spark frame since the diagram vectors do not satisfy $\diag{f}_i=\alpha\diag{f}_j$ for any $i,j\in I$, $i\neq j$, and $\alpha>0$. This completes the proof.
\end{proof}

\subsection{Necessary Conditions for Factor Posets}

In this section we examine combinatorial conditions that are necessary for a poset to be a factor poset.

\subsubsection{Closure Condition}

Consider a factor poset $P$ containing the elements $\{1,2,3,4\}$ and $\{3,4,5,6\}$, and let the frame for this poset be denoted $\f$.  We know of course that the corresponding diagram vectors satisfy $$\diag{f_1} + \diag{f_2} + \diag{f_3} + \diag{f_4} = \diag{f_3} + \diag{f_4} + \diag{f_5} + \diag{f_6}$$ since both sums are equal to zero.  By subtracting $\diag{f_3} + \diag{f_4}$ from both sides of this equation, we obtain that $$\diag{f_1} + \diag{f_2} = \diag{f_5} + \diag{f_6}.$$  In this case, we say that the sets $\{1,2\}$ and $\{5,6\}$ are \emph{copies }of each other.  

\begin{definition}
For a given poset $P$ and any $A,B\in P$, then we say that $A \setminus B$ and $B \setminus A$ are \emph{copies} of one another.  
\end{definition}

\begin{remark}
For a given frame $\f$ whose factor poset is $P$, if $J$ and $K$ are copies of each other then $$\sum_{i \in J} \diag{f_i} = \sum_{i \in K} \diag{f_i}.$$  
\end{remark}

Because copies have the same sum of diagram vectors, if we remove a set from a tight frame and replace it with a copy we will not affect the tightness of that frame.  This is formalized in the following proposition.  We omit its proof.  

\begin{proposition}[Closure condition] \label{prop:closure}
For any factor poset $P$, if $J$ and $K$ are copies of each other, and there exists an element $A\in P$ such that $J \subseteq A$ and $K \cap A = \emptyset$, then $(A \setminus J) \cup K \in P$.  
\end{proposition}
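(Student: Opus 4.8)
The plan is to pass entirely to diagram vectors and reduce the statement to elementary set bookkeeping. Fix a frame $F=\f\sub H_n\setminus\{0\}$ with $\fpos{F}=P$; such a frame exists because $P$ is assumed to be a factor poset. The only two facts I would use are: (i) since $A\in P$, we have $\sum_{i\in A}\diag{f_i}=0$ by the diagram‑vector characterization of tight frames; and (ii) since $J$ and $K$ are copies, the Remark preceding the proposition gives $\sum_{i\in J}\diag{f_i}=\sum_{i\in K}\diag{f_i}$ (if one prefers, this is re‑derived from $J=B\setminus C$, $K=C\setminus B$ with $B,C\in P$, since both sums then equal $-\sum_{i\in B\cap C}\diag{f_i}$ once $\sum_{i\in B}\diag{f_i}=\sum_{i\in C}\diag{f_i}=0$).

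Next I would use the hypotheses $J\sub A$ and $K\cap A=\emptyset$ to write $(A\setminus J)\cup K$ as a disjoint union and compute
$$\sum_{i\in(A\setminus J)\cup K}\diag{f_i}=\Big(\sum_{i\in A}\diag{f_i}-\sum_{i\in J}\diag{f_i}\Big)+\sum_{i\in K}\diag{f_i}=-\sum_{i\in J}\diag{f_i}+\sum_{i\in K}\diag{f_i}=0,$$
the first equality using $J\sub A$ and $K\cap A=\emptyset$, and the last equality being exactly fact (ii). So the diagram vectors indexed by $(A\setminus J)\cup K$ sum to zero.

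Finally I would close by invoking the diagram‑vector theorem in the reverse direction: because $F$ contains no zero vectors, the subfamily $\{f_i\}_{i\in(A\setminus J)\cup K}$ is a collection not all of whose members are zero and whose diagram vectors sum to zero, hence a tight frame for $H_n$; therefore $(A\setminus J)\cup K\in P$. The sole degenerate case is $(A\setminus J)\cup K=\emptyset$, which belongs to $P$ by the standing convention $\emptyset\in\fpos{F}$.

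There is no genuinely hard step: the notion of \emph{copy} is defined precisely so that exchanging $J$ for $K$ does not change the sum of the corresponding diagram vectors, and $A\in P$ says that sum started at zero. The only points requiring care are the disjointness needed for the middle display — which is exactly what the hypotheses $J\sub A$ and $K\cap A=\emptyset$ supply — and the empty‑set edge case; this is presumably why the authors omit the proof.
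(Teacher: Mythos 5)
Your argument is correct and is exactly the one the paper intends: the authors omit a formal proof but justify the proposition by the same observation that copies have equal diagram-vector sums, so swapping $J$ for $K$ inside the tight set $A$ preserves the zero sum. Your handling of the disjointness and the empty-set edge case fills in the omitted details faithfully.
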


\begin{example}
Let $\{1,2,3\}, \{3,4\},$ and $\{1,2,5\}$ be elements of a factor poset $P$.  Then $J = \{1,2\}$ and $K = \{4\}$ are copies, and by the closure condition if $A = \{1,2,5\}$ then $\{4,5\}$ must also be an element of $P$.  
\end{example}

It is a previously known result \cite[Prop. 3.6]{2012report} that for any factor poset $P$, and any two elements $A,B\in P$, the following are equivalent: 

\begin{enumerate}
\item $A \cup B \in P$
\item $A \cap B \in P$
\item $A \setminus B \in P$
\end{enumerate}

\begin{obs}
The equivalence of conditions $(1)$, $(2)$, and $(3)$ above follows from the closure condition.  
\end{obs}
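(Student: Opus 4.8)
The plan is to extract two elementary consequences of the closure condition (Proposition~\ref{prop:closure}) and then chain them into a short cycle of implications. The key device is that $\emptyset\in P$ always, so for \emph{any} $X\in P$ the pair $\{X,\emptyset\}$ is a pair of copies of each other (since $X\setminus\emptyset=X$ and $\emptyset\setminus X=\emptyset$). Feeding this ``trivial copy'' into Proposition~\ref{prop:closure} in the two possible ways yields the following facts, valid for any factor poset $P$:
\begin{itemize}
\item[(L1)] if $X,Y\in P$ with $X\cap Y=\emptyset$, then $X\cup Y\in P$ --- apply the closure condition with copies $J=\emptyset$, $K=Y$ and ambient set $X\in P$, noting $\emptyset\subseteq X$ and $Y\cap X=\emptyset$, so that $(X\setminus\emptyset)\cup Y=X\cup Y\in P$;
\item[(L2)] if $X,Z\in P$ with $Z\subseteq X$, then $X\setminus Z\in P$ --- apply the closure condition with copies $J=Z$, $K=\emptyset$ and ambient set $X\in P$, noting $Z\subseteq X$ and $\emptyset\cap X=\emptyset$, so that $(X\setminus Z)\cup\emptyset=X\setminus Z\in P$.
\end{itemize}

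With $A,B\in P$ fixed, I would then prove the cycle $(1)\Rightarrow(2)\Rightarrow(3)\Rightarrow(1)$ purely from (L1) and (L2). For $(1)\Rightarrow(2)$: from $A\cup B\in P$ and $B\in P$, apply (L2) with $X=A\cup B$, $Z=B$ to get $A\setminus B=(A\cup B)\setminus B\in P$; then apply (L2) again with $X=A$, $Z=A\setminus B$ (using $A\setminus B\subseteq A$) to get $A\cap B=A\setminus(A\setminus B)\in P$. For $(2)\Rightarrow(3)$: from $A\in P$ and $A\cap B\in P$ with $A\cap B\subseteq A$, apply (L2) to get $A\setminus B=A\setminus(A\cap B)\in P$. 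For $(3)\Rightarrow(1)$: from $A\setminus B\in P$ and $B\in P$ with $(A\setminus B)\cap B=\emptyset$, apply (L1) to get $A\cup B=(A\setminus B)\cup B\in P$. Closing the cycle gives the full equivalence.

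There is no deep obstacle here; the only thing requiring care is the bookkeeping in each invocation of Proposition~\ref{prop:closure} --- verifying that the stated hypotheses $J\subseteq C$ and $K\cap C=\emptyset$ literally hold --- and recognizing the trivial-copy-with-$\emptyset$ trick that converts the closure condition into the more usable lemmas (L1) and (L2). Once those are in hand the argument is a one-line cycle.
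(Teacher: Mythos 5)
Your proof is correct and uses essentially the same mechanism as the paper: every application of the closure condition pairs a set with $\emptyset$ as its copy, exactly as in the paper's argument. The only differences are cosmetic — you package the idea into the reusable lemmas (L1) and (L2) and run the cycle as $(1)\Rightarrow(2)\Rightarrow(3)\Rightarrow(1)$, whereas the paper proves $(1)\Rightarrow(3)\Rightarrow(2)\Rightarrow(1)$ directly, obtaining each copy relation from a pair of elements already known to lie in $P$.
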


\begin{proof}
Assume that $A \cup B \in P$.  By taking the relative complements of $A \cup B$ with $B$, we see that $A \setminus B$ and $\emptyset$ are copies.  The closure condition then says that, since $\emptyset \in P$ necessarily, $(\emptyset \setminus \emptyset) \cup (A \setminus B) = A \setminus B$ is in $P$.  Thus $(1)$ implies $(3)$.  

Assume that $A \cap B \in P$.  By taking the relative complements of $A \cap B$ with $A$, we see that $A \setminus B$ and $\emptyset$ are copies.  The closure condition then says that $(B \setminus \emptyset) \cup (A \setminus B) = A \cup B$ is in $P$.  Thus $(2)$ implies $(1)$.  

Assume that $A \setminus B \in P$.  By taking the relative complements of $A \setminus B$ with $A$, we see that $A \cap B$ and $\emptyset$ are copies.  The closure condition then says that $(\emptyset \setminus \emptyset) \cup (A \cap B) = A \cap B$ is in $P$.  Thus $(3)$ implies $(2)$.  This is sufficient to show that $(1)$, $(2)$, and $(3)$ are equivalent.  
\end{proof}

\begin{obs}
Note that for certain posets $P$ computing $\spandex{P}$ can be completed only using the empty cover. We claim that whenever $P$ satisfies the closure condition $(C)$ from Proposition \ref{prop:closure}, we have that $\spandex{P} = \Span\{[J]:J\in EC(P)\}$. This is because any poset which satisfies $(C)$ is generated by its empty cover \cite[Corr. 3.9]{2012report}. Note that for $J_1,J_2\in P$, we have $J_1\cup J_2 \in P$ iff $J_1\cap J_2\in P$. Yet, if $J_1\cap J_2\in P$, then we find that $[J_1\cup J_2] = [J_1]+[J_2]-[J_1\cap J_2]$, which proves the claim because then
$$\Span(\{[J]:J\in P\}) = \Span(\{[J]:J\in EC(P)\}).$$
\end{obs}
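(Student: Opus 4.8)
The plan is to prove the asserted identity $\spandex{P}=\Span\{[J]:J\in EC(P)\}$ by checking the two inclusions. One direction is free: $EC(P)\subseteq P$, so $\{[J]:J\in EC(P)\}\subseteq\{[J]:J\in P\}$, and since passing to the span is monotone we get $\Span\{[J]:J\in EC(P)\}\subseteq\spandex{P}$. All the content is in the reverse inclusion, and since $\spandex{P}$ is spanned by the vectors $[J]$ with $J\in P$, it is enough to prove that $[J]\in\Span\{[K]:K\in EC(P)\}$ for each individual $J\in P$.

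I would prove the latter by induction on $|J|$. If $J=\emptyset$ then $[J]=0$ lies in every subspace, and if $J\in EC(P)$ there is nothing to do; so assume $J$ is a nonempty element of $P$ that is not minimal. The collection of nonempty members of $P$ that are contained in $J$ is finite and nonempty (it contains $J$ itself), so we may pick one, call it $J'$, of smallest cardinality. Minimality of $|J'|$ forces $J'$ to be a minimal nonempty element of $P$, hence $J'\in EC(P)$, and $J'\subsetneq J$ by our case assumption. The key step is to show that $J\setminus J'\in P$: this is precisely the implication $(1)\Rightarrow(3)$ obtained above from the closure condition (Proposition~\ref{prop:closure}), applied with $A=J$ and $B=J'$, both of which are in $P$ and satisfy $A\cup B=J\in P$. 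Since $J'\neq\emptyset$ we have $|J\setminus J'|<|J|$, so the inductive hypothesis gives $[J\setminus J']\in\Span\{[K]:K\in EC(P)\}$. Finally, as $J$ is the disjoint union of $J'$ and $J\setminus J'$, the index vectors satisfy $[J]=[J']+[J\setminus J']$ coordinatewise; both summands lie in $\Span\{[K]:K\in EC(P)\}$, hence so does $[J]$, closing the induction.

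Essentially the same argument can be phrased through the sketch already indicated: a poset satisfying $(C)$ is generated by its empty cover \cite[Corr. 3.9]{2012report}, and one can collapse an expression of $[J]$ as a sum of empty-cover index vectors two terms at a time using $[J_1\cup J_2]=[J_1]+[J_2]-[J_1\cap J_2]$, which is valid because $J_1\cup J_2\in P$ forces $J_1\cap J_2\in P$ under $(C)$. The only genuine obstacle in either formulation is verifying that the auxiliary sets produced along the way ($J\setminus J'$, or $J_1\cap J_2$) still belong to $P$, and this is exactly the place where the closure hypothesis is indispensable; without $(C)$ the identity can fail, as e.g. $P=\{\emptyset,\{1,2\},\{1,2,3\}\}$ shows, so no purely inclusion–exclusion bookkeeping can succeed on its own.
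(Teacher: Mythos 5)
Your argument is correct, and it is a complete, self-contained proof of an identity that the paper only sketches. The nontrivial inclusion is handled exactly where it should be: for each $J\in P$ you exhibit $[J]$ as a sum of empty-cover index vectors by strong induction on $|J|$, peeling off a minimal nonempty $J'\in P$ with $J'\subseteq J$ (which is automatically in $EC(P)$, and is proper since $J\notin EC(P)$), invoking the closure-condition consequence $A\cup B\in P\Rightarrow A\setminus B\in P$ to keep $J\setminus J'$ inside $P$, and then using the disjoint-union identity $[J]=[J']+[J\setminus J']$. The paper instead leans on the external fact that a poset satisfying $(C)$ is generated by its empty cover and on the inclusion--exclusion identity $[J_1\cup J_2]=[J_1]+[J_2]-[J_1\cap J_2]$, which requires knowing $J_1\cap J_2\in P$ and, strictly speaking, still needs an induction to push $[J_1\cap J_2]$ down into the empty-cover span. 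Your set-difference decomposition avoids the correction term entirely (the summands are disjoint, so no $[J_1\cap J_2]$ appears) and avoids the citation, at the cost of being phrased as an explicit induction rather than a one-line generation argument; the two routes rely on the same underlying closure mechanism, and your closing counterexample $P=\{\emptyset,\{1,2\},\{1,2,3\}\}$ correctly isolates where that hypothesis is indispensable.
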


\subsubsection{Sign condition}

\begin{definition}
A $\emph{signing}$ of a frame $F = \fI$ is any function from $I$ to the set $\{+,-\}$ with the property that every tight subframe of $F$ contains at least one element with positive sign and at least one element with negative sign.  

A \emph{signing} of a poset $P$ with index set $I$ (by which we mean $I = \bigcup_{A\in P} A$) is any function from $I$ to the set $\{+,-\}$ with the property that every non-empty element of $P$ contains at least one index with positive sign and at least one index with negative sign.  
\end{definition}

\begin{obs}
If $F$ is a frame and $P$ its factor poset, the set of all signings for $F$ is equal to the set of all signings for $P$.  
\end{obs}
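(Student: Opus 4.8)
The plan is to reduce the claim directly to the defining property of the factor poset, so that essentially no computation is needed. Write $F = \fI$ and $P = \fpF$. The crucial observation is that, by the very definition of $\fpF$, a nonempty subset $J \sub I$ lies in $P$ if and only if $\fJ$ is a tight frame for $\Hn$; thus $J \mapsto \fJ$ is a bijection between the nonempty elements of $P$ and the tight subframes of $F$. (The set $\emptyset$ lies in $P$ only by convention and is not a tight subframe, but it is likewise excluded from the signing condition for $P$, which speaks only of nonempty elements, so it creates no discrepancy.)

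Granting this, the proof is a direct comparison of the two definitions applied to a fixed function $\sigma \colon I \to \{+,-\}$. By definition, $\sigma$ is a signing of $F$ precisely when, for every tight subframe $\fJ$ of $F$, the index set $J$ contains at least one index assigned $+$ by $\sigma$ and at least one assigned $-$. Transporting this condition along the bijection above, it reads: for every nonempty $J \in P$, the set $J$ contains an index of each sign --- which is exactly the defining condition for $\sigma$ to be a signing of $P$. Since the two conditions on $\sigma$ are word-for-word identical after applying the bijection, the set of signings of $F$ coincides with the set of signings of $P$.

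The only point needing a word of care is that the two notions are nominally declared on different index sets: a signing of $P$ is a function on $\bigcup_{A\in P} A$, whereas a signing of $F$ is a function on $I$. These agree whenever every vector of $F$ occurs in some tight subframe (in particular when $F$ is itself tight, so $I\in P$). In general, an index of $I$ lying in no element of $P$ also lies in no tight subframe of $F$, hence is unconstrained by both definitions; reading both notions as functions on $I$ with such indices left free therefore preserves the claimed equality. I expect no genuine obstacle here beyond this bookkeeping: the whole content of the observation is that ``tight subframe of $F$'' and ``nonempty element of $\fpF$'' are two names for the same family of objects.
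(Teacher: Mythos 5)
Your proposal is correct and matches the paper's treatment: the paper states this observation without proof, regarding it as immediate from the fact that the nonempty elements of $\fpF$ are by definition exactly the index sets of tight subframes of $F$, which is precisely the unfolding you carry out. Your extra remark about the nominal domain discrepancy (indices lying in no tight subframe) is sound bookkeeping and does not change the substance.
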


Given a frame $F = \f\sub\R^n$, consider the set of diagram vectors $\diagf \sub \R^{n(n-1)}$.  Any codimension 1 hyperplane $\Gamma$ in $\R^{n(n-1)}$ divides the space into two halfspaces $\Gamma^+$ and $\Gamma^{-}$. These are determined (up to an overall sign) by any nonzero $\tau\in \Gamma^\perp$. For any such $\tau$ we let
$$\Gamma^+ = \{x\in \R^{n(n-1)}:\langle x,\tau\rangle \geq0\}\text{\quad and \quad} \Gamma^-=\{x\in \R^{n(n-1)}:\langle x,\tau\rangle \leq0 \}.$$
Equivalently, given a vector $\tau\in\R^{n(n-1)}$ we can divide $\R^{n(n-1)}$ into two regions by considering the sign of the inner product of a given vector $x\in \R^n$ with $\tau$.  If none of the diagram vectors for $F$ lie on $\Gamma$ (equivalently none are orthogonal to $\tau$), then this is a signing of $F$.  

\begin{proposition} \label{linesgetsignings}
For a given frame $F = \fI$, and a vector $\tau\in\R^{n(n-1)}$ which is not orthogonal to any diagram vector of $F$, the function $i \mapsto \operatorname{sign}(\inpro{\diag{f}_i, \tau})$ is a signing of $F$.  
\end{proposition}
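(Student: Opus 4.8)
The plan is to reduce the statement to two ingredients: the diagram-vector characterization of tight frames in $H_n$ (from \cite{tfs}), and an elementary sign argument about finite sums of nonzero reals. First I would check that the proposed function is even well-defined as a $\{+,-\}$-valued map. Since $\tau$ is orthogonal to no diagram vector of $F$, we have $\inpro{\diag{f}_i,\tau}\neq 0$ for every $i\in I$, so $\operatorname{sign}(\inpro{\diag{f}_i,\tau})\in\{+,-\}$ for each $i$. This is the first of two places the hypothesis on $\tau$ is used.

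Next, fix any $J\sub I$ such that $\fJ$ is a tight frame for $H_n$; by the definition of a signing it suffices to exhibit, in each such $J$, an index with positive sign and an index with negative sign. By the characterization of tight frames via diagram vectors (the extension of Theorem \ref{thm:diagSumR2} to $H_n$), $\sum_{j\in J}\diag{f}_j = 0$. Taking the inner product of both sides with $\tau$ and using linearity of the inner product in the first argument yields
$$\sum_{j\in J}\inpro{\diag{f}_j,\tau} = 0.$$

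Finally I would invoke the elementary fact that a finite sum of nonzero real numbers equal to $0$ must contain at least one strictly positive summand and at least one strictly negative summand: if every $\inpro{\diag{f}_j,\tau}$ were positive the sum would be positive, and symmetrically for the negative case. Since each summand $\inpro{\diag{f}_j,\tau}$ is nonzero (by the well-definedness step) and the sum is $0$, there exist $j_+,j_-\in J$ with $\inpro{\diag{f}_{j_+},\tau}>0$ and $\inpro{\diag{f}_{j_-},\tau}<0$; that is, the map assigns $+$ to $j_+$ and $-$ to $j_-$. As $J$ was an arbitrary tight subframe, the map $i\mapsto\operatorname{sign}(\inpro{\diag{f}_i,\tau})$ is a signing of $F$.

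There is no substantive obstacle here; the proof is a direct consequence of known results. The only point worth flagging is that the hypothesis ``$\tau$ not orthogonal to any diagram vector'' is used twice and is essential to both: once so that $\operatorname{sign}(\inpro{\diag{f}_i,\tau})$ lies in $\{+,-\}$ at all, and once (via nonvanishing of every summand) so that the relation $\sum_{j\in J}\inpro{\diag{f}_j,\tau}=0$ forces both signs to occur within each tight subframe.
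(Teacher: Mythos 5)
Your proof is correct and follows essentially the same route as the paper's: take any tight subframe $J$, use $\sum_{j\in J}\diag{f}_j=0$ to get $\sum_{j\in J}\inpro{\diag{f}_j,\tau}=0$, and note that a sum of nonzero reals equal to zero must contain both signs. The explicit well-definedness check is a nice touch but adds nothing substantively different.
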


\begin{proof}
Suppose that we have a tight subframe $\fJ$ of $F$ and a vector $\tau$ which is not orthogonal to any diagram vector for $F$. Therefore, $\sum_{i \in J} 
\diag{f}_i = 0$ since this subframe is tight.  But the inner product of this sum with $\tau$ is $\sum_{i \in J} \inpro{\diag{f}_i,\tau}$.  We know by assumption that none of these summands is zero, and the sum is non-empty, so if $\fJ$ does not contain at least one element of positive sign and at least one element of negative sign, then this is a set of real numbers of all the same sign which add up to $\inpro{0,\tau} = 0$, a contradiction.  

\end{proof}

We now present a number of immediate corollaries of Proposition \ref{linesgetsignings}. Throughout we assume the poset $P$ satisfies $P\sub 2^I$.

\begin{corollary}[Sign Condition]
If a poset $P$ does not have any signings, then it is not a factor poset for any frame (with all non-zero vectors).  
\end{corollary}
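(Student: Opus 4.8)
The plan is to prove the contrapositive: if $P$ is the factor poset of some frame $F = \fI \sub \R^n$ with all nonzero vectors, then $P$ has a signing. The key observation is that Proposition \ref{linesgetsignings} hands us a recipe for producing a signing of $F$ — namely, pick any vector $\tau \in \R^{n(n-1)}$ that is not orthogonal to any of the diagram vectors $\diag{f}_i$, and then $i \mapsto \sign(\inpro{\diag{f}_i,\tau})$ is a signing of $F$. So the first step is simply to verify that such a $\tau$ exists: since $F$ contains no zero vectors, every diagram vector $\diag{f}_i$ is nonzero (this uses the Remark after the diagram vector theorem, that $\|f\|=1$ forces $\|\diag{f}\|=1$, together with homogeneity), and hence each orthogonality condition $\inpro{x,\diag{f}_i}=0$ cuts out a hyperplane in $\R^{n(n-1)}$. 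The union of finitely many hyperplanes is a proper (measure-zero) subset of $\R^{n(n-1)}$, so a suitable $\tau$ can be chosen outside all of them.

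The second step is to transfer this signing of the frame $F$ to a signing of the poset $P$. This is exactly what the Observation preceding the statement (''If $F$ is a frame and $P$ its factor poset, the set of all signings for $F$ is equal to the set of all signings for $P$'') provides: every tight subframe $\fJ$ corresponds to an element $J \in P$, and vice versa, so a function $I \to \{+,-\}$ that places a positive and a negative index in every tight subframe does exactly the same for every nonempty element of $P$. Thus the signing of $F$ produced in the first step is a signing of $P$, contradicting the hypothesis that $P$ has no signings. Concluding, no such frame $F$ can exist.

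I do not expect any serious obstacle here — the corollary is an essentially immediate consequence of Proposition \ref{linesgetsignings} plus the Observation identifying frame-signings with poset-signings. The only point requiring a word of care is the existence of the vector $\tau$, i.e. ensuring that the relevant diagram vectors are genuinely nonzero so that each forbidden set is a proper hyperplane; this is where the standing assumption that $F$ has no zero vectors is used. A concise proof would therefore read: Suppose for contradiction that $P = \fpF$ for some frame $F = \fI \sub \R^n \setminus \{0\}$. Each diagram vector $\diag{f}_i$ is nonzero, so $\R^{n(n-1)} \setminus \bigcup_{i\in I} \diag{f}_i^\perp \neq \emptyset$; pick $\tau$ in this set. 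By Proposition \ref{linesgetsignings}, $i \mapsto \sign(\inpro{\diag{f}_i,\tau})$ is a signing of $F$, and by the preceding Observation it is a signing of $P$ — contradicting that $P$ has no signings.
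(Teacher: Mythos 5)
Your proof is correct and follows exactly the route the paper intends: the corollary is presented as an immediate consequence of Proposition \ref{linesgetsignings} together with the observation identifying signings of a frame with signings of its factor poset, and your only added detail --- verifying that a suitable $\tau$ exists because the finitely many hyperplanes $\diag{f}_i^\perp$ cannot cover $\R^{n(n-1)}$ when all diagram vectors are nonzero --- is precisely the point the paper leaves implicit. Nothing further is needed.
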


\begin{corollary} \label{cor:parasign}
Let $P$ be a poset and $i,j$ two of its indices.  If $\sigma(i) = \sigma(j)$ for every signing $\sigma$ of $P$ then every frame $\fI$ with $P$ as its factor poset will have $f_i$ and $f_j$ collinear.  
\end{corollary}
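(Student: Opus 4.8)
The plan is to argue by contraposition. Suppose that $F=\fI$ is a frame with all vectors nonzero whose factor poset is $P$, and suppose that $f_i$ and $f_j$ are \emph{not} collinear; I will produce a signing $\sigma$ of $P$ with $\sigma(i)\neq\sigma(j)$, contradicting the hypothesis. By the observation preceding Proposition \ref{linesgetsignings} the signings of $F$ coincide with the signings of $P$, and Proposition \ref{linesgetsignings} manufactures signings of $F$ from any vector $\tau\in\R^{n(n-1)}$ that is orthogonal to no diagram vector of $F$. So the whole problem reduces to finding such a $\tau$ with the extra property that $\langle\diag{f}_i,\tau\rangle$ and $\langle\diag{f}_j,\tau\rangle$ have opposite signs.

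The technical heart of the argument, and the step I expect to be the main obstacle, is to show that non-collinearity of $f_i$ and $f_j$ forces $\diag{f}_i$ not to be a nonnegative scalar multiple of $\diag{f}_j$ (nor conversely). Setting $f=g$ in the identity $(n-1)\langle\diag{f},\diag{g}\rangle=n|\langle f,g\rangle|^2-\|f\|^2\|g\|^2$ gives $\|\diag{f}\|=\|f\|^2$, so $\diag{f}_i$ and $\diag{f}_j$ are both nonzero. Then, assuming toward a contradiction that $\diag{f}_i=c\,\diag{f}_j$ with $c>0$, comparing norms gives $\|f_i\|^2=c\|f_j\|^2$, and substituting $\diag{f}_i=c\,\diag{f}_j$ back into the same identity and simplifying yields $|\langle f_i,f_j\rangle|^2=c\|f_j\|^4=\|f_i\|^2\|f_j\|^2$; the equality case of Cauchy--Schwarz then makes $f_i$ and $f_j$ collinear, a contradiction. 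The case $\diag{f}_j=c\,\diag{f}_i$ with $c>0$ is identical.

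It remains to assemble a general-position argument. With $\diag{f}_i,\diag{f}_j$ nonzero and $\diag{f}_i$ not a positive multiple of $\diag{f}_j$, the set $U=\{\tau\in\R^{n(n-1)}:\langle\diag{f}_i,\tau\rangle>0>\langle\diag{f}_j,\tau\rangle\}$ is nonempty — immediate when $\diag{f}_i,\diag{f}_j$ are linearly independent, and when they are dependent they must be anti-parallel, so any $\tau$ with $\langle\diag{f}_j,\tau\rangle<0$ works — and $U$ is open, hence of positive Lebesgue measure. The set $Z=\bigcup_{\ell\in I}\{\tau:\langle\diag{f}_\ell,\tau\rangle=0\}$ is a finite union of hyperplanes (each $\diag{f}_\ell\neq0$), hence Lebesgue-null, so I may pick $\tau\in U\setminus Z$. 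By Proposition \ref{linesgetsignings} the map $\ell\mapsto\sign(\langle\diag{f}_\ell,\tau\rangle)$ is a signing of $F$, hence of $P$, and it assigns $+$ to $i$ and $-$ to $j$; this contradicts the assumption that every signing of $P$ agrees on $i$ and $j$. Therefore $f_i$ and $f_j$ are collinear. Everything outside the norm and Cauchy--Schwarz computation of the second paragraph is routine.
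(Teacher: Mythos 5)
Your proof is correct and follows essentially the same route as the paper's: argue by contraposition, observe that non-collinearity of $f_i$ and $f_j$ prevents their diagram vectors from being positive multiples of one another, and then choose a generic separating hyperplane to produce via Proposition \ref{linesgetsignings} a signing with $\sigma(i)\neq\sigma(j)$. The only difference is that you carefully justify the step the paper asserts without proof (that non-collinear frame vectors have non-parallel diagram vectors, via the norm identity and the equality case of Cauchy--Schwarz), which is a welcome addition but not a different argument.
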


\begin{proof}
If there exists a frame with $f_i$ and $f_j$ not collinear, then the diagram vectors $\diag{f_i}$ and $\diag{f_j}$ will not be parallel.  Thus there exists some hyperplane with $\diag{f_i}$ and $\diag{f_j}$ on opposite sides and which does not contain any of the diagram vectors.  This produces a signing $\sigma$ with $\sigma(i) \neq \sigma(j)$.  The result follows by contraposition.  
\end{proof}

\begin{corollary} \label{cor:antiparasign}
Let $P$ be a poset and $i,j$ two of its indices.  If $\sigma(i) \neq \sigma(j)$ for every signing $\sigma$ of $P$ then every frame $\fI$ with $P$ as its factor poset will have $\diag{f_i}$ and $\diag{f_j}$ antiparallel. 
\end{corollary}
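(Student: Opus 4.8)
The plan is to argue by contraposition, in exact parallel with the proof of Corollary~\ref{cor:parasign}. Assume there exists a frame $\fI$ with factor poset $P$ such that $\diag{f_i}$ and $\diag{f_j}$ are \emph{not} antiparallel; I will exhibit a signing $\sigma$ of $P$ with $\sigma(i)=\sigma(j)$, contradicting the hypothesis. Since (by the earlier observation) the set of signings of $F$ equals the set of signings of $P$, it suffices to produce such a signing of $F$, and Proposition~\ref{linesgetsignings} tells me how: I should find a vector $\tau\in\R^{n(n-1)}$, orthogonal to no diagram vector of $F$, with $\inpro{\diag{f_i},\tau}$ and $\inpro{\diag{f_j},\tau}$ of the same sign.

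The first step is a small convex-geometry observation: for nonzero vectors $u,v\in\R^{n(n-1)}$, there is a $\tau$ with $\inpro{u,\tau}>0$ and $\inpro{v,\tau}>0$ unless $v$ is a negative scalar multiple of $u$, i.e.\ unless $u$ and $v$ are antiparallel. When $u,v$ are linearly independent one chooses $\tau$ in $\Span\{u,v\}$ lying in the (nonempty) intersection of the two open half-spaces $\{x:\inpro{x,u}>0\}$ and $\{x:\inpro{x,v}>0\}$ and extends by $0$ on the orthogonal complement; when $v=cu$ with $c>0$ one takes $\tau=u$; the remaining case $v=cu$ with $c\le 0$ cannot occur here because diagram vectors of nonzero frame vectors are nonzero. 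Applying this with $u=\diag{f_i}$, $v=\diag{f_j}$, and using that these are not antiparallel, I obtain a nonempty open set of vectors $\tau$ with both inner products strictly positive.

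Finally, the set of $\tau$ orthogonal to some diagram vector of $F$ is a finite union of hyperplanes and so has empty interior; intersecting its complement with the open set from the previous step, I can choose $\tau$ with $\inpro{\diag{f_i},\tau}>0$, $\inpro{\diag{f_j},\tau}>0$, and $\inpro{\diag{f_m},\tau}\ne 0$ for every $m\in I$. By Proposition~\ref{linesgetsignings} the map $\sigma$ given by $m\mapsto\sign(\inpro{\diag{f_m},\tau})$ is a signing of $F$, hence of $P$, and $\sigma(i)=+=\sigma(j)$. This contradicts the assumption that $\sigma(i)\ne\sigma(j)$ for every signing $\sigma$, which completes the contrapositive.

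The only step requiring care --- and the one I would regard as the heart of the argument --- is the convex-geometry observation that two nonzero vectors admit a common strictly-positive functional unless they are antiparallel; one must handle the linearly dependent and zero cases explicitly. The genericity argument avoiding the finitely many ``bad'' hyperplanes and the translation between signings of $F$ and of $P$ are routine and are already used verbatim in the preceding corollaries.
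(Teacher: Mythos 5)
Your proposal is correct and follows essentially the same route as the paper: contraposition, splitting off the parallel (positive-multiple) case, observing that for non-antiparallel, non-collinear diagram vectors the intersection of the two open half-spaces $\{x:\langle x,\diag{f}_i\rangle>0\}\cap\{x:\langle x,\diag{f}_j\rangle>0\}$ is nonempty and open, and then removing the finitely many hyperplanes $\diag{f}_m^\perp$ to invoke Proposition~\ref{linesgetsignings}. The only cosmetic difference is that you first locate $\tau$ inside $\Span\{\diag{f}_i,\diag{f}_j\}$ before perturbing, whereas the paper works directly with the full-dimensional polyhedron.
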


Here by antiparallel we mean the following: $\diag{f}_i$ and $\diag{f}_j$ are antiparallel iff there exists some $\alpha<0$ so that $\diag{f}_i = \alpha \diag{f}_j$. We now proceed with the proof.

\begin{proof}
First observe that $\diag{f}_i$ and $\diag{f}_i$ cannot be parallel, where by parallel we mean that one is a positive scalar multiple of the other. If this were the case then certainly there is a signing with $\sigma(i)=\sigma(j)$. We now proceed with the proof of the claim. Assume there exists a frame $\fI$ with $\diag{f_i}$ and $\diag{f_j}$ not collinear. Consider the set
$$A = \{x\in\R^{n(n-1)}:\langle x,\diag{f}_i\rangle >0\}\cap \{x\in\R^{n(n-1)}:\langle x,\diag{f}_j\rangle >0\},$$
a full-dimensional unbounded (open) polyhedron in $\R^{n(n-1)}$ (see \cite{ziegler}). Because $A$ is full-dimensional, $A\setminus\bigcup_{i\in I}\diag{f}_i^\perp$ is non-empty, and hence any $\tau$ in this set which is not orthogonal to any diagram vectors gives a signing $\sigma$ with $\sigma(i) = \sigma(j)$.  The result follows by contraposition.   
\end{proof}

\begin{corollary} \label{collinearsign}
Let $P$ be a poset, and let $F$ be a frame with $P$ as its factor poset.  If there exists a unique signing for $P$ (up to an overall change of sign), then all the diagram vectors for $F$ must be collinear.  Equivalently, all of the vectors in $F$ lie along two orthogonal lines.  
\end{corollary}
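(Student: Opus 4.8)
The plan is to run Corollaries \ref{cor:parasign} and \ref{cor:antiparasign} over all pairs of indices, after first disposing of degeneracies. If $n=1$ every vector is collinear trivially, so take $n\geq 2$; then any frame has $|I|\geq 2$, and if $P=\{\emptyset\}$ then every function $I\to\{+,-\}$ is a signing of $P$, contradicting uniqueness, so $P$ has a nonempty element and hence (since that element contains an index of each sign) both sign classes $I^+=\sigma^{-1}(+)$ and $I^-=\sigma^{-1}(-)$ are nonempty, where $\sigma$ is the signing. By hypothesis every signing of $P$ is $\sigma$ or its pointwise negation, so for indices $i,j$ in the same class we have $\sigma'(i)=\sigma'(j)$ for \emph{every} signing $\sigma'$, while for $i,j$ in opposite classes $\sigma'(i)\neq\sigma'(j)$ for every signing. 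In the first case Corollary \ref{cor:parasign} makes $f_i,f_j$ collinear, and since $\diag{cf}=|c|^2\,\diag{f}$ straight from the definition of the diagram vector, $\diag{f_i}$ and $\diag{f_j}$ are positive multiples of one another; in the second case Corollary \ref{cor:antiparasign} makes $\diag{f_i},\diag{f_j}$ antiparallel. Either way, fixing an index $i_0$, every $\diag{f_i}$ is a nonzero scalar multiple of $\diag{f_{i_0}}$ (nonzero since $\|\diag{f_i}\|=\|f_i\|^2\neq 0$), so all diagram vectors of $F$ are collinear, which is the first assertion.

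For the stated equivalence the case with genuine content is $H_2=\R^2$, where $\diag{f}=r^2(\cos 2\phi,\sin 2\phi)$ for $f=(r\cos\phi,r\sin\phi)$, so the diagram map doubles angles: two nonzero planar vectors have collinear diagram vectors exactly when they are themselves collinear or orthogonal. Hence, having shown all diagram vectors collinear, fixing $f_1$ puts every $f_i$ on one of the two orthogonal lines $\Span\{f_1\}$, $\Span\{f_1\}^{\perp}$; conversely, if all $f_i$ lie on two orthogonal lines then after a rotation they are scalar multiples of $e_1$ or $e_2$, and the identity $(n-1)\langle\diag{f},\diag{g}\rangle=n|\langle f,g\rangle|^2-\|f\|^2\|g\|^2$ gives $\cos\angle(\diag{f},\diag{g})=\pm 1$ for every such pair, so the diagram vectors are collinear. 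For $n\geq 3$ the statement is in fact vacuous: an opposite-sign pair exists, so Corollary \ref{cor:antiparasign} would force two diagram vectors of nonzero vectors to be antiparallel, yet the same identity gives $\cos\angle(\diag{f},\diag{g})=\tfrac{n\,t-1}{n-1}\geq -\tfrac{1}{n-1}>-1$ with $t=|\langle f,g\rangle|^2/(\|f\|^2\|g\|^2)\in[0,1]$; hence no frame in $\R^{\geq 3}$ has a factor poset with a unique signing.

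The main obstacle is conceptual rather than computational: one must recognize that Corollary \ref{cor:antiparasign} has geometric teeth only in $\R^2$ and that ``two orthogonal lines'' is precisely the right picture there (a spanning set cannot fit in two lines at all once $n\geq 3$), so the apparent generality of the statement collapses to the planar case plus a vacuous tail. Once the $n=2$ versus $n\geq 3$ split is made up front, everything else is a short chain of applications of the two preceding corollaries together with the angle-doubling identity for the diagram map.
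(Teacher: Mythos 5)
Your argument for the main claim is the same as the paper's: partition the indices by the unique signing and apply Corollaries \ref{cor:parasign} and \ref{cor:antiparasign} pairwise to conclude that all diagram vectors are collinear. Your additions are correct and go beyond what the paper records --- the angle-doubling verification of the ``two orthogonal lines'' equivalence in $\R^2$, and the observation (via $\cos\angle(\diag{f},\diag{g})\geq -\tfrac{1}{n-1}>-1$) that the hypothesis is vacuous for $n\geq 3$ --- but the core proof is the paper's.
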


\begin{proof}
All of the diagram vectors for elements of $F$ can be divided into two groups $A$ and $B$, so that $\diag{f_i}$ is in $A$ if $\sigma(i) = +$ and in $B$ otherwise.  By Corollaries \ref{cor:parasign} and \ref{cor:antiparasign}, we know that all of the diagram vectors in $A$ are parallel to each other, all those in $B$ are parallel to each other, and all diagram vectors in $A$ are antiparallel to those diagram vectors in $B$.  Taken together, this means that every diagram vector for $F$ is collinear with every other diagram vector for $F$.  
\end{proof}

\begin{corollary} \label{cor:signspark}
Let $P$ be a poset.  If there exist two indices $i,j$ in the index set $I$ for $P$ such that every signing $\sigma$ of $P$ has $\sigma(i) = \sigma(j)$, then any frame $F$ with $P$ as its factor poset has spark at most $2$.  
\end{corollary}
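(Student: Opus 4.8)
The plan is to derive this as an essentially immediate consequence of Corollary~\ref{cor:parasign}. Suppose $P$ has two distinct indices $i,j \in I$ such that $\sigma(i) = \sigma(j)$ for every signing $\sigma$ of $P$, and let $F = \fI$ be any frame whose factor poset is $P$. Corollary~\ref{cor:parasign} applies verbatim to this hypothesis and yields that $f_i$ and $f_j$ must be collinear. The only remaining work is to translate ``two collinear frame vectors'' into a statement about the spark.

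To finish, recall that throughout this section we consider only frames with no zero vectors, so $f_i$ and $f_j$ are both nonzero; being collinear, they are scalar multiples of one another, and hence the two-element subset $\{f_i, f_j\} \subseteq F$ (which genuinely has two members since $i \neq j$) is linearly dependent. By the definition of spark as the size of the smallest linearly dependent subset, it follows that $F$ has spark at most $2$. Since $F$ was an arbitrary frame with factor poset $P$, this proves the corollary.

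I do not anticipate any real obstacle here; the statement is purely a repackaging of Corollary~\ref{cor:parasign}. The only two points worth making explicit are (i) that ``two indices $i,j$'' is read as $i \neq j$, so that $\{f_i,f_j\}$ is a genuine two-element subset, and (ii) that the standing convention of this section (frames without zero vectors) guarantees $f_i,f_j$ are nonzero, so collinearity really does force linear dependence of a size-$2$ set rather than being vacuous.
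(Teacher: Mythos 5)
Your proposal is correct and follows essentially the same route as the paper: both invoke Corollary~\ref{cor:parasign} to conclude that $f_i$ and $f_j$ are collinear and then read off that the spark is at most $2$. Your added remarks about $i\neq j$ and the no-zero-vector convention are harmless clarifications of what the paper leaves implicit.
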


\begin{proof}
By Corollary \ref{cor:parasign}, the diagram vectors for $f_i$ and $f_j$ are parallel.  This can only happen if $f_i$ and $f_j$ are collinear, so the spark of $F$ is less than or equal to 2.  
\end{proof}

\subsection{Projections of factor posets}\label{sect:projections}

Here we address how factor poset structure is preserved under (orthogonal) projections. The necessity of span-closure for the inverse problem (Theorem \ref{thm:inverse}) along with its sufficiency for $\R^2$ gives the following result:

\begin{proposition}\label{prop:nto2Reduct}
Given a frame $F=\fI\sub H_n$ there exists a frame $G=\gI\sub \R^2$ such that $F$ and $G$ have the same factor poset.
\end{proposition}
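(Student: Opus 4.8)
The plan is to combine the two main results already established about the inverse factor poset problem. Given a frame $F=\fI\sub H_n$ with no zero vectors, let $P=\fpF$ be its factor poset. The key observation is that $P$ is a poset ordered by inclusion which contains no singletons (a singleton $\{i\}\in P$ would force $\diag{f}_i=0$, hence $f_i=0$, contradicting our standing assumption that frames contain no zero vectors). Therefore $P$ is eligible as input to Theorems \ref{thm:inverse} and \ref{thm:r2inverse}.

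First I would invoke Theorem \ref{thm:inverse}: since $F\sub\Hnwozero$ is an actual frame realizing $P$ as its factor poset, the theorem's conclusion applies and $P$ is span-closed, i.e., $[J]\notin\spandex{P}$ for every $J\in 2^I\setminus P$. Next I would apply Theorem \ref{thm:r2inverse} in the reverse direction: since $P$ is a singleton-free poset ordered by inclusion which is span-closed, there exists a frame $G=\gI\sub H_2\setminus\{0\}$ with factor poset exactly $P$. Then $\fpos{G}=P=\fpos{F}$, as desired. (If one wants $G$ specifically in $\R^2$ rather than $\C^2$, this is immediate since the construction in Theorem \ref{thm:r2inverse} produces a real frame; alternatively one uses Remark \ref{rmk:complexC2case}.)

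There is essentially no obstacle here — the proposition is a direct corollary, and the only point requiring a word of care is the edge case where $P=\{\emptyset\}$ (i.e., $F$ has no tight subframes at all). In that case $\spandex{P}=\{0\}$, span-closure is vacuous, and Theorem \ref{thm:r2inverse} still produces a suitable $G\sub\R^2$; one may also simply exhibit such a $G$ directly, e.g., by taking $|I|$ generic vectors in $\R^2$ no nonempty subcollection of which is tight, which exists by the genericity argument in the proof of Theorem \ref{thm:r2inverse}. Thus the proof reduces to citing Theorems \ref{thm:inverse} and \ref{thm:r2inverse} in sequence, with the only substantive content being the verification that $\fpF$ contains no singletons.
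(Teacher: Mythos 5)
Your proposal is correct and follows exactly the route the paper intends: the paper states this proposition as an immediate consequence of the necessity of span-closure (Theorem \ref{thm:inverse}) combined with its sufficiency in $\R^2$ (Theorem \ref{thm:r2inverse}), giving no further proof. Your added checks --- that $\fpF$ contains no singletons under the standing no-zero-vectors assumption, and the trivial case $P=\{\emptyset\}$ --- are sensible bookkeeping but do not change the argument.
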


Therefore, it is natural to inquire whether given a factor poset for a frame $F$ in $H_n$, for all $\ell\in\N$ with $2\leq \ell\leq n$ does there exist a frame $G\sub H_\ell$ with the same factor poset as $F$? Here we answer in the affirmative by way of studying the structure of non-tight frames under projections. The properties of projections of frames have been studied in several different contexts \cite{project,memoir}. The projection of any tight frame is tight, and if $F$ is a frame with frame bounds $0<A\leq B<\infty$, then any projection $\pi F$ is a frame with lower frame bound at least $A$ and upper frame bound at most $B$ (the bounds $A$ and $B$ are valid frame bounds, although they may not be optimal after projection). For example, projecting the (non-tight) frame $\{e_1,e_2,3e_3\}\sub \R^3$ onto $e_3^\perp$ gives a tight frame for $\R^2$, whereas projecting onto $e_2^\perp$ does not give a tight frame. 

Consider the problem of projecting a frame in $H_n$ to a frame in $H_{n-1}$ with the same factor poset structure. Any tight subframe $\fJ\sub\fI$ will certainly remain tight. To project via $\pi$ to a frame $\pi F\sub H_{n-1}$ with the same factor poset we must also have that $\pfJ$ is not tight (in $H_{n-1}$) whenever $\fJ$ is not tight (in $H_n$). Therefore, we must inquire when a non-tight frame is projected to a non-tight frame. For a frame $F=\fI\sub H_n$ with frame operator $S$, a natural function to study is
$$\Lambda_F(x) :=\sum_{i\in I} |\langle x,f_i\rangle|^2=\langle Sx,x\rangle\quad\text{ for } x\in \Us^{n-1}=\{x\in H_n:\|x\|_2=1\},$$
since $\min_{x\in \Us^{n-1}}\Lambda_F(x)$ and $\max_{x\in \US^{n-1}} \Lambda_F(x)$ are the optimal lower and upper frame bounds, respectively, for a frame $F$. In the following proposition we use the structure of this function to answer our desired question on factor-poset-preserving projections. The results are slightly different whether $H_n$ is $\R^n$ or $\C^n$. Our proof uses an interlacing inequality on eigenvalues of projections.

\begin{lemma}[\cite{project}]\label{lemma:projectionInterlacing}
Let $\fI\sub H_n$ be a frame with frame operator $S$ and $\pi$ be a rank $\ell\leq n$ orthogonal projection on $H_n$. Let $\lambda_1\geq \lambda_2\geq\cdots\geq \lambda_n$ be the eigenvalues for $S$ and let $\mu_1\geq \mu_2\geq\cdots\geq \mu_\ell$ be the eigenvalues for the frame operator $\pi S\pi$ of $\{\pi f_i\}_{i\in I}$. Then for all $1\leq i \leq \ell$ we have
$$\lambda_i \geq \mu_i \geq\lambda_{n-\ell+i}.$$
In particular, when $\pi$ is a rank $n-1$ projection, we have
$$\lambda_1\geq \mu_1 \geq \lambda_2 \geq\mu_2\geq \cdots \geq \lambda_{n-1}\geq \mu_{n-1}\geq\lambda_n.$$
\end{lemma}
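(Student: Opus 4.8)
The plan is to recognize the stated inequalities as an instance of the Cauchy interlacing theorem — equivalently, of the Courant--Fischer min--max principle — applied to a compression of the frame operator $S$. First I would identify the frame operator of $\{\pi f_i\}_{i\in I}$: writing $W = \ran\pi$, which is an $\ell$-dimensional subspace of $H_n$, one checks that for $x\in W$
$$\sum_{i\in I}\langle x,\pi f_i\rangle\,\pi f_i = \pi\Bigl(\sum_{i\in I}\langle x,f_i\rangle f_i\Bigr) = \pi S\pi\, x,$$
using $\pi^*=\pi$ and $\pi x=x$. Hence the frame operator of $\{\pi f_i\}_{i\in I}$, viewed as a frame for $W$, is exactly the compression $\pi S\pi|_W$, its quadratic form is the restriction of $x\mapsto\langle Sx,x\rangle$ to $W$, and its eigenvalues are the $\mu_1\geq\cdots\geq\mu_\ell$ in the statement (the remaining $n-\ell$ eigenvalues of $\pi S\pi$ on all of $H_n$ being $0$, since $\ker\pi$ lies in the kernel of $\pi S\pi$).

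Next I would invoke Courant--Fischer for the self-adjoint operator $S$ on $H_n$ and, separately, for $\pi S\pi|_W$ on $W$, obtaining
$$\lambda_i = \max_{\substack{V\sub H_n\\ \dim V = i}}\ \min_{\substack{x\in V\\ \|x\|=1}}\langle Sx,x\rangle,\qquad \mu_i = \max_{\substack{V\sub W\\ \dim V = i}}\ \min_{\substack{x\in V\\ \|x\|=1}}\langle Sx,x\rangle .$$
The inequality $\mu_i\leq\lambda_i$ is then immediate, since the maximum defining $\mu_i$ ranges over a subfamily of the subspaces allowed in the maximum defining $\lambda_i$. For $\mu_i\geq\lambda_{n-\ell+i}$ I would use a dimension count: given any $V\sub H_n$ with $\dim V = n-\ell+i$, we have $\dim(V\cap W)\geq (n-\ell+i)+\ell-n = i$, so $V\cap W$ contains some $i$-dimensional subspace $V'\sub W$; then $\mu_i\geq\min_{x\in V',\,\|x\|=1}\langle Sx,x\rangle\geq\min_{x\in V,\,\|x\|=1}\langle Sx,x\rangle$, and taking the maximum over all such $V$ yields $\mu_i\geq\lambda_{n-\ell+i}$.

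Finally, the ``in particular'' statement is just the special case $\ell = n-1$: the general bound becomes $\lambda_i\geq\mu_i\geq\lambda_{i+1}$ for $i=1,\dots,n-1$, and concatenating these chains gives the displayed alternating inequality. I do not anticipate a genuine obstacle here, since this is a classical fact; the only points that require care are the correct bookkeeping of eigenvalue indices in the interlacing bound and the remark that the min--max argument goes through verbatim over $\R$ or $\C$ because $S$ is self-adjoint and positive in both settings. Alternatively, one may simply cite \cite{project}, or reduce directly to the Cauchy interlacing theorem by expressing $\pi S\pi|_W$ as a principal submatrix of $S$ in an orthonormal basis of $H_n$ adapted to $W$.
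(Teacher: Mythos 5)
Your argument is correct and complete. Note that the paper itself offers no proof of this lemma --- it is quoted directly from \cite{project} --- so there is nothing internal to compare against; your reduction of the frame operator of $\{\pi f_i\}_{i\in I}$ to the compression $\pi S\pi|_{\ran\pi}$ followed by the Courant--Fischer min--max argument (with the dimension count $\dim(V\cap W)\geq \dim V+\dim W-n$ for the lower bound) is exactly the standard Cauchy-interlacing proof one would expect, and the index bookkeeping in the case $\ell=n-1$ checks out.
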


\begin{lemma}\label{lemma:projectSameFrameBound}
Suppose $F=\fI\sub H_n$ is a frame, where $n\geq 3$, and let $\pi,\rho:H_n\to H_n$ be two rank $n-1$ projections. If $\pi F$ and $\rho F$ are both tight frames for $\pi(H_n)$ and $\rho(H_n)$, respectively,  then they have the same frame bound.
\end{lemma}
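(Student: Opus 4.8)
The plan is to invoke the eigenvalue interlacing inequality of Lemma~\ref{lemma:projectionInterlacing} for a rank $n-1$ projection and observe that it pins the frame bound of a tight projection $\pi F$ to a ``middle'' eigenvalue of the frame operator $S$ of $F$ --- a quantity that depends on $F$ alone and not on the choice of the projection.

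First I would let $S$ be the frame operator of $F$, with eigenvalues $\lambda_1 \ge \lambda_2 \ge \cdots \ge \lambda_n > 0$. Applying Lemma~\ref{lemma:projectionInterlacing} to the rank $n-1$ projection $\pi$, the frame operator $\pi S\pi$ of $\pi F$, regarded as a frame for the $(n-1)$-dimensional space $\pi(H_n)$, has eigenvalues $\mu_1 \ge \mu_2 \ge \cdots \ge \mu_{n-1}$ satisfying
$$\lambda_1 \ge \mu_1 \ge \lambda_2 \ge \mu_2 \ge \cdots \ge \lambda_{n-1} \ge \mu_{n-1} \ge \lambda_n.$$
Since $\pi F$ is tight, its frame operator is a scalar multiple of the identity on $\pi(H_n)$, so $\mu_1 = \mu_2 = \cdots = \mu_{n-1} = A_\pi$, where $A_\pi$ denotes the frame bound of $\pi F$. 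Because $n \ge 3$, the index set $\{2,\ldots,n-1\}$ is nonempty, and for each such index $i$ the interlacing string gives $A_\pi = \mu_{i-1} \ge \lambda_i \ge \mu_i = A_\pi$, whence $\lambda_2 = \lambda_3 = \cdots = \lambda_{n-1} = A_\pi$.

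Next I would run the identical argument with $\rho$ in place of $\pi$: tightness of $\rho F$ for $\rho(H_n)$ forces $\lambda_2 = \lambda_3 = \cdots = \lambda_{n-1} = A_\rho$, where $A_\rho$ is the frame bound of $\rho F$. Comparing the two conclusions yields $A_\pi = \lambda_2 = A_\rho$, which is exactly the claim. I do not expect a genuine obstacle here; the only points needing a line of care are that the frame bound of a tight frame equals the common eigenvalue of its frame operator (immediate from the characterization of $\lambda$-tight frames by $S = \lambda I$), and that the hypothesis $n \ge 3$ is precisely what makes an eigenvalue $\lambda_2$ available to serve as the forced common value --- for $n = 2$ the projection $\pi$ has rank $1$, every nonzero vector is a tight frame for a line, and the conclusion genuinely fails.
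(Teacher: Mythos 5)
Your proof is correct, but it takes a genuinely different route from the paper's. The paper argues directly with the quadratic form $\Lambda_F(x)=\langle Sx,x\rangle$: it observes that $\Lambda_{\pi F}=\Lambda_F$ on the unit sphere of $K_\pi=\pi(H_n)$, so tightness of $\pi F$ is equivalent to $\Lambda_F$ being constant on $K_\pi\cap \Us^{n-1}$, and then uses the fact that two $(n-1)$-dimensional subspaces of $H_n$ with $n\geq 3$ intersect in a subspace of positive dimension, forcing the two constants to agree on a common unit vector. You instead invoke the interlacing inequality of Lemma~\ref{lemma:projectionInterlacing} to pin the common eigenvalue of $\pi S\pi$ to $\lambda_2=\cdots=\lambda_{n-1}$, a quantity intrinsic to $F$. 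Both arguments use $n\geq 3$ in an essential (but different) way --- the paper to guarantee a nontrivial intersection, you to guarantee a ``middle'' eigenvalue exists --- and your closing remark that the claim fails for $n=2$ is accurate. Your approach buys strictly more information: it identifies the forced common frame bound as $\lambda_2$ and shows $\lambda_2=\cdots=\lambda_{n-1}$ whenever even one tight rank $n-1$ projection exists, which is precisely the reduction the paper redoes from scratch inside the proof of Proposition~\ref{prop:nontightProjectReal}; the paper's proof of the lemma itself is slightly more elementary in that it needs no spectral input beyond the definition of the frame operator.
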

\begin{proof}
Note that for a  projection $\pi$ onto a $(n-1)$-dimensional subspace $K\sub H_n$ we have
$$\Lambda_{\pi F}(x) = \langle \pi S \pi x,x\rangle = \langle S\pi x,\pi x\rangle = \langle Sx,x\rangle = \Lambda_{F}(x)\text{\quad for }x\in \US^{n-1}\cap K.$$
From this observation we note that $\pi F$ is a tight frame for $K$ if and only if $\Lambda_{\pi F}$ is constant on $\US^{n-1}\cap K$ if and only if $\Lambda_F$ is constant on $\US^{n-1}\cap K$.

Now let $K_\pi = \pi(H_n)$ and $K_\rho = \rho(H_n)$, both of which are $n-1$ dimensional subspaces of $H_n$. Because $K_\pi$ and $K_\rho$ intersect nontrivially, there is a subspace $M=K_\pi\cap K_\rho\sub H_n$ of positive dimension for which $\Lambda_F$ is constant on $M\cap \US^{n-1}$. Because $\Lambda_F$ is constant on $M$, $K_\pi$, and $K_\rho$ (restricted to the unit sphere), we must have that $\Lambda_F$ is the same constant on $K_\pi$ and $K_\rho$. Hence, $\pi F$ and $\rho F$ have the same frame bound.
\end{proof}

We now prove our desired result on projections of non-tight frames.

\begin{proposition}\label{prop:nontightProjectReal}
Let $\fI\sub \R^n$ be a frame which is not tight, where $n\geq 3$. Then for all but at most two projections $\pi $ onto $(n-1)$-dimensional subspaces of $\R^n$, $\{\pi f_i\}_{i\in I}$ is not a tight frame for $\pi(\R^n)$. Equivalently, there are at most two rank $n-1$ projections for which $\{\pi f_i\}_{i\in I}$ is a tight frame for $\pi(\R^n)$.
\end{proposition}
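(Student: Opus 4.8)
The plan is to argue by contradiction. Suppose there exist three distinct rank $n-1$ projections $\pi_1,\pi_2,\pi_3$ for which $\{\pi_j f_i\}_{i\in I}$ is a tight frame for $\pi_j(\R^n)$. Every rank $n-1$ projection is the orthogonal projection onto $u^\perp$ for some unit vector $u$, determined up to sign, so I write $\pi_j=P_{u_j^\perp}$ where $u_1,u_2,u_3$ are pairwise non-collinear unit vectors. Since $n\geq 3$, Lemma~\ref{lemma:projectSameFrameBound} applies to each pair $\pi_j,\pi_k$, so $\pi_1F,\pi_2F,\pi_3F$ share a common frame bound $c$.

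The second step translates tightness of the projections into a statement about the frame operator $S$ of $F$. Exactly as in the proof of Lemma~\ref{lemma:projectSameFrameBound}, for $x\in u_j^\perp$ one has $\langle Sx,x\rangle=\langle \pi_j S\pi_j x,x\rangle=\Lambda_{\pi_jF}(x)$, so $\{\pi_jf_i\}$ being a tight frame for $u_j^\perp$ with bound $c$ is equivalent to $\langle Sx,x\rangle=c\|x\|^2$ for all $x\in u_j^\perp$. Set $B:=S-cI_n$. Then this condition says precisely that the quadratic form $q(x):=\langle Bx,x\rangle$ vanishes identically on the hyperplane $u_j^\perp$, for each $j=1,2,3$. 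Since $F$ is not tight, $S$ is not a scalar multiple of $I_n$, so $B\neq 0$ and $q$ is a nonzero quadratic form.

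The heart of the proof is a rigidity fact: a nonzero quadratic form $q(x)=\langle Bx,x\rangle$ that vanishes on a hyperplane $u^\perp$ must factor as $q(x)=\langle u,x\rangle\,\ell(x)$ for some nonzero linear form $\ell$. Indeed, in orthonormal coordinates with $u=e_n$, vanishing on $\{x_n=0\}$ forces $B_{ik}=0$ for all $i,k\leq n-1$, so $q(x)=x_n\left(2\sum_{i<n}B_{in}x_i+B_{nn}x_n\right)$, and the second factor is not identically zero because $q$ is not. Applying this to each $u_j$ yields factorizations $q(x)=\langle u_j,x\rangle\,\ell_j(x)$ into nonzero (hence irreducible) linear forms, $j=1,2,3$. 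Since $\R[x_1,\dots,x_n]$ is a unique factorization domain, these three factorizations agree up to nonzero scalar multiples, so each of the lines $\R u_1,\R u_2,\R u_3$ is one of the (at most two) lines spanned by the two irreducible linear factors of $q$. Hence two of the $u_j$ are collinear, contradicting our choice; therefore at most two rank $n-1$ projections make $F$ tight.

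The step I expect to be the crux is obtaining the \emph{common} frame bound $c$ in the first paragraph: without Lemma~\ref{lemma:projectSameFrameBound} the three hyperplane conditions would each carry their own constant and the clean factorization argument would not apply. (If one prefers to avoid the unique factorization language, the same conclusion follows from the identity $B=u(Bu)^{T}+(Bu)u^{T}-\langle Bu,u\rangle\, uu^{T}$, valid whenever $q$ vanishes on $u^\perp$ and $\|u\|=1$: it shows $\ran(B)\subseteq\Span\{u,Bu\}$, so $\rank(B)\leq 2$, and in the rank-two case, writing $u=\cos\theta\,q_1+\sin\theta\,q_2$ in an orthonormal eigenbasis $q_1,q_2$ of $B$ restricted to $\ran(B)$ with eigenvalues $\nu_1,\nu_2$, the vanishing condition becomes $\nu_1\sin^2\theta+\nu_2\cos^2\theta=0$, which has at most two solutions modulo $\pi$.)
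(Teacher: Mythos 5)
Your proof is correct, but it takes a genuinely different route from the paper's. The paper diagonalizes the frame operator $S$, invokes the eigenvalue interlacing inequality (Lemma~\ref{lemma:projectionInterlacing}) to force $\lambda_2=\cdots=\lambda_{n-1}$ and to pin the candidate frame bound to that common eigenvalue, and then explicitly solves $\Lambda_F(x)-\lambda=(\lambda_1-\lambda)|\langle x,\eta_1\rangle|^2-(\lambda-\lambda_n)|\langle x,\eta_n\rangle|^2=0$ to exhibit the at most two exceptional hyperplanes $\bigl(\eta_1\pm\sqrt{b/a}\,\eta_n\bigr)^\perp$; the case $n=3$ is handled separately using a result from the projections literature. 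You instead get the common constant $c$ directly from Lemma~\ref{lemma:projectSameFrameBound}, pass to $B=S-cI_n$, and use the rigidity of quadratic forms vanishing on hyperplanes: $q(x)=\langle Bx,x\rangle$ factors as $\langle u,x\rangle\,\ell(x)$, and unique factorization in $\R[x_1,\dots,x_n]$ (or equivalently your $\rank(B)\le 2$ identity) caps the number of admissible lines $\R u$ at two. Your argument is more uniform --- no case split between $n=3$ and $n\ge 4$, and no reliance on the interlacing lemma or the external $n=3$ result --- and it isolates cleanly why the common frame bound is the crux. What the paper's computation buys in exchange is an explicit spectral description of the exceptional subspaces, which is reused essentially verbatim to prove the complex analogue (Proposition~\ref{prop:nontightProjectComplex}); your UFD step is specific to real linear factors and would need the rank-two identity (or a Hermitian-form analogue) to extend to $\C^n$.
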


\begin{proof}
Let $S$ be the frame operator for $F=\fI\sub \R^n$. Since $F$ is not tight, $\Lambda_F$ is not constant on $\US^{n-1}$. By the observation in the proof of Lemma \ref{lemma:projectSameFrameBound}, to prove the desired claim it suffices to show there are only finitely many $(n-1)$-dimensional subspaces $K\sub \R^n$ so that $\Lambda_F$ is constant on $K\cap \US^{n-1}$. 

To proceed, let $\lambda_1\geq \lambda_2\geq \cdots\geq \lambda_n>0$ be the eigenvalues for $S$ with corresponding orthonormal eigenvectors $\{\eta_1,\ldots,\eta_n\}$. For $x\in \US^{n-1}$ we may write $x = \sum_{i=1}^n \langle x,\eta_i\rangle\eta_i$ with $1 = \sum_{i=1}^n |\langle x,\eta_i\rangle|^2$ by Parseval's identity. Therefore,
$$\Lambda_F(x) = \langle Sx,x\rangle = \sum_{i=1}^n \lambda_i |\langle x,\eta_i\rangle|^2.$$
We first consider the case where $n\geq 4$. Observe that if $\lambda_2 > \lambda_{n-1}$ then no rank $n-1$ projection of $F$ can result in a tight frame for $\pi(\R^n)$ by the interlacing inequality from Lemma \ref{lemma:projectionInterlacing} (note here why we restrict our attention to $n\geq 4$). Therefore for what remains we shall assume that $\lambda:=\lambda_2=\lambda_3=\cdots=\lambda_{n-1}$. If we require that $\Lambda_F$ is constant on $K\cap \US^{n-1}$ then by the interlacing lemma this constant must be equal to $\lambda$. Hence we consider the deviation of $\Lambda_F$ from $\lambda$:
\begin{align*}
\Lambda_F(x)-\lambda &= \sum_{i=1}^n \lambda_i |\langle x, \eta_i\rangle |^2 - \lambda\\
&=\sum_{i=1}^n \lambda_i |\langle x, \eta_i\rangle |^2 - \lambda\sum_{i=1}^n|\langle x,\eta_i\rangle|^2\\
&= (\lambda_1-\lambda)|\langle x,\eta_1\rangle|^2 - (\lambda-\lambda_n)|\langle x,\eta_n\rangle|^2.
\end{align*}
Let $a = \lambda_1-\lambda\geq0$ and $b = \lambda-\lambda_n\geq 0$. Note that since $F$ is not tight, at least one of $a$ and $b$ is nonzero. We now consider two cases:
\begin{enumerate}
\item Suppose $\lambda_1 > \lambda>\lambda_n$, i.e., $a>0$ and $b>0$. Then, if for $x\in \US^{n-1}$ we have $\Lambda_F(x) -\lambda  = 0$ this implies
$$|\langle x,\eta_1\rangle|^2 = \frac{b}{a}|\langle x,\eta_n\rangle|^2.$$
The set of $x\in \R^n$ satisfying this equation is the union of two codimension $1$ hyperplanes, namely $\left(\eta_1\pm \frac{a}{b}\eta_n\right)^\perp$, and hence there are two projections which result in tight frames. 

\item Suppose $\lambda_1 > \lambda = \lambda_n$, so $a>0=b$. Here
$$\Lambda_F-\lambda = a|\langle x,\eta_1\rangle|^2,$$
which can only equal zero for all $x\in \US^{n-1}\cap K$ where $\dim(K)=n-1$ when $K= \eta_1^\perp$. Therefore there is one and only one projection which results in a tight frame. The case for $a=0<b$ is essentially identical.
\end{enumerate}

In the case where $n=3$, we know by \cite[Prop. 3.5]{project} that there is a rank $2$ projection $\pi$ which makes $\pi F$ tight in $\pi(H_3)$ with frame bound $\lambda_2$. By the preceding lemma we know that any projection which is tight must therefore have frame bound $\lambda$. Therefore, as before we consider
$$\Lambda_F(x)-\lambda_2 =(\lambda_1-\lambda_2)|\langle x,\eta_1\rangle|^2 - (\lambda_2-\lambda_3)|\langle x,\eta_3\rangle|^2.$$
As before we have that the solutions either lie in two $2$-dimensional subspaces (when $\lambda_1>\lambda_2>\lambda_3$) or lie in one $2$-dimensional subspace (when $\lambda_1=\lambda_2$ or $\lambda_2=\lambda_3$).

The desired result follows, as there are 0, 1, or 2 rank $n-1$ projections of $\R^n$ for which $\pi F$ is tight in $\pi(\R^n)$.
\end{proof}

\begin{proposition}\label{prop:nontightProjectComplex}
Let $\fI\sub \C^n$ be a frame which is not tight, where $n\geq 3$. Let $V\sub \US^{n-1}\sub \C^n$ denote the set of unit vectors $f\in\C^n$ so that the rank $n-1$ projection $\pi_f$ onto $f^\perp$ results in a tight frame $\pi F$ for $\pi(\C^n)$. Then $V$ is entirely contained in a subspace of $\C^n$ of dimension $2$.
\end{proposition}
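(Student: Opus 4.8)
The plan is to follow the scheme of the proof of Proposition \ref{prop:nontightProjectReal}, but to replace its hyperplane-counting step with a short piece of complex linear algebra, since over $\C$ the analogue of the set ``$\Lambda_F=\lambda$'' inside a coordinate plane is a genuine cone that contains a whole family of complex lines. First I would let $S$ be the frame operator of $F$, with eigenvalues $\lambda_1\geq\cdots\geq\lambda_n>0$ and an orthonormal eigenbasis $\eta_1,\ldots,\eta_n$. Exactly as in the proof of Lemma \ref{lemma:projectSameFrameBound}, for a unit vector $f$ the projection $\pi_f$ onto $f^\perp$ makes $\pi_f F$ a tight frame for $f^\perp$ iff $\Lambda_F$ is constant on $\US^{n-1}\cap f^\perp$. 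By the interlacing inequality of Lemma \ref{lemma:projectionInterlacing}, such an $f$ can exist only if $\lambda_2=\lambda_3=\cdots=\lambda_{n-1}$ and then the common value of $\Lambda_F$ must be this $\lambda:=\lambda_2$; if the equality of eigenvalues fails (possible only when $n\geq4$) then $V=\emptyset$ and there is nothing to prove. Expanding $x$ in the eigenbasis and using Parseval gives
$$\Lambda_F(x)-\lambda\|x\|^2 \;=\; a\,|\langle x,\eta_1\rangle|^2 - b\,|\langle x,\eta_n\rangle|^2,\qquad a:=\lambda_1-\lambda\geq0,\quad b:=\lambda-\lambda_n\geq0,$$
where $a,b$ are not both zero because $F$ is not tight. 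Hence $f\in V$ if and only if $a\,|\langle x,\eta_1\rangle|^2 = b\,|\langle x,\eta_n\rangle|^2$ for all $x\in f^\perp$.

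The heart of the argument is to deduce from this that $f\in\Span_{\C}\{\eta_1,\eta_n\}$. I would introduce the $\C$-linear map $q:\C^n\to\C^2$, $q(x)=(\langle x,\eta_1\rangle,\langle x,\eta_n\rangle)$, which is surjective with $\ker q=\Span\{\eta_2,\ldots,\eta_{n-1}\}$ of complex dimension $n-2$, and the set $C=\{(z_1,z_2)\in\C^2:a|z_1|^2=b|z_2|^2\}$; then the condition above says precisely $q(f^\perp)\subseteq C$. Now $q(f^\perp)$ is a complex subspace of $\C^2$, and since $(1,0)\in C$ only if $a=0$ while $(0,1)\in C$ only if $b=0$, we have $C\neq\C^2$, so $\dim_{\C}q(f^\perp)\leq1$; it cannot be $0$, for then $f^\perp\subseteq\ker q$ would force $n-1\leq n-2$. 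Therefore $q(f^\perp)$ is a complex line $L\subseteq C$, and because $\dim_{\C}q^{-1}(L)=1+(n-2)=n-1=\dim_{\C}f^\perp$ together with $f^\perp\subseteq q^{-1}(L)$, we get $f^\perp=q^{-1}(L)$. Writing $L=\{w\in\C^2:\langle w,(u_1,u_2)\rangle=0\}$ for some nonzero $(u_1,u_2)$, a direct computation shows $q^{-1}(L)=(u_1\eta_1+u_2\eta_n)^\perp$, so $f$ is a scalar multiple of $u_1\eta_1+u_2\eta_n$. Thus $V\subseteq\Span_{\C}\{\eta_1,\eta_n\}$, a two-dimensional subspace.

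The main obstacle I anticipate is exactly the step just sketched: in contrast to the real case, $C$ contains infinitely many complex lines (for $a,b>0$ these are the lines $\C(\sqrt{b/a}\,e^{\mathrm{i}\phi},1)$), so $V$ need not be finite and the finitely-many-hyperplanes argument of Proposition \ref{prop:nontightProjectReal} cannot be copied; the resolution is the dimension count, which shows that whichever line $L\subseteq C$ occurs, $q^{-1}(L)$ is automatically a hyperplane whose normal lies in $\Span_{\C}\{\eta_1,\eta_n\}$. I would also check the degenerate cases $b=0$ and $a=0$ directly (there $C$ is already a coordinate line, $q(f^\perp)=C$, and one gets $f\in\C\eta_1$ or $f\in\C\eta_n$), and verify the $n=3$ bookkeeping separately, where $\lambda=\lambda_2$ is the sole ``interior'' eigenvalue and $\ker q=\Span\{\eta_2\}$ already has the needed dimension $n-2=1$, so the argument goes through verbatim.
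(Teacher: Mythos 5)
Your proposal is correct and follows essentially the same route as the paper: reduce via the eigenbasis and the interlacing inequality to the condition $a|\langle x,\eta_1\rangle|^2=b|\langle x,\eta_n\rangle|^2$ on $f^\perp$, and conclude that the admissible normals $f$ lie in $\Span\{\eta_1,\eta_n\}$. The only difference is that the paper simply describes the solution set as the union $\bigcup_{\delta}(\eta_1+e^{i\delta}\eta_n)^\perp$ and reads off $V$ as the set of unit normals, whereas your map $q$ and the dimension count supply an explicit justification for the step the paper leaves implicit, namely that a hyperplane contained in that (uncountable) union must coincide with one of its members.
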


\begin{proof}
The proof carries through almost entirely as before, except Case 1 (where $\lambda_1>\lambda_2=\cdots=\lambda_{n-1}>\lambda_n$) deviates from before. Consider again the equation of the form
$$|\langle x,\eta_1\rangle|^2 = c |\langle x,\eta_n\rangle |^2,$$
where $c>0$ is a real number. Without loss of generality we take $c=1$. The set of solutions $x\in\C^n$ to this equation is precisely
$$\bigcup_{\delta\in[0,2\pi)} \left(\eta_1+e^{i\delta}\eta_n\right)^\perp,$$
a union of hyperplanes. The collection $V$ in this case (where $\lambda_1>\lambda > \lambda_n$) is precisely
$$V = \left\{\frac{e^{i\delta_1}}{\sqrt{2}}\left(\eta_1+e^{i\delta_2}\eta_n\right):\delta_1,\delta_2\in[0,2\pi)\right\}.$$
Note the necessity of $e^{i\delta_1}$ since a rank $n-1$ projection determines a vector orthonormal to its image uniquely up to a scalar of modulus $1$. We have that $V$ is contained in $\Span\{\eta_1,\eta_2\}$, a two-dimensional subspace in $\C^n$. In Case 2 (where either $\lambda_1=\lambda_2$ or $\lambda_n = \lambda_2$), the same conclusion as before holds.
\end{proof}

\begin{remark}
Observe that since the set $V\cap \US^{n-1}$ lies in a subspace of dimension $2<n$, its Lebesgue measure on the sphere $\US^{n-1}$ is zero. Hence, with probability one, a rank $n-1$ projection chosen uniformly at random will project a non-tight frame to a non-tight frame. Therefore, given a frame $F\sub H_n$ we have that for almost all rank $n-1$ projections $\pi$ on $H_n$ that $\pi F$ has the same factor poset as $F$.
\end{remark}

An immediate consequence of these propositions is the main theorem for this subsection. This can be seen as a strengthening of Proposition \ref{prop:nto2Reduct}.

\begin{theorem}
Suppose $F=\fI\sub H_n$ is a frame. Then for any $2\leq\ell\leq n$ there exists some rank $\ell$ projection $\pi:H_n\to H_n$ such that $\pi F\sub \pi (H_n)$ has the same factor poset as $F$.
\end{theorem}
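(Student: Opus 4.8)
The plan is to prove this by induction on $n-\ell$, reducing the ambient dimension by one at each step. The case $\ell=n$ is trivial ($\pi=I_n$). For the inductive step it suffices to establish the following: given a frame $F=\fI\sub H_n$ with $n\geq 3$, there is a rank $n-1$ orthogonal projection $\rho$ on $H_n$ with $\fpos{\rho F}=\fpos{F}$ and with $\rho f_i\neq 0$ for every $i$. Granting this, $\rho F$ is a zero-vector-free frame for the $(n-1)$-dimensional Hilbert space $\rho(H_n)$, so for $\ell\leq n-1$ the inductive hypothesis applied inside $\rho(H_n)$ produces a rank $\ell$ orthogonal projection $\sigma$ of $\rho(H_n)$ with $\fpos{\sigma\rho F}=\fpos{\rho F}=\fpos{F}$; since $\sigma(\rho(H_n))\sub\rho(H_n)$, the composite $\sigma\rho$ is exactly the orthogonal projection $\pi$ of $H_n$ onto the $\ell$-dimensional subspace $\sigma(\rho(H_n))$, and $\pi F=\sigma\rho F$ has the required factor poset.

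To produce $\rho$, note first that since the projection of a tight frame is again a tight frame (and spanning is preserved), every $J\in\fpos{F}$ lies in $\fpos{\rho F}$ for \emph{every} rank $n-1$ projection $\rho$; thus the only thing to be avoided is having some $J\notin\fpos{F}$ enter $\fpos{\rho F}$. Fix such a $J$, put $W_J=\Span\{f_j:j\in J\}$, and let $\tilde S_J=\sum_{j\in J}\langle\cdot,f_j\rangle f_j$ be the (Hermitian, positive semidefinite) frame operator of the subframe $\{f_j\}_{j\in J}$ acting on all of $H_n$. Exactly as in the proof of Lemma \ref{lemma:projectSameFrameBound}, for a rank $n-1$ projection $\rho$ the collection $\{\rho f_j\}_{j\in J}$ is a tight frame for $\rho(H_n)$ if and only if it spans $\rho(H_n)$ and $x\mapsto\langle\tilde S_J x,x\rangle$ is constant on $\rho(H_n)\cap\US^{n-1}$. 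If $\dim W_J\leq n-2$ the spanning condition fails for every $\rho$, so there are no bad $\rho$. If $\dim W_J=n-1$ and $\{f_j\}_{j\in J}$ is tight for $W_J$, then $\tilde S_J$ has spectrum $\{\lambda,\ldots,\lambda,0\}$, and a short computation shows the quadratic form above is constant on $\rho(H_n)\cap\US^{n-1}$ only when $\rho$ is the projection onto $W_J$, so there is exactly one bad $\rho$. In every remaining case $\{f_j\}_{j\in J}$ is a non-tight frame for $W_J$, so $\tilde S_J$ has at least two distinct positive eigenvalues, and the arguments in the proofs of Propositions \ref{prop:nontightProjectReal} and \ref{prop:nontightProjectComplex} apply \emph{mutatis mutandis} to $\tilde S_J$ — their only ingredients are Cauchy interlacing (valid for any Hermitian operator) and an explicit quadratic-form identity, and a possible zero eigenvalue simply plays the role of the smallest one — to show that the bad $\rho$ form a measure-zero subset of the manifold of rank $n-1$ projections. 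Since $I$ is finite, there are only finitely many $J\notin\fpos{F}$, so the union of all the associated bad sets is still measure zero; enlarging it by the finitely many codimension-one conditions $f_i\in\ker\rho$ and choosing $\rho$ in the complement gives a rank $n-1$ projection with $\fpos{\rho F}=\fpos{F}$ and no zero vectors, completing the inductive step.

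The step I expect to be the main obstacle is the treatment of subframes $\{f_j\}_{j\in J}$ that do \emph{not} span $H_n$: Propositions \ref{prop:nontightProjectReal} and \ref{prop:nontightProjectComplex} are stated only for frames of the full space, so one must reopen their proofs and check that they still yield a negligible set of bad projections when the subframe operator $\tilde S_J$ is singular, and must separately dispose of the borderline case in which $\{f_j\}_{j\in J}$ happens to be a tight frame for an $(n-1)$-dimensional subspace. The remaining ingredients — invariance of tightness under projection, the handling of zero vectors, and assembling finitely many null sets — are routine, and the inductive packaging then delivers every rank $2\leq\ell\leq n$ at once.
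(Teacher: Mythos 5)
Your proof is correct and follows essentially the same route as the paper, which derives the theorem by iterating generic rank $n-1$ projections and citing Propositions \ref{prop:nontightProjectReal} and \ref{prop:nontightProjectComplex} together with the remark that the bad projections form a null set. Your additional care with subframes $\fJ$ that fail to span $H_n$ (and with avoiding $\rho f_i=0$) fills in details the paper leaves implicit in calling the theorem an ``immediate consequence,'' and your case analysis there is sound.
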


\begin{corollary}
Let $P\sub 2^I$ be a span-closed poset which contains no singletons and suppose $\{\emptyset\}\subsetneq P$. Then there exists some $m\in \N$, $m\geq 2$, so that an $H_\ell$ inverse frame exists for all $\ell \in\N$ with $2\leq \ell\leq m$ and no $H_\ell$ inverse frame exists for any $\ell > m$.
\end{corollary}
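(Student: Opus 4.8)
The plan is to analyze the set $L$ of integers $\ell\ge 2$ for which an $H_\ell$ inverse frame for $P$ exists --- that is, for which there is a frame $F\sub H_\ell\setminus\{0\}$ with $\fpos{F}=P$ --- and to show that $L=\{2,3,\ldots,m\}$ for some integer $m\ge 2$. It suffices to establish three facts: (i) $2\in L$; (ii) $L$ is downward contiguous, i.e.\ if $\ell\in L$ and $2\le\ell'\le\ell$ then $\ell'\in L$; and (iii) $L$ is bounded above. Granting these, $L$ is a non-empty bounded subset of $\{2,3,\ldots\}$, so $m:=\max L$ exists and $m\ge 2$ by (i); then (ii) gives $\{2,\ldots,m\}\sub L$ and maximality gives $L\sub\{2,\ldots,m\}$, whence $L=\{2,\ldots,m\}$. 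This is exactly the corollary: an $H_\ell$ inverse frame exists precisely for $2\le\ell\le m$.

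Facts (i) and (iii) are short. For (i): since $\{\emptyset\}\subsetneq P$ and $P$ has no singletons, $P$ contains a non-empty set, which has size at least $2$; in particular the index set $I=\{1,\ldots,k\}$ has $k\ge 2$. As $P$ is span-closed and has no singletons, Theorem \ref{thm:r2inverse} produces a frame $F\sub H_2\setminus\{0\}$ with $\fpos{F}=P$, so $2\in L$. For (iii): any frame indexed by $I$ has exactly $k$ vectors, and $k$ vectors span $H_\ell$ only if $\ell\le k$, so $L\sub\{2,\ldots,k\}$.

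For (ii), fix $\ell\in L$, a witnessing frame $F\sub H_\ell\setminus\{0\}$ with $\fpos{F}=P$, and $\ell'$ with $2\le\ell'\le\ell$. The Theorem immediately preceding this corollary, applied with ambient dimension $\ell$ and target rank $\ell'$, provides a rank $\ell'$ projection $\pi$ on $H_\ell$ for which $\pi F\sub\pi(H_\ell)$ has the same factor poset $P$ as $F$; since $\pi F$ spans the $\ell'$-dimensional space $\pi(H_\ell)$ it is a frame there, and it is an $H_{\ell'}$ inverse frame for $P$ --- so that $\ell'\in L$ --- \emph{provided} it contains no zero vector. Securing this proviso is the one genuinely delicate point, and the step I expect to take the most care in the write-up. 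The remedy is that the preceding Theorem is proved by selecting a \emph{generic} factor-poset-preserving projection (its proof resting on the projection results of Propositions \ref{prop:nontightProjectReal} and \ref{prop:nontightProjectComplex}), and one may additionally require that $\pi$ avoid the finitely many conditions $f_i\in\ker\pi$, $i\in I$; each of these cuts out a positive-codimension (hence measure-zero) subset of the Grassmannian of $\ell'$-dimensional subspaces of $H_\ell$, as each $f_i$ is nonzero. A projection avoiding all of them realizes the factor poset $P$ and keeps $\pi F$ zero-free, proving (ii) and with it the corollary. (If one reads the statement of the preceding Theorem --- which already refers to the factor poset of $\pi F$ --- as delivering a zero-free projected frame to begin with, then (ii) is immediate.)
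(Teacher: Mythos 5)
Your proposal is correct and follows essentially the same route the paper intends: the corollary is deduced from Theorem \ref{thm:r2inverse} (giving $2\in L$), the trivial bound that $k$ vectors cannot span more than $k$ dimensions, and the immediately preceding projection theorem (whose proof via Propositions \ref{prop:nontightProjectReal} and \ref{prop:nontightProjectComplex} selects a generic rank-$\ell'$ projection, so your extra genericity requirement that $\ker\pi$ avoid each nonzero $f_i$ is easily accommodated, exactly as you note). No gaps.
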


Hence, the largest possible dimension for the existence of an inverse frame is an intrinsic property of the factor poset. We believe that it may be possible to solve the general inverse problem by inquiring when it is possible to lift a frame in $\R^n$ to a frame in $\R^{n+1}$ with the same factor poset. This can be seen as restricted version of a dilation problem (see \cite{memoir} for commentary on dilations of frames). This problem remains open for future work. We conclude this section by considering a bound on the largest possible such dimension. For a poset $P\sub 2^I$ which is span-closed, let $\calD(P)$ denote the largest dimension $d$ for which a frame in $\R^d$ exists with factor poset $P$. In the following proposition we give a trivial bound on $\calD(P)$.

\begin{proposition}\label{prop:badBoundLift}
For a span-closed poset $P\sub 2^I$ we have that
$$\calD(P)\leq \min\{ |A|:A\in P, A\neq \emptyset\}.$$
\end{proposition}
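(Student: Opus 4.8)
The plan is to argue by contradiction via the structure of tight frames. Suppose $\calD(P) > \min\{|A| : A \in P, A \neq \emptyset\}$, and pick a nonempty $A \in P$ achieving this minimum, say $|A| = m$. By definition of $\calD(P)$, there is a frame $F = \fI \sub \R^d$ with factor poset $P$ for some $d = \calD(P) > m$. Since $A \in P$, the subcollection $\{f_j\}_{j \in A}$ is a tight frame for $\R^d$. But a tight frame for $\R^d$ must in particular be a spanning set for $\R^d$, hence must contain at least $d$ vectors. Thus $m = |A| \geq d > m$, a contradiction.

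So the core of the argument is simply the observation that any element of a factor poset indexes a tight frame, and a tight frame for $\R^d$ necessarily has at least $d$ elements (being a spanning set, by the frame-equals-spanning-set theorem quoted early in the paper). I would spell this out: if $|A| < d$, then $\{f_j\}_{j\in A}$ has fewer than $d$ vectors and cannot span $\R^d$, so it cannot be a frame, let alone a tight frame, contradicting $A \in P = \fpF$. Therefore every nonempty $A \in P$ satisfies $|A| \geq d$, i.e., $d \leq \min\{|A| : A \in P, A \neq \emptyset\}$, and since this holds for the dimension $d = \calD(P)$ realizing the maximum, the claimed bound follows. One should note the bound is vacuous/automatically fine in the degenerate case where $P = \{\emptyset\}$, since then the minimum is over the empty set; but the interesting content is when $P$ has a nonempty element, where the argument above applies directly.

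There is essentially no obstacle here — the proposition is labeled "trivial" for good reason. The only thing to be slightly careful about is the quantifier: $\calD(P)$ is defined as the \emph{largest} $d$ admitting such a frame, and we need the bound to hold for that largest $d$, which is immediate since the inequality $d \leq |A|$ holds for \emph{every} $d$ admitting an inverse frame and \emph{every} nonempty $A \in P$. I would write the proof in two or three sentences along exactly these lines, invoking only that a frame for $\R^d$ is a spanning set (hence has $\geq d$ elements) and the definition of the factor poset.

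\begin{proof}
If $P = \{\emptyset\}$ the right-hand side is a minimum over the empty set and there is nothing to prove, so assume $P$ contains a nonempty element. Let $d = \calD(P)$, so there exists a frame $F = \fI \sub \R^d$ with $\fpF = P$. Let $A \in P$ be any nonempty element. Since $A \in \fpF$, the subcollection $\{f_j\}_{j\in A}$ is a tight frame, hence a frame, for $\R^d$; therefore it is a spanning set for $\R^d$ and must contain at least $d$ vectors. Thus $|A| \geq d = \calD(P)$. As $A$ was an arbitrary nonempty element of $P$, we conclude $\calD(P) \leq \min\{|A| : A \in P,\ A \neq \emptyset\}$.
\end{proof}
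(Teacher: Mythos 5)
Your proof is correct and is exactly the argument the paper gives (the paper merely remarks ``This bound is true since every frame in $H_n$ contains at least $n$ vectors''): every nonempty $A\in P$ indexes a tight frame for $\R^d$, which must span $\R^d$ and hence satisfies $|A|\geq d$. You simply spell out the same observation in full detail.
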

This bound is true since every frame in $H_n$ contains at least $n$ vectors. However, this bound can be arbitrarily bad in the following sense.

\begin{example}
Here we show that for any $m\in\N$ there exists a frame in $\R^2$ with a factor poset $P$ so that (1) $\min\{ |A|:A\in P, A\neq \emptyset\}\geq m$ and (2) there exists no $\R^3$ frame with $P$ as its factor poset. In this sense, the bound in Proposition \ref{prop:badBoundLift} can be arbitrarily bad. Let $e_1,e_2\in\R^2$ be the standard ONB for $\R^2$. Fix $m\in\N$ and let $k\geq 2m+2$ be any even integer. Define the frame of $k$ vectors in $\R^2$ to be
$$F = \{f_i\}_{i=1}^k=\{\underbrace{e_1,\ldots,e_1}_{k-2 \text{ times}},-\sqrt{k/2-1}e_2,-\sqrt{k/2-1}e_2\}.$$
It is straightforward to verify that the factor poset $P$ for $F$ satisfies
$$\min \{|A|:A\in P,A\neq \emptyset\} = k/2-1\geq m.$$
Further, no $\R^3$ frame can have $P$ as its poset. Suppose $G=\{g_i\}_{i=1}^k\sub H_n$ is a frame with $P$ as its factor poset. Then using the poset structure alone we see that $\diag{g}_1,\ldots,\diag{g}_{k-2}$ must all be copies of one another, and hence each element in the empty cover $EC(P)$ corresponds to a frame with $k/2-1$ copies of one vector and one other vector. Hence, these correspond to frames with $2$-dimensional span, and therefore no $\R^3$ inverse can exist. Thus, $\calD(P)=2$.
\end{example}

One interesting question is to study how the function $\calD$ behaves under the intersection of posets. By earlier remarks on span-closure we know that if $P_1$ and $P_2$ are factor posets for frames in $H_n$ and $H_m$, respectively, then $P_1\cap P_2$ is the factor poset for a frame in $\R^2$, and hence $\calD(P_1\cap P_2)\geq 2$. We now present an example which highlights some of the behavior of $\calD$.

\begin{example}
Fix $I=\{1,\ldots,k\}$ with $k\geq 5$ and let $F=\fI,G=\gI\sub\R^2$ be two frames with respective factor posets
$$\fpF = \{\emptyset,\{1,2\},I\setminus \{1,2\},I\}\text{ and }\fpG = \{\emptyset,\{k-1,k\},I\setminus\{k-1,k\},I\}.$$
Such frames exist by \cite[Prop. 2.8]{prime} or by an elementary argument using diagram vectors. Note that $\calD(\fpF)=\calD(\fpG)=2$. However,
$$\calD(\fpF\cap \fpG) = \calD(\{\emptyset,I\}) =|I| = k.$$
Here we have implicitly used the existence of prime tight frames with $k$ vectors in $H_n$ for any $k\geq n$ \cite[Prop. 2.8]{prime}.
\end{example}

We believe that that while $\calD(P_1\cap P_2)$ cannot be bounded above easily, as hinted at in the previous example, the following lower bound holds.

\begin{conjecture}
Let $P_1$ and $P_2$ be two factor posets. Then
$$\calD(P_1\cap P_2)\geq \min\{\calD(P_1),\calD(P_2)\}.$$
\end{conjecture}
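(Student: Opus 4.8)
The plan is to realize $P_1\cap P_2$ as the factor poset of a ``direct sum'' of frames that realize $P_1$ and $P_2$ separately, reducing the verification to a pair of linear compatibility conditions. Assume without loss of generality $\calD(P_1)\le\calD(P_2)$ and put $d=\calD(P_1)$. Both $P_1$ and $P_2$ are span-closed and singleton-free (being factor posets, by Theorem~\ref{thm:inverse}), so their intersection is too; hence the cases $d=2$ and $P_1\cap P_2=\{\emptyset\}$ are handled immediately by Theorem~\ref{thm:r2inverse}, and we assume $d\ge 3$ and $\{\emptyset\}\subsetneq P_1\cap P_2$. It suffices to produce a frame with factor poset $P_1\cap P_2$ in \emph{any} dimension $\ge d$, since the main projection theorem of Section~\ref{sect:projections} then gives one in $\R^d$. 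Fix frames $F=\{f_i\}_{i\in I}\subseteq\R^{d_1}$ with $\fpos{F}=P_1$ and $G=\{g_i\}_{i\in I}\subseteq\R^{d_2}$ with $\fpos{G}=P_2$, where $2\le d_1\le\calD(P_1)$ and $2\le d_2\le\calD(P_2)$ are at our disposal (project a realization in $\R^{\calD(P_i)}$ down via the projection theorem), and set $H=\{(f_i,g_i)\}_{i\in I}\subseteq\R^{d_1+d_2}$.

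Next I would compute $\fpos{H}$. For $J\subseteq I$ the frame operator of $\{(f_j,g_j)\}_{j\in J}$ is the block matrix
\[
\begin{pmatrix}\sum_{j\in J}f_jf_j^{*} & \sum_{j\in J}f_jg_j^{*}\\ \sum_{j\in J}g_jf_j^{*} & \sum_{j\in J}g_jg_j^{*}\end{pmatrix}.
\]
If $J\notin P_1$ (resp.\ $J\notin P_2$) and $J\ne\emptyset$, the $(1,1)$ (resp.\ $(2,2)$) block is a nonzero non-scalar matrix, so the whole operator is not scalar and $J\notin\fpos{H}$; hence $\fpos{H}\subseteq P_1\cap P_2$. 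For $J\in P_1\cap P_2$ the diagonal blocks are $\lambda^F_J I_{d_1}$ and $\lambda^G_J I_{d_2}$ with $\lambda^F_J,\lambda^G_J>0$ the respective frame bounds, so the operator is a scalar multiple of $I_{d_1+d_2}$ exactly when $C_J:=\sum_{j\in J}f_jg_j^{*}=0$ and $\lambda^F_J=\lambda^G_J$. Thus $\fpos{H}=P_1\cap P_2$ if and only if these hold for all $J\in P_1\cap P_2$, and both conditions are linear along $\spandex{P_1\cap P_2}$: from $[J]=\sum_i\alpha_i[J_i]$ one gets $C_J=\sum_i\alpha_iC_{J_i}$ and $\lambda^F_J-\lambda^G_J=\langle [J],\,\tfrac{1}{d_1}\nu^F-\tfrac{1}{d_2}\nu^G\rangle$, where $\nu^F=(\|f_i\|^2)_{i\in I}$ and $\nu^G=(\|g_i\|^2)_{i\in I}$. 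So it is enough to arrange (i) $C_J=0$ for $J$ ranging over a basis of $\spandex{P_1\cap P_2}$, and (ii) $\tfrac{1}{d_1}\nu^F-\tfrac{1}{d_2}\nu^G\in\spandex{P_1\cap P_2}^{\perp}$, which is nontrivial since $P_1\cap P_2$ being span-closed and singleton-free gives $e_i\notin\spandex{P_1\cap P_2}$ for every $i$. Granting (i), (ii), and that $H$ spans $\R^{d_1+d_2}$ (an open, generically valid condition), we obtain $\calD(P_1\cap P_2)\ge d_1+d_2$, so it remains only to make such choices with $d_1+d_2\ge d$.

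The step I expect to be the main obstacle is arranging (i) and (ii) simultaneously while keeping $d_1+d_2\ge d$. Condition (i) forces, for each $J\in P_1\cap P_2$, the row space of the synthesis operator of $\{g_j\}_{j\in J}$ (of dimension $d_2$, since this subframe spans $\R^{d_2}$) into the kernel of the synthesis operator of $\{f_j\}_{j\in J}$ (of dimension $|J|-d_1$), hence $d_1+d_2\le|J|$. By Proposition~\ref{prop:badBoundLift} applied to $P_1$ we do have $\min\{|J|:J\in P_1\cap P_2,\ J\ne\emptyset\}\ge\calD(P_1)=d$, but this is tight exactly when $P_1\cap P_2$ has a minimal element of size $d$, in which case $d_1+d_2=d$ with $\min\{d_1,d_2\}=1$, and a $1$-dimensional frame cannot realize a nontrivial poset --- so the direct sum genuinely fails in those boundary cases. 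There one would instead have to deform: start from a realization $\Phi$ of $P_1$ in $\R^d$ and move it within the real variety $\mathcal{V}=\{\Psi\in\R^{d\times|I|}:\Psi_J\Psi_J^{*}\text{ is a scalar matrix for all }J\in P_1\cap P_2\}$, past the ``extra'' tight sets of $P_1\setminus P_2$ yet without creating tight sets outside $P_1$. Since a realization of $P_2$ in $\R^d$ also lies in $\mathcal{V}$, this would succeed if $\mathcal{V}$ had an irreducible component meeting the realization loci of both $P_1$ and $P_2$ (the generic point of such a component then works, after deleting finitely many proper subvarieties and the rank-deficient locus). Establishing such an irreducibility/connectedness property of $\mathcal{V}$ --- effectively, understanding the moduli of frames with a prescribed factor poset and which squared-norm profiles they attain --- is where I expect the real difficulty to lie, and is presumably why the statement is posed only as a conjecture.
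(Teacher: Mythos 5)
The statement you are addressing is posed in the paper as an open conjecture: the authors give no proof, only the closing remark that it would suffice, given frames realizing $P_1$ and $P_2$ in $\R^n$, to produce a single frame $\{h_i\}_{i\in I}\sub\R^n$ realizing $P_1\cap P_2$. So there is no proof in the paper to compare against, and the only question is whether your argument closes the gap. It does not, and to your credit you say so; let me just make the gap precise. The direct-sum reduction itself is sound: for $H=\{(f_i,g_i)\}_{i\in I}$ one indeed has $\fpos{H}\sub P_1\cap P_2$, with equality exactly when the cross-terms $C_J$ vanish and the frame bounds agree for all $J\in P_1\cap P_2$, and both conditions are linear in $[J]$, so they need only be imposed on a basis of $\spandex{P_1\cap P_2}$. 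But the entire content of the conjecture is then concentrated in the step you label ``granting (i) and (ii)'': you never show, even in one nontrivial instance, that realizations $F$ and $G$ exist for which all the $C_J$ vanish simultaneously and the squared-norm profiles are compatible. The rank obstruction you derive, $d_1+d_2\leq|J|$ for every nonempty $J\in P_1\cap P_2$, is only a \emph{necessary} condition for (i), and together with $d_1,d_2\geq 2$ it already rules the construction out whenever $P_1\cap P_2$ has a minimal element of size $d$, and in particular whenever $d=3$. So the direct sum fails not just in ``boundary cases'' but in a substantial range of cases, and where it does not obviously fail it is still not shown to succeed.

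The fallback deformation argument has the same status: the irreducibility or connectedness of the variety $\mathcal{V}$, which is what would let you move a realization of $P_1$ to a generic point realizing exactly $P_1\cap P_2$, is precisely the kind of statement about the moduli of frames with prescribed factor poset that the paper identifies as not understood --- establishing it would essentially prove the conjecture outright. What you have, then, is a well-organized reduction of the conjecture to two concrete open problems rather than a proof. The reduction is a genuine refinement of the paper's closing remark (in particular the observation that tightness of the direct sum is governed by finitely many conditions linear over $\spandex{P_1\cap P_2}$ is worth recording), but the conjecture remains open after your argument.
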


Based on the results presented above, to prove the conjecture it suffices to show that for two frames $\fI,\gI\sub\R^n$ with factor posets $P_1$ and $P_2$, respectively, there exists some $\{h_i\}_{i\in I}\sub \R^n$ with factor poset $P_1\cap P_2$.

\subsection{Factor poset enumeration and size bounds}

In this subsection we touch on two distinct problems related to factor poset enumeration for a fixed dimension $n$ and number of vectors $k$:
\begin{enumerate}
\item how many non-isomorphic (or non-strongly-isomorphic) factor posets are there for frames $\fI\sub H_n$ with $|I|=k$, and
\item how large can these factor posets be? 
\end{enumerate}
By \emph{isomorphic} we mean the usual definition of isomorphism on posets $(P_1,\leq_1)$ and $(P_2,\leq_2)$, namely, an invertible mapping $\phi:P_1\to P_2$ so that for all $a,b\in P_1$ we have
$$\phi(a)\leq_2\phi(b)\text{ if and only if }a\leq_1 b.$$
For two subsets $P_1,P_2\sub 2^I$ for an index set $I$, both partially-ordered by set inclusion, we say $P_1$ and $P_2$ are \emph{strongly isomorphic} if they are isomorphic and there exists some permutation $\sigma$ of the indices of $I$ so that $\sigma(P_1)=P_2$. We see that strong isomorphism classes are the collection of factor posets modulo the action of the symmetric group on the indices in $I$.

Here we begin by considering the enumerative question for frames in $\R^2$. Note that factor posets are uniquely determined by their index span (because of span-closure), hence enumerating factor posets for $\R^2$ is equivalent to enumerating the number of different subsets of $\{0,1\}^k\sub\R^k$ which are realized as the intersection of a subspace with $\{0,1\}^k$, the vertices of the unit hypercube (also known as the $0/1$-polytope). Technically speaking, we should restrict our attention to subsets which contain no vector $e_1,\ldots,e_k$ and we should only count up to permutation of the indices (given we are counting strong isomorphism). While it appears that an application of Burnside's lemma may be appropriate here, for the situation we are considering it is difficult to apply.

In general this appears to be a difficult problem \cite{aicholzer}. Here we only discuss the computability of this number. A primitive result here is as follows:

\begin{proposition}
For any $k\in \N$ there exists a brute-force enumeration scheme to determine the number of factor posts for frames with $k$ vectors in $\R^2$ which is guaranteed to terminate in time $O\left(2^{5k/2-k^2}k^{k^2/2}\right)$.
\end{proposition}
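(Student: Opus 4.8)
The plan is to produce an explicit finite enumeration procedure and bound the number of objects it must examine. Since factor posets of frames in $\R^2$ are exactly the subsets of $\{0,1\}^k$ obtained by intersecting a linear subspace of $\R^k$ with the hypercube vertices $\{0,1\}^k$ (by span-closure, Theorem \ref{thm:r2inverse}), and such a subset is determined by its index span $K = \spandex{P}$, it suffices to enumerate all subspaces $K \sub \R^k$ that arise in this way, compute $K \cap \{0,1\}^k$ for each, discard those containing some standard basis vector $e_i$, and count the distinct results up to permutation of coordinates. The key observation that makes this finite is that each relevant subspace $K$ is spanned by index vectors $[J]$ for $J \in P$; in fact, every such $K$ is a real span of a subset of the $2^k$ vectors $\{[J] : J \sub I\} \sub \{0,1\}^k$.

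**First I would** argue that it is enough to range over all subsets $\mathcal{J} \sub 2^I$ and form $K_{\mathcal{J}} = \Span\{[J] : J \in \mathcal{J}\}$: every achievable index span is of this form. There are $2^{2^k}$ such subsets $\mathcal{J}$, which is far too many, so the next step is to observe that $K_{\mathcal{J}}$ is determined by a spanning subset of size at most $k$ (a basis), so it suffices to range over subsets $\mathcal{J}$ with $|\mathcal{J}| \le k$, chosen from the $2^k$ available vectors $[J]$; the number of these is $\sum_{m=0}^{k} \binom{2^k}{m} \le (k{+}1)\binom{2^k}{k} \le (k{+}1)\frac{2^{k^2}}{k!}$. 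For each such choice one computes the subspace (e.g.\ by row reduction, which is polynomial in $k$), then computes $K \cap \{0,1\}^k$ by testing each of the $2^k$ hypercube vertices for membership in $K$ (again polynomial per vertex), producing a poset $P$; one checks no $e_i \in K$; and one records $P$. Finally one reduces the recorded list modulo the $k!$ permutations of coordinates and counts the distinct classes.

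**The main obstacle** is matching the stated running time $O\bigl(2^{5k/2 - k^2} k^{k^2/2}\bigr)$, which on its face \emph{decreases} super-exponentially in $k$ and so cannot literally bound a procedure that already must examine $2^k$ hypercube vertices; I would interpret the exponents as intended to read $2^{5k^2/2}$-type growth (or the claimed bound is simply a loose, possibly mistyped, artifact) and aim to show the honest bound $(k{+}1)\,\frac{2^{k^2}}{k!}$ for the number of candidate subspaces, times a $\mathrm{poly}(k) \cdot 2^k$ cost per subspace for intersection computation, times a $k! \cdot (\text{list length})$ cost for the permutation reduction. Using $k! \ge (k/e)^k$ gives $\frac{2^{k^2}}{k!} \le 2^{k^2} e^k k^{-k} \le 2^{k^2 + 2k} k^{-k}$, and since $k^{-k} = 2^{-k \log_2 k}$ one can write this as roughly $2^{k^2 + 2k - k\log_2 k}$; absorbing the polynomial and $2^k$ factors and the final dedup pass into the exponent yields a bound of the form $2^{O(k^2)} k^{O(k^2)}$, which subsumes whatever the precisely-stated exponent is meant to be. So the plan reduces to: (i) reduce to enumerating spans of $\le k$ index vectors; (ii) bound the count of such spans by $\binom{2^k}{\le k}$ and estimate this via Stirling; (iii) bound the per-span work (row reduction plus $2^k$ membership tests plus bookkeeping); (iv) bound the final deduplication over $S_k$; (v) multiply and simplify to the claimed form.

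**I expect step (ii)--(v), the bookkeeping of exponents, to be where care is needed** rather than any genuine mathematical difficulty — the content is entirely that the search space is finite and explicitly size-bounded, so the proof is a matter of writing down the enumeration and carrying out an elementary asymptotic estimate with Stirling's formula, being honest that the exponent in the statement is a convenient (if crude) upper bound of the form $2^{O(k^2)}$.
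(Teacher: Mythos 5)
Your proposed route is genuinely different from the paper's, but the one concrete error in your write-up is the reading of the time bound, and it matters. The quantity $2^{5k/2-k^2}k^{k^2/2}=2^{5k/2}\bigl(\sqrt{k}/2\bigr)^{k^2}$ does not decrease in $k$: the factor $k^{k^2/2}$ swamps $2^{k^2}$ once $k\geq 4$, so the stated bound grows like $2^{\frac{1}{2}k^2\log_2 k-k^2+O(k)}$ and comfortably dominates the $2^k$ hypercube-vertex scan you worried about. There is nothing to reinterpret, and the hedge that the exponent is ``possibly mistyped'' leaves the proposition unproved as stated, since that exponent is precisely what is being asserted. The paper arrives at it directly: by Theorem \ref{thm:r2inverse} every factor poset is the subset-sum poset of some integer vector $\mathbf{a}\in\Z^k$ with no zero entries and $\gcd(\mathbf{a})=1$ (rationality of the feasible region via Cramer's rule, then clearing denominators); a cited bound on primitive integer normals of $0/1$-configurations gives $\|\mathbf{a}\|_\infty\leq 2^{-(k-1)}k^{k/2}$; enumerating all $\bigl(2\cdot 2^{-(k-1)}k^{k/2}\bigr)^k$ such tuples and computing each subset-sum structure in $O(2^{k/2})$ time by Horowitz--Sahni multiplies out to exactly $O\bigl(2^{5k/2-k^2}k^{k^2/2}\bigr)$.

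Your alternative --- range over the $\binom{2^k}{\leq k}$ spans of at most $k$ index vectors, intersect each with $\{0,1\}^k$, discard those containing some $e_i$, and deduplicate --- is sound in principle: every factor poset $P$ equals $\spandex{P}\cap\{0,1\}^k$ by span-closure, $\spandex{P}$ has a basis among the $[J]$ with $J\in P$, and conversely every candidate subspace yields a span-closed, singleton-free poset, hence an actual factor poset by Theorem \ref{thm:r2inverse}; so the enumeration is exhaustive and correct. Carried out honestly it costs about $2^{k^2+O(k\log k)}$, which is asymptotically \emph{smaller} than the paper's bound, so your scheme would in fact establish the proposition --- but only after you perform the comparison $2^{k^2+O(k\log k)}=O\bigl(2^{5k/2-k^2}k^{k^2/2}\bigr)$ that your proposal replaces with the assertion that the target is an artifact. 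Note also that the closing form ``$2^{O(k^2)}k^{O(k^2)}$'' is too weak for this purpose: $k^{ck^2}$ with $c>1/2$ is not $O$ of the stated bound, so you must pin the constant in that second exponent (your actual estimate gives $c=0$, which suffices). The gap, then, is not in the enumeration idea but in the fact that the specific bound claimed by the proposition is never verified.
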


\begin{proof}
Let $P\sub 2^I$ be a factor poset for some $\R^2$ frame. Hence, $P$ satisfies the conditions of Theorem \ref{thm:r2inverse}. Let $K=\spandex{P}$. Define
$$A = K^\perp\setminus \left(\bigcup_{J\in 2^I\setminus P} \{\aaa\in K^\perp:\langle \aaa,[J]\rangle=0\}\right)\neq\emptyset,$$
the object of central interest in Theorem \ref{thm:r2inverse}. We claim that $A\cap \Z^k\neq \emptyset$. It suffices to show that $A\cap \Q^k\neq \emptyset$, which follows from a standard Cramer's rule argument; hence, we have the desired $A\cap\Z^k\neq\emptyset$. Consequently, there is some subset of $\Z\setminus\{0\}$ whose subset sum structure corresponds to the poset $P$. Now let $\mb{a}\in A\cap\Z^k$. We may assume that $\gcd(\mb{a})=\gcd(a_1,\ldots,a_k)=1$. In \cite{aicholzer} it is shown that we must have that $\|\mb{a}\|_\infty\leq 2^{-(k-1)}k^{k/2}$. Hence, we may enumerate all factor posets by computing the subset-sum structure for the $\left(2\cdot2^{-(k-1)}k^{k/2} \right)^k$ integer $k$-tuples with no zero components and $\ell_\infty$ norm at most $2^{-(k-1)}k^{k/2}$. Subset-sum structure can be computed in time $O(2^{k/2})$ \cite{horowitzsahni}, hence this brute force algorithm is
$$O\left(2^{k/2}\left(2\cdot2^{-(k-1)}k^{k/2} \right)^k\right)=O\left(2^{5k/2-k^2}k^{k^2/2}\right).$$
\end{proof}

\begin{remark}
Let us remark that this is an improvement over the ``obvious'' brute-force enumeration scheme. Namely, calculate the powerset of $\{0,1\}^k$ (which has $2^{2^k}$ elements). For each of these sets $A\sub \{0,1\}^k$ we must determine if it is span-closed. Define the matrix $M_A$ to be the $k\times 2^k$ matrix whose first $|A|$ columns are the vectors in $A$ and whose remaining columns are the vectors $\{0,1\}^k\setminus A$. Note that $M_A$ can be row-reduced to $\rowred(M_A)$ in time $O(k^3)$ (although storage is exponential in $k$). If $M_A$ is span-closed and $A$ contains no vector $e_1,\ldots,e_k$ (corresponding to a poset not containing singletons) then we can express $\rowred(M_A)$ in the block form
$$\rowred(M_A) = \begin{bmatrix}
I & M_1 \\
\mathcal{O}&M_2
\end{bmatrix},$$
where $\mathcal{O}$ is a matrix with all entries equal to zero. Then $M_A$ is span-closed if and only if $M_2$ contains no columns which are entirely zero. Hence, this brute forces scheme has time complexity $O\left(k^3 2^{2^k}\right)$, which is significantly worse than that given in the previous proposition.
\end{remark}

We now shift our attention to bounding the possible size of factor posets. Here there are two distinct questions of interest:
\begin{enumerate}
\item how many possible subsets of a given tight frame can be tight, and
\item how many possible subsets of a given tight frame can be prime tight frames?
\end{enumerate}

The first question is answered using the existing literature, where the size of subset-sum structures has been studied using techniques from complex analysis \cite{nabeya} and Sperner-like theorems in poset theory \cite{furedi}. We begin with the theorem statement.

\begin{theorem}[\cite{furedi}]
Let $A\sub\R$ be a set of $k\geq 2$ nonzero numbers which sum to zero and let $P$ be its subset-sum poset. Then
$$|P|\leq \left\{\begin{array}{cl}
\binom{k}{k/2},& k\text{ is even,}\\
2\binom{k-1}{\floor{k/2}-1},&k\text{ is odd.}
\end{array}\right.$$
Moreover, this bound is tight. Up to permutation of indices and a scaling of all numbers, these bounds are achieved uniquely by $a_1 = \cdots = a_{k/2}=1,a_{k/2+1}=\cdots=a_k = -1$ for $k$ even, and $a_1=2,a_2=\cdots=a_{\floor{k/2}-1}=1,a_{\floor{k/2}}=\cdots=a_{k}=-1$ for $k$ odd.
\end{theorem}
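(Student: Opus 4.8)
The plan is to exploit the complementation symmetry of the subset-sum poset $P=\subsetsum{A}$ and thereby reduce the counting problem to a classical antichain estimate — Sperner's theorem together with its Littlewood--Offord refinement. First I would record the two structural facts that make this work: because $\sum_{a\in A}a=0$, the map $B\mapsto A\setminus B$ is a fixed-point-free involution of $P$, so $|P|$ is even; and fixing any element $a_0\in A$, exactly half of the members of $P$ contain $a_0$. Now $\{B:a_0\in B\in P\}$ corresponds bijectively to $\{C\subseteq A\setminus\{a_0\}:\sum_{c\in C}c=-a_0\}$. Splitting $A\setminus\{a_0\}$ into its positive and negative parts and performing the usual sign-flip (replace each negative weight by its absolute value and translate the target accordingly), this family becomes a family $\mathcal D$ of subsets of a $(k-1)$-element set, all having the same value under a \emph{positive} weighting; two nested members of $\mathcal D$ would have a nonzero-weight difference, which is impossible, so $\mathcal D$ is an antichain. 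Sperner's theorem then gives $|P|=2|\mathcal D|\le 2\binom{k-1}{\floor{(k-1)/2}}$.

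For $k$ even this is already the claimed bound, since $\binom{k}{k/2}=2\binom{k-1}{k/2-1}$, and the configuration $a_1=\dots=a_{k/2}=1$, $a_{k/2+1}=\dots=a_k=-1$ meets it by the Vandermonde identity $\sum_j\binom{k/2}{j}^2=\binom{k}{k/2}$. The work is in the odd case, where the crude estimate $2\binom{k-1}{(k-1)/2}$ exceeds the asserted $2\binom{k-1}{\floor{k/2}-1}$. Here I would use the extra constraint that the common target of $\mathcal D$ is bounded away from the centre: if the target equalled half the total positive weight, then the positive and negative magnitudes of $A\setminus\{a_0\}$ would balance, forcing $\sum_{a\in A}a=a_0\ne 0$. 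Thus $\mathcal D$ is a \emph{subset-sum level set} of $k-1$ positive reals whose target sits off-centre, and for such level sets the maximum size drops from the central binomial coefficient to the neighbouring one. Making this precise is exactly where one must invoke the sharp form of the Erdős solution of the Littlewood--Offord problem (and its equality analysis, or a Kleitman-type refinement for off-centre targets) rather than plain Sperner; one expects $|\mathcal D|\le\binom{k-1}{\floor{k/2}-1}$, with tightness coming from a Vandermonde-type count on the stated family having one entry equal to $2$ and the rest $\pm1$.

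Finally, for the uniqueness statement I would trace through the equality cases: saturating the chain of inequalities forces $\mathcal D$ to be an extremal subset-sum antichain, which — by the uniqueness in Sperner's theorem for even $k$, and by the rigidity of the Littlewood--Offord extremizers for odd $k$ — must consist of a single layer of the cube with constant subset sums on that layer. Constant sums on a full layer force all the weights to be equal, so after reversing the sign-flip and the complementation reduction all $|a_i|$ coincide when $k$ is even; for odd $k$ the parity obstruction (there is no $\pm1$ vector of odd length summing to zero) pushes the extremizer to the ``one $2$, rest $\pm1$'' shape. Running this over all choices of the pivot $a_0$ should pin down $A$ up to permutation of indices and an overall scaling.

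The step I expect to be the main obstacle is the sharp odd bound: plain Sperner overshoots, and one genuinely needs to use that $\mathcal D$ is a level set of a subset sum with an off-centre target — that is, the quantitative, equality-sensitive version of the Littlewood--Offord estimate rather than a bare antichain bound. The uniqueness assertion is of comparable difficulty and rests on the same rigidity.
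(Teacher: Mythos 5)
First, a point of reference: the paper does not prove this statement at all --- it is imported verbatim from \cite{furedi} --- so there is no in-paper argument to compare yours against, and your proposal has to stand on its own. On those terms, your even-case argument is sound and essentially complete: the complementation map $B\mapsto A\setminus B$ is a fixed-point-free involution of $P$, so exactly half of $P$ contains a fixed $a_0$; the sign-flip turns $\{B\in P: a_0\in B\}$ into a single level set of a positive weighting of a $(k-1)$-element set, hence an antichain; Sperner then gives $|P|=2|\mathcal{D}|\le 2\binom{k-1}{\lfloor (k-1)/2\rfloor}$, which for $k$ even equals $\binom{k}{k/2}$, and the Vandermonde count verifies attainment by the balanced $\pm 1$ configuration. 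The even-case uniqueness can also be closed along your lines, since for odd ground-set size $k-1$ the only maximum antichains are the two middle layers, and constant weight on a full middle layer forces all weights equal.

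The genuine gap is exactly where you flag it, and it is not a small one: the odd case \emph{is} the theorem. The lemma you need --- that a level set of a positive weighting of an even-sized $m$-set at an off-centre target $t\ne W/2$ has size at most $\binom{m}{m/2-1}$ --- does not follow from any off-the-shelf Littlewood--Offord statement. The Erd\H{o}s refinement bounds the union of $r$ level sets by the sum of the $r$ largest binomial coefficients; applied to your level set at $t$ together with its complementary level set at $W-t$, it yields only $2|\mathcal{D}|\le \binom{m}{m/2}+\binom{m}{m/2-1}$, which is strictly weaker than the needed $|\mathcal{D}|\le\binom{m}{m/2-1}$, and Kleitman-type symmetric-chain arguments give the same union bound rather than a bound on a single off-centre level. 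So the ``one expects'' step is precisely the Sperner-type result that \cite{furedi} supplies, and without it both the odd bound and the odd rigidity analysis remain unproven; writing ``invoke the sharp form of the Littlewood--Offord equality analysis'' names a hope, not an available theorem. One incidental remark in your favour: the extremal configuration for odd $k$ as printed in the paper does not actually sum to zero (for $k=5$ it reads $\{2,-1,-1,-1,-1\}$); the correct extremizer, which your ``one entry $2$ and the rest $\pm1$'' description matches, has $(k-3)/2$ entries equal to $+1$ and $(k+1)/2$ entries equal to $-1$.
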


Using the connection of $\R^2$ factor posets to the subset-sum problem there is an immediate corollary for tight frames.

\begin{corollary}\label{corollary:fpSizeBound}
Let $F=\f\sub\Hnwozero$ be a tight frame with factor poset $\fpF$. Then
$$|\fpF|\leq  \left\{\begin{array}{cl}
\binom{k}{k/2},& k\text{ is even,}\\
2\binom{k-1}{\floor{k/2}-1},&k\text{ is odd.}
\end{array}\right.$$
Moreover, this bound is tight for $n=2$ and not tight for $n\geq3$.
\end{corollary}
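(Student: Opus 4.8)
The plan is to transport F\"uredi's subset-sum theorem quoted above across the dictionary built up in this section.

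\textbf{The inequality.} Let $F=\f\sub\Hnwozero$ be a tight frame. Proposition \ref{prop:nto2Reduct} gives a frame $G\sub\R^2$ with $\fpos{G}=\fpF$, and we may take $G$ with no zero vectors (via Theorem \ref{thm:r2inverse}, using that $\fpF$ has no singletons since $F$ omits $0$ and $\dim H_n\ge 2$). Proposition \ref{prop:subsetsumConnectionR2} then produces a family $A=\{a_i\}_{i=1}^k$ of \emph{nonzero} reals with $\subsetsum{A}=\fpos{G}=\fpF$. Tightness of $F$ forces $I\in\fpF$, hence $\sum_{i=1}^k a_i=0$, so $A$ is a family of $k$ nonzero reals summing to zero, and the quoted theorem applies verbatim to $|\subsetsum{A}|=|\fpF|$. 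The only thing to watch is that ``set'' in Proposition \ref{prop:subsetsumConnectionR2} and in F\"uredi's theorem must be read as ``indexed family''; the diagram-vector construction may repeat values (e.g.\ $\{e_1,e_2,-e_1,-e_2\}$ collapses to $\{\alpha,-\alpha,\alpha,-\alpha\}$), but that is precisely the setting of those statements.

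\textbf{Sharpness at $n=2$.} I would push the extremal family from F\"uredi's theorem back through the reverse direction of Proposition \ref{prop:subsetsumConnectionR2}. For $k$ even this produces the tight frame in $\R^2$ consisting of $k/2$ copies of $e_1$ and $k/2$ copies of $e_2$; its diagram vectors are $k/2$ copies of $e_1$ and $k/2$ copies of $-e_1$, so $J\in\fpF$ iff $J$ meets the two halves equally, and a Vandermonde identity counts exactly $\binom{k}{k/2}$ such $J$. The odd case is identical using the family $(2,1,\dots,1,-1,\dots,-1)$.

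\textbf{Non-sharpness at $n\ge 3$.} Here I would invoke the \emph{uniqueness} clause of F\"uredi's theorem. If $F\sub\Hnwozero$ is a tight frame with $|I|=k$ and $|\fpF|$ equal to the bound, then the reduction above gives $\fpF=\subsetsum{A}$ with $|\subsetsum{A}|$ maximal, so $A$ is, up to permutation and scaling, the extremal family for that parity of $k$. For every $k\ge 4$ this family contains indices $i\ne j$ with $a_i+a_j=0$ (a $+1$ and a $-1$; the degenerate extremal configuration $(2,-1,-1)$ occurs only at $k=3$), so $\{i,j\}\in\fpF$, i.e.\ $\{f_i,f_j\}$ is a tight frame for $H_n$. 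But a tight frame must span, forcing $n\le 2$, a contradiction. Hence the bound is unattained for $n\ge 3$ whenever $k\ge 4$; it is also vacuously unattained when $k\le 2$, the one genuine exception being $(n,k)=(3,3)$, where the bound equals $2$ and is met by a scaled orthonormal basis of $H_3$. I would therefore state the sharpness clause for $k\ge 4$, or flag this exception.

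\textbf{Obstacle.} There is no deep obstacle: the corollary is a translation of a known theorem through Propositions \ref{prop:nto2Reduct} and \ref{prop:subsetsumConnectionR2}, together with the trivial fact that a tight subframe spans the ambient space. The only points needing care are the ``indexed family'' reading of the quoted theorem, the bookkeeping of the odd-$k$ extremal family, and the small-$k$ caveat on the sharpness clause.
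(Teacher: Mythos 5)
Your proposal is correct and follows the same route as the paper: the paper obtains the bound as an immediate consequence of F\"uredi's theorem via the subset-sum correspondence of Proposition \ref{prop:subsetsumConnectionR2}, and dismisses sharpness for $n\ge 3$ with the one-line remark that the extremal configurations have no $H_3$ inverse --- which is exactly the pair-summing-to-zero argument you spell out using the uniqueness clause. Your observation that $(n,k)=(3,3)$ is a genuine exception to the ``not tight for $n\ge 3$'' clause (there the bound equals $2$ and an orthonormal basis of $H_3$ attains it) is a valid refinement that the paper's statement overlooks.
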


This bound is not tight for $n\geq 3$ because those tight frames which realize the bound have no inverse in $H_3$ (which can be argued via elementary means). We have not been able to prove the following conjecture:

\begin{conjecture}\label{conj:fpSize}
Let $F=\f\sub\Hnwozero$ be a tight frame with factor poset $\fpF$ and suppose $|I|=k=mn$, where $m\in\N$. Then
$$|\fpF|\leq \sum_{\ell=0}^m \binom{m}{i}^n.$$
Moreover, this bound is tight taking
$$G = \{\underbrace{e_1,\ldots,e_1}_{m\text{ times}},\ldots,\underbrace{e_n,\ldots,e_n}_{m\text{ times}}\}.$$
\end{conjecture}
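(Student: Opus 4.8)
\emph{Proof strategy.} The plan is to prove the two directions separately. The tightness of the bound is routine: for the frame $G$, writing $I=G_1\sqcup\cdots\sqcup G_n$ where $G_j$ collects the $m$ indices carrying $e_j$, the frame operator of $\{g_i\}_{i\in J}$ is $\operatorname{diag}(|J\cap G_1|,\ldots,|J\cap G_n|)$, which is a (nonnegative) multiple of $I_n$ exactly when $|J\cap G_1|=\cdots=|J\cap G_n|$; summing $\binom{m}{\ell}^n$ over the common value $\ell\in\{0,\ldots,m\}$ gives $|\fpos{G}|=\sum_{\ell=0}^m\binom{m}{\ell}^n$.

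For the upper bound I would first reformulate: $J\in\fpF$ iff $\sum_{j\in J}\diag{f_j}=0$, so $\fpF$ is the family of zero-sum subsets of the diagram vectors $\{\diag{f_i}\}_{i\in I}$, equivalently the set of $0/1$-points of the null space $W\subseteq\R^k$ of the matrix with columns $\diag{f_1},\ldots,\diag{f_k}$, a subspace containing $\mathbf 1=(1,\ldots,1)$ because $F$ is tight. For $n=2$ this is the subset-sum picture of Proposition \ref{prop:subsetsumConnectionR2}, and the Füredi bound of Corollary \ref{corollary:fpSizeBound} together with Vandermonde's identity yields $|\fpF|\leq\binom{2m}{m}=\sum_{\ell=0}^m\binom{m}{\ell}^2$, settling $n=2$. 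For general $n$ I would aim for a rank-$n$ analogue of Füredi's theorem.

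The skeleton: since $J\subsetneq J'$ in $\fpF$ forces $S_{J'}-S_J=S_{J'\setminus J}$ (here $S_K=\sum_{k\in K}f_kf_k^*$) to be a nonzero positive semidefinite scalar multiple of $I_n$, hence of rank $n$, we get $|J'\setminus J|\geq n$, so every chain in $\fpF$ has at most $m+1$ elements and, by Mirsky's theorem, $\fpF$ is a union of at most $m+1$ antichains. The target is to arrange these so that the $\ell$-th antichain has size $\leq\binom{m}{\ell}^n$. In the $n=1$ case (the Sperner input behind Füredi) this is achieved by ``flipping'' negative numbers to positive, converting the equal-sum subsets of a fixed size into a genuine antichain in the Boolean lattice; the work is to find an analogue of this flip in $\R^{n(n-1)}$ and, more importantly, to obtain the \emph{product} form $\binom{m}{\ell}^n$ rather than a single binomial coefficient. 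I would also try, in parallel, induction on $n$ using the projection results of Section \ref{sect:projections}---almost every rank-$(n-1)$ projection preserves the factor poset and keeps $F$ tight, though because $mn$ need not be a multiple of $n-1$ one must carry an inductive statement valid for all cardinalities---and an entropy/Shearer-type inequality applied to a cover of $I$ by $n$ blocks of size $m$.

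I expect the product structure $\binom{m}{\ell}^n$---the coupling of the $n$ diagram-coordinate directions---to be the real obstacle. One cannot reduce to the extremal pattern by partitioning $I$ into $n$ blocks of size $m$ on which every tight subframe meets each block equally: already in $\R^2$ there are tight frames possessing a tight subframe whose cardinality is not divisible by $n$, which obstructs any such partition, and $\fpF$ need not be a graded poset, so ``level'' is not even intrinsically defined. So the crux is a new combinatorial ingredient---most plausibly a sharp bound on the number and layering of $0/1$-points in null spaces of \emph{diagram} matrices, refining the generic $0/1$-polytope bounds of \cite{aicholzer} by exploiting the rigidity that the columns are diagram vectors of genuine frame vectors.
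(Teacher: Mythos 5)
The statement you are addressing is labeled a \emph{conjecture} in the paper, and the authors state explicitly, immediately before it, that they have not been able to prove it; there is no proof in the paper to compare against. Your proposal does correctly dispose of the parts that are actually provable: the computation of $|\fpos{G}|$ for the extremal frame is right (the $\ell=0$ term accounts for $\emptyset\in\fpos{G}$), the chain-length bound $|J'\setminus J|\geq n$ via the rank of $\sum_{j\in J'\setminus J}f_jf_j^*$ is sound, and the $n=2$ case does follow from Corollary \ref{corollary:fpSizeBound} together with Vandermonde's identity $\sum_{\ell=0}^m\binom{m}{\ell}^2=\binom{2m}{m}$, consistent with the paper's own remark that the conjectured bound specializes to the known $n=2$ bound.

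For $n\geq 3$, however, what you have written is a research plan rather than a proof, and you say so yourself: the ``new combinatorial ingredient'' bounding the $\ell$-th antichain by $\binom{m}{\ell}^n$ is precisely the open content of the conjecture, and neither Mirsky's decomposition nor a F\"uredi-style flip is shown to produce it. One of your parallel routes is moreover a dead end as stated: since a factor-poset-preserving projection to $H_{n-1}$ leaves $|\fpF|$ unchanged, any bound obtained by descending in dimension can only be the \emph{weaker} lower-dimensional bound (ultimately $\binom{k}{\lfloor k/2\rfloor}$ from the $n=2$ case), which is strictly larger than $\sum_{\ell}\binom{m}{\ell}^n$ for $n\geq 3$; downward induction cannot recover the extra rigidity that dimension $n$ imposes. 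So the gap is genuine and is the same one the authors acknowledge. Of your remaining ideas, the Shearer-type inequality seems the most promising, but it presupposes a partition of $I$ into $n$ blocks that every tight subframe meets equally, and you have already observed that such a partition need not exist.
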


Observe that this conjectured bound is precisely that which is obtained for $n=2$. This is true because
$$\sum_{i=0}^m \binom{m}{i}^2 = \binom{2m}{m}.$$
Note that this proposed bound is a hypergeometric function ${}_nF_{n-1}$ for which there is no closed form in terms of binomial coefficients when $3\leq m \leq 9$ (see \cite{calkin}). Despite the multitude of literature on the structure of antichains in posets \cite{bollobas,ahlswedezhang,sperner}, we have not yet been able to apply these results for the form of constrained subset-sum problem arising in $H_n$ for $n>2$.

We now turn our attention to bounding the number of prime tight subframes for a given tight frame in $\R^2$. In other words, given a frame $F=\fI\sub\R^2$ with factor poset $\fpF$, what is the largest possible size of $|EC(\fpF)|$? We conjecture that it must be that
$$|EC(\fpF)|\leq 2 \binom{k-2}{\left\lfloor k/2-1\right\rfloor}.$$
While we have no proof that this bound is true, it is trivial to show it is asymptotically tight in $k$. Consider the construction given in the following example.

\begin{example}
Fix $k\geq 4$ and $n=2$. Define the frame of $k$ vectors in $\R^2$ to be
$$F_k = \{\underbrace{e_1,\ldots,e_1}_{k-2 \text{ times}},-\sqrt{\lfloor k/2-1\rfloor}e_2,-\sqrt{\lceil k/2-1\rceil}e_2\}.$$
It is straightforward to verify that $|EC(\mathbb{F}_{F_k})| = 2 \binom{k-2}{\left\lfloor \frac{k-2}{2}\right\rfloor}$ as the prime tight subframes consist of $\floor{k/2-1} $ choices of any of the first $k-2$ vectors paired with the penultimate vector, or any $\ceil{k/2-1}$ choices of the first $k-2$ vectors paired with the final vector. This gives
$$\binom{k-2}{\floor{k/2-1}} + \binom{k-2}{\ceil{k/2-1}} = 2 \binom{k-2}{\floor{k/2-1}}.$$
\end{example}

Let $f(k)=\Theta(g(k))$ denote that $\lim_{k\to\infty} f(k)/g(k)$ exists and is contained in $(0,\infty)$. We now show that the desired bound is asymptotically tight.
\begin{proposition}
Let $E_k$ denote the size of the largest possible empty cover for frames $F=\fI\sub\R^2\setminus\{0\}$ with $|I|=k$. Then
$$E_k = \Theta\left(\binom{k-2}{\floor{k/2-1}}\right).$$
\end{proposition}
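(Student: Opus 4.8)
The plan is to establish the two matching inequalities separately, and the lower bound is essentially already in hand. The preceding example exhibits, for each $k\geq 4$, a frame $F_k\sub\R^2\setminus\{0\}$ with $|I|=k$ and $|EC(\mathbb{F}_{F_k})| = 2\binom{k-2}{\floor{(k-2)/2}} = 2\binom{k-2}{\floor{k/2-1}}$, so immediately $E_k\geq 2\binom{k-2}{\floor{k/2-1}}$ (the cases $k\leq 3$ are irrelevant for an asymptotic statement, or may be checked by hand). Hence the only real content is the matching upper bound $E_k = O\!\left(\binom{k-2}{\floor{k/2-1}}\right)$.

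For the upper bound, the plan is to observe that for any frame $F=\fI\sub\R^2\setminus\{0\}$ with $|I|=k$, the empty cover $EC(\fpF)$ is by definition the set of minimal nonempty elements of the factor poset $\fpF$, so no element of $EC(\fpF)$ properly contains another; that is, $EC(\fpF)$ is an antichain in the Boolean lattice $2^I$. Sperner's theorem \cite{sperner} then gives $|EC(\fpF)|\leq\binom{k}{\floor{k/2}}$. (Alternatively one can pass to the subset-sum poset associated with $F$ via Proposition \ref{prop:subsetsumConnectionR2} and quote the F\"uredi bound stated above, of which $EC(\fpF)$ is a subcollection; this yields the same order of magnitude.) It then remains only to reconcile $\binom{k}{\floor{k/2}}$ with $\binom{k-2}{\floor{k/2-1}}$, which is the routine observation that consecutive central binomial coefficients differ by a bounded ratio: one computes $\binom{k}{\floor{k/2}}/\binom{k-2}{\floor{k/2-1}}$ to be $4-\tfrac2m$ when $k=2m$ and $\tfrac{4m+2}{m+1}$ when $k=2m+1$, in either case at most $4$. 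Thus $E_k\leq 4\binom{k-2}{\floor{k/2-1}}$, and combining with the lower bound gives $2\binom{k-2}{\floor{k/2-1}}\leq E_k\leq 4\binom{k-2}{\floor{k/2-1}}$, which is exactly $E_k=\Theta\!\left(\binom{k-2}{\floor{k/2-1}}\right)$.

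I do not expect a genuine obstacle: the argument is in essence ``the example plus Sperner,'' and the softness of the $\Theta$ means no sharp constant is needed, so the factor-of-$4$ slack between $\binom{k}{\floor{k/2}}$ and $\binom{k-2}{\floor{k/2-1}}$ is harmless. The only places requiring care are the floor/ceiling bookkeeping (verifying $\binom{k-2}{\floor{k/2-1}}+\binom{k-2}{\ceil{k/2-1}}=2\binom{k-2}{\floor{k/2-1}}$ in both parities, as already asserted in the example, and the binomial-ratio computation above), and both are elementary. I would also note that this argument does \emph{not} settle the conjectured sharp bound $|EC(\fpF)|\leq 2\binom{k-2}{\floor{k/2-1}}$ stated just before the proposition, since Sperner over-counts by that very constant factor; proving the sharp inequality would presumably require a Sperner-type argument that exploits the extra structure forced by the zero-sum constraint — in particular that a minimal zero-sum subset must meet both the positive part and the negative part of the underlying $k$ real numbers — rather than merely that $EC(\fpF)$ is an antichain.
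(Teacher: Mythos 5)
Your proof is correct, and its overall architecture --- the explicit example for the lower bound, a central-binomial-coefficient upper bound, and a bounded-ratio computation to reconcile $\binom{k}{\floor{k/2}}$ with $\binom{k-2}{\floor{k/2-1}}$ --- is the same as the paper's. The one genuine difference is the source of the upper bound: the paper invokes Corollary \ref{corollary:fpSizeBound} (the F\"uredi bound on the size of the \emph{entire} factor poset of a tight frame), whereas you observe that $EC(\fpF)$ is an antichain in $2^I$ and apply Sperner's theorem directly. Both yield $E_k\leq\binom{k}{\floor{k/2}}$, so the asymptotics come out identically, but your route is more elementary and, more to the point, applies verbatim to arbitrary frames: the corollary as stated assumes $F$ is a \emph{tight} frame, while $E_k$ ranges over all frames in $\R^2\setminus\{0\}$, so the paper's appeal to it implicitly requires first passing to the tight subframe indexed by the union of the empty-cover elements (whose empty cover is the same). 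Your antichain argument needs no such reduction. Your parity bookkeeping and the ratios $4-\tfrac{2}{m}$ (for $k=2m$) and $\tfrac{4m+2}{m+1}$ (for $k=2m+1$) are correct and match the paper's limit computation $\binom{k}{\floor{k/2}}\big/\bigl(2\binom{k-2}{\floor{k/2-1}}\bigr)\to 2$; your closing remark that neither argument reaches the conjectured sharp constant is also accurate, and for the same reason the paper gives only the $\Theta$ statement.
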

\begin{proof}
We know by the constructive example above that $E_k \geq 2\binom{k-2}{\floor{k/2-1}}$. Further, by Corollary \ref{corollary:fpSizeBound} we have that
$E_k \leq \binom{k}{\floor{k/2}}$. One can readily verify that 
$$\lim_{k\to\infty}\left. \binom{k}{\floor{k/2}}\right/\left(2\binom{k-2}{\floor{k/2-1}}\right)=2,$$
hence $E_k = \Theta\left(\binom{k}{\floor{k/2}}\right) = \Theta\left(\binom{k-2}{\floor{k/2-1}}\right)$.
\end{proof}

The conjectured bound is therefore tight (up to the multiplicative constant $2$) in the limit as $k\to\infty$. Let us note that it is easy to show that the desired (finite $k$) bound does hold true for certain factor posets, such as where the empty cover can be partitioned into $A\sqcup B=EC(\fpF)$ so that $\left|\bigcup_{a\in A} a \right|\leq k-2$ and $\left|\bigcup_{b\in B} b \right|\leq k-2$. However, this is not true for all empty covers of $\R^2$ frames. We suspect that a modification of the proof argument in \cite{furedi} on the size of the entire factor poset may be useful in proving our desired result. Further, it may be possible to develop an Ahlswede-Zhang (AZ) type identity \cite{ahlswedezhang} for factor posets expressing the connection between the size of the factor poset and the size of its empty cover. Clearly there is a trade off between the two, and therefore an AZ-type identity may also further illuminate the underlying combinatorial structure of factor posets.


\section{Scalable Frames}
\subsection{Scalability Posets}

In this section, we discuss scalable frames and characterize when scalings with certain properties exist. Throughout our discussion of scalability we assume that $F=\{f_i\}_{i\in I}\subseteq H_n$ is a unit-norm frame, i.e., $F$ is a frame and $\|f_i\|=1\;\forall i$.

\begin{definition}[\cite{noteOnScalableFrames}]
For a frame $\F =\{f_i\}_{i\in I}\sub H_n$, a \emph{scaling} is a vector $w=(w(1),\ldots,w(k))\in \mathbb{R}_{\geq 0}^k$ such that $\{\sqrt{w(i)}f_i\}_{i\in I}$ is a Parseval frame for $H_n$. If a scaling exists, $\F$ is said to be \emph{scalable}. 
\end{definition}

\begin{definition}[\cite{noteOnScalableFrames}]
A scaling $w$ is \emph{minimal} if $\{f_i \, : \, w(i) >0\}$  has no proper scalable subset. 
\end{definition}

The set of all scalings for a frame $\F$ can be described in terms of its minimal scalings. Before proceeding, we require a few additional definitions.

\begin{definition}
Let $\{x_i\}_{i=1}^m$ be a set of points in $H_n$. A point $x$ is a \emph{convex combination} of points from $\{x_i\}_{i=1}^m$ if
$x=\sum_{i=1}^m \alpha_ix_i,$ where $\alpha_i\geq 0$ and $\sum_{i=1}^m\alpha_i=1$. The set of all convex combinations of points in $\{x_i\}_{i=1}^m$ is called the \emph{convex hull} of $\{x_i\}_{i=1}^m$, and is defined as
	\begin{equation*}
	\conv\{x_i\}_{i=1}^m:=\left\{\sum_{i=1}^m\alpha_ix_i\, :\,\alpha_i\geq 0,\,\sum_{i=1}^m\alpha_i=1\right\}.
	\end{equation*}
\end{definition}

Let $\F=\{f_i\}_{i\in I}$ be a frame and let $\mathcal{P}$ denote the set
\begin{equation*}
	\mathcal{P}:=\left\{(w(1),\ldots,w(k))\, : \,w(i) \geq 0,\sum_{i=1}^k w(i)f_if_i^*=I_n\right\},
\end{equation*}
where each $f_if_i^*$ is an $n\times n$ Hermitian matrix called the \emph{outer product} of $f_i$ with itself. 

Using basic polytope theory \cite{ziegler} authors in \cite{noteOnScalableFrames} described the structure of $\mathcal{P}$:

\begin{theorem}[\cite{noteOnScalableFrames}]
Suppose $\F=\{f_i\}_{i\in I}\sub H_n$ is a unit norm frame and $\ms$ is the set of its minimal scalings. Then $\mathcal{P}=\conv\ms$ and $w\in \mathcal{P}$ if and only if $w$ is a scaling of $\F$.
\end{theorem}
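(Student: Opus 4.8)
The statement asserts that for a unit-norm frame $\F = \{f_i\}_{i \in I} \sub H_n$ with set of minimal scalings $\ms$, the polytope $\Poly = \conv\ms$, and that membership in $\Poly$ is exactly the same as being a scaling. The plan is to work inside the ambient real vector space of Hermitian $n \times n$ matrices, where $\Poly$ becomes a polytope cut out by the affine equation $\sum_i w(i) f_if_i^* = I_n$ intersected with the nonnegative orthant $\Real_{\geq 0}^k$. The second claim, that $w \in \Poly$ if and only if $w$ is a scaling, is essentially immediate from the definitions: $w$ is a scaling precisely when $\{\sqrt{w(i)}f_i\}$ is Parseval, i.e. its frame operator $\sum_i w(i) f_if_i^*$ equals $I_n$, which together with $w(i) \geq 0$ is exactly the defining condition of $\Poly$. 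So the real content is the identity $\Poly = \conv\ms$.

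First I would establish that $\Poly$ is a (bounded) polytope. Boundedness follows because for any $w \in \Poly$, taking the trace of $\sum_i w(i) f_i f_i^* = I_n$ and using $\|f_i\| = 1$ gives $\sum_i w(i) = n$, so $\Poly$ lies in the simplex $\{w \geq 0 : \sum_i w(i) = n\}$; being a closed subset of a compact set cut out by finitely many linear equalities and inequalities, $\Poly$ is a polytope. By the Minkowski--Weyl theorem, $\Poly = \conv(V)$ where $V$ is the (finite) set of vertices of $\Poly$. The strategy is then to show $V = \ms$, from which $\Poly = \conv\ms$ follows.

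Next I would characterize the vertices of $\Poly$ combinatorially. A point $w \in \Poly$ is a vertex if and only if it is not a proper convex combination of two other points of $\Poly$; equivalently, the columns $\{f_if_i^* : w(i) > 0\}$ are affinely independent in a suitable sense, so that the only $v$ in the kernel of $w \mapsto \sum_i w(i) f_if_i^*$ supported on $\supp(w)$ and summing appropriately is $v = 0$. I would translate this into: $w$ is a vertex of $\Poly$ iff the subframe $\{f_i : w(i) > 0\}$ admits no scaling other than (positive multiples of) the restriction of $w$ — which is precisely the statement that $\{f_i : w(i) > 0\}$ has no proper scalable subset, i.e. $w$ is minimal. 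Concretely: if $w$ is a vertex but $\{f_i : w(i) > 0\}$ had a proper scalable subset $F'$, I could produce two distinct scalings of $\{f_i : w(i) > 0\}$ whose average is (a scalar multiple of) $w$, contradicting vertexhood after normalizing; conversely, if $w$ is minimal but not a vertex, then $w = \tfrac12(u + u')$ with $u \neq u'$ in $\Poly$, and $\supp(u), \supp(u') \sub \supp(w)$, and one checks $\supp(u)$ (or $\supp(u')$) is a proper scalable subset of $\{f_i : w(i) > 0\}$ — here I would need to be careful that at least one of $u, u'$ has strictly smaller support, which holds because $u - u'$ is a nonzero vector in the kernel of the frame-operator map supported on $\supp(w)$, and its positive and negative parts give subsets of $\supp(w)$ each of which (after rescaling) is a scaling; if both had full support one can still pass to $w \pm t(u-u')$ and push $t$ until a coordinate vanishes, producing a proper subset.

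The main obstacle I anticipate is the careful bookkeeping in the last step: matching the polytope-theoretic notion of vertex with the frame-theoretic notion of minimal scaling requires handling the normalization (minimal scalings are Parseval-type normalized, whereas an arbitrary boundary-minimal support might give a scaling of a subframe that needs rescaling to land in $\Poly$) and ensuring that "no proper scalable subset" and "not a proper convex combination" line up exactly, including degenerate cases where supports coincide. I would lean on the fact, recorded earlier via \cite{ziegler}, that this is "basic polytope theory," and cite the vertex description of polytopes defined by linear (in)equalities; the frame-theoretic side is then a direct unwinding of the definition of minimal scaling. Once $V = \ms$ is established, $\Poly = \conv\ms$ is the Minkowski--Weyl conclusion, and combined with the immediate equivalence "$w \in \Poly \iff w$ is a scaling" noted at the outset, the theorem follows.
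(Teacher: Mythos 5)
The paper does not prove this theorem: it is quoted verbatim from \cite{noteOnScalableFrames} and used as a black box, so there is no in-paper argument to compare yours against. Your proposal is a correct, self-contained proof and follows the route the paper alludes to (``basic polytope theory''): the equivalence $w\in\mathcal{P}\iff w$ is a scaling is definitional since $\sum_i w(i)f_if_i^*$ is the frame operator of $\{\sqrt{w(i)}f_i\}$; boundedness via the trace identity $\sum_i w(i)=n$ makes $\mathcal{P}$ a polytope; and Minkowski--Weyl reduces everything to identifying vertices with minimal scalings. Two small touch-ups: the vertex criterion for a standard-form polytope $\{w\geq 0: Aw=b\}$ is \emph{linear} (not affine) independence of the columns $f_if_i^*$ indexed by $\supp(w)$, though your kernel formulation is the right one; and in the converse direction you can avoid the case analysis about whether $u,u'$ have full support by noting that any nonzero $z$ in the kernel supported on $\supp(w)$ satisfies $\sum_i z(i)=0$ (take traces), hence has entries of both signs, so sliding $w+tz$ always kills a coordinate at some finite $t>0$ and produces the proper scalable subset directly.
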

In other words, $\mathcal{P}$ is the convex hull of the minimal scalings, and every scaling is a convex combination of minimal scalings.

Let us begin by first defining the scalability poset $\mathbb{S}$.

\begin{definition}
Let $\F=\{f_i\}_{i\in I}$ be a frame in $H_n$. We define its \emph{scalability poset} to be the set $\mathbb{S}_F\subseteq 2^I$ ordered by set inclusion, where
\begin{equation*}
	\mathbb{S}_F=\{J\subseteq I\, :\,\{f_j\}_{j\in J}\textrm{ is a scalable frame}\}.
\end{equation*}
We assume $\emptyset\in\mathbb{S}_F$.
\end{definition}

Given a vector $f\in H_n$, $\supp(f)$ is the set of indices where the vector $f$ has nonzero components. We observe that the empty cover of the scalability poset corresponds precisely to the support of the minimal scalings of a frame $\F$, i.e. $EC(\mathbb{S})=\{\supp (v_i)\}_{i=1}^m$. This result follows from the following proposition:

\begin{proposition}
	Let $\F$ be a frame in $\R^n$ and $\ms$ its set of minimal scalings. Then $\mathrm{supp}(v_i)\neq\mathrm{supp}(v_j)$ for all $i\neq j$.
\end{proposition}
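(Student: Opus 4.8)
The plan is to argue by contradiction: suppose $v_i\neq v_j$ are two distinct minimal scalings of $\F$ with $\supp(v_i)=\supp(v_j)=:J$, and then produce a \emph{proper} scalable subset of $\{f_j\}_{j\in J}$. Since $\{f_j\}_{j\in J}=\{f_k : v_i(k)>0\}$ and $v_i$ is minimal, that frame has no proper scalable subset, so this is the desired contradiction.

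\textbf{Key observation.} The crucial point is that the condition defining membership in $\mathcal{P}$, namely $\sum_k w(k)\,f_kf_k^* = I_n$, is affine-linear in $w$; hence the set of $w\in\R^k$ satisfying it is an affine subspace. Consequently, the line $\phi(t):=v_i+t(v_j-v_i)$ satisfies $\sum_k \phi(t)(k)\,f_kf_k^* = I_n$ for \emph{every} $t\in\R$, because it does so at $t=0$ and $t=1$. Thus $\phi(t)$ is a scaling of $\F$ (and $\{\sqrt{\phi(t)(k)}f_k\}$ a Parseval frame) as soon as $\phi(t)$ lies in $\R_{\geq 0}^k$.

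\textbf{Main steps.} Write $d=v_j-v_i\neq 0$. Since $v_i$ and $v_j$ have the same support $J$, the vector $d$ is supported on $J$, so $\phi(t)(k)=0$ for all $k\notin J$ and all $t$; and for $t\in[0,1]$ every coordinate $\phi(t)(k)$ with $k\in J$ is strictly positive (a convex combination of positive numbers). I would then walk outward from the segment $t\in[0,1]$: in the $+t$ direction if some coordinate of $\phi$ indexed by $J$ eventually hits zero there, and otherwise in the $-t$ direction — one of the two must occur, since $d\neq 0$ forces some affine coordinate function $t\mapsto\phi(t)(k)$, $k\in J$, to be non-constant and hence tend to $-\infty$ in one direction. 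This locates a finite parameter $t^\star\notin[0,1]$ with $\phi(t^\star)\geq 0$ coordinatewise but $\phi(t^\star)(k_0)=0$ for some $k_0\in J$. Setting $w^\star:=\phi(t^\star)$, we get a scaling of $\F$ with $\supp(w^\star)\subsetneq J$; moreover $\supp(w^\star)\neq\emptyset$, since $\sum_k w^\star(k)f_kf_k^*=I_n\neq 0$. Therefore $\{f_k : k\in\supp(w^\star)\}$ is a scalable frame and a proper subset of $\{f_j\}_{j\in J}$, contradicting the minimality of $v_i$. Hence $\supp(v_i)\neq\supp(v_j)$.

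\textbf{Main obstacle.} I expect the only real work to be the bookkeeping in the third step: choosing the correct direction along the line $\phi$, and carefully checking that the first exit parameter $t^\star$ is finite, lies outside $[0,1]$, and that $w^\star$ is a bona fide Parseval scaling whose support is a \emph{nonempty proper} subset of $J$. The directional case analysis rests only on the elementary fact that a non-constant affine function of $t$ is eventually negative in one of the two directions, so nothing deep is involved; the one subtlety to respect is that minimality must be invoked in its stated form — existence of a proper \emph{scalable} subframe — rather than via a spanning or dimension argument.
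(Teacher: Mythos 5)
Your proof is correct, and its core mechanism — move along a line in the set of scalings until some coordinate of the support first vanishes, then invoke minimality — is the same as the paper's. The difference is in which linear structure carries the line: the paper places $u$ and $v$ in $\ker(\tG)$ (the kernel of the Gramian of the diagram vectors, citing an external result) and travels along the ray $M(t)=u-tv$, whereas you exploit the fact that the Parseval condition $\sum_k w(k)f_kf_k^*=I_n$ is affine in $w$ and travel along the affine line through $v_i$ and $v_j$. Your choice buys two small simplifications: no appeal to the diagram-vector machinery is needed, and the exit point $\phi(t^\star)$ is automatically a genuine Parseval scaling, hence nonzero, so its support is a \emph{nonempty} proper subset of $J$ and the contradiction with minimality is immediate. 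The paper's ray does not preserve Parsevalness, so it must separately handle the degenerate possibility that the whole support vanishes at once — which is in fact what happens there, forcing $u=t_0v$ and then $t_0=1$ by a norm argument. Both arguments are sound; yours is marginally more self-contained, while the paper's sits more naturally alongside its other uses of $\ker(\tG)$ (e.g.\ in the proof of Theorem \ref{thm:noPrimeStrict}).
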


	\begin{proof}
	
	Let $u$ and $v$ be two minimal scalings with equal supports. From \cite[Prop. 3.6]{tfs} we know that $u$ and $v$ must belong to the kernel of $\tG$, the Gramian of the diagram vectors $\{\diag{f}_i\}_{i\in I}$. Consider the function $M(t) = u-tv\in \ker \tG$. Let $t_0>0$ be the smallest $t>0$ so that $\supp\{M(t)\}\subsetneq \supp(u)=\supp(v)$. Then by the definition of minimal scaling and since $M(t)$ is a scaling to a tight frame (which is not necessarily Parseval) for any $0\leq t\leq t_0$, it must be that $\supp(M(t_0))=\emptyset$. Hence, $u = t_0v$. Because both $u$ and $v$ are scalings which induce Parseval frames, this forces $t_0=1$ by a simple norm argument. Therefore, $u=v$, completing the proof.
\end{proof}

We now present results on when frames can be scaled to be prime. We prove these using the empty cover which contains useful information about when prime and non-prime scalings are possible. A scaling of a frame is \emph{prime} if the scaled frame does not contain any proper, tight subframes and \emph{non-prime} otherwise. The following theorem characterizes non-prime scalings.

\begin{theorem}\label{thm1}
A scaling is non-prime if and only if it is a convex combination of minimal scalings which can be partitioned into two orthogonal subsets.
\end{theorem}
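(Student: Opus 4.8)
The plan is to work entirely at the level of diagram vectors, translating the statement about a scaled frame into a statement about the kernel of the diagram Gramian $\tG$. Recall that for the unit-norm frame $\F=\{f_i\}_{i\in I}$, a scaling $w\in\mathcal{P}$ corresponds to a choice of nonnegative weights so that $\{\sqrt{w(i)}f_i\}_{i\in I}$ is Parseval; the scaled frame's diagram vectors are $\{w(i)\diag{f}_i\}_{i\in I}$ (up to the scalar normalization in the definition), and tightness of a subframe indexed by $J\subseteq\supp(w)$ is equivalent to $\sum_{j\in J} w(j)\diag{f}_j=0$. So a scaling $w$ is non-prime precisely when there is a nonempty proper subset $J\subsetneq \supp(w)$ with $\sum_{j\in J} w(j)\diag{f}_j=0$; since the complementary sum over $\supp(w)\setminus J$ is then also zero, this says $\supp(w)$ splits into two nonempty pieces each of which indexes a tight (scalable) subframe.

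First I would prove the reverse direction, which is the easier one. Suppose $w=\sum_{\ell} \alpha_\ell v_\ell$ is a convex combination of minimal scalings whose supports can be partitioned into two orthogonal subsets $A\sqcup B$ — here ``orthogonal'' should be read as: the union of the supports of the $v_\ell$ with $\ell\in A$ is disjoint from the union of the supports of the $v_\ell$ with $\ell\in B$, and each block spans a subspace (these are the two pieces). Then $w = w_A + w_B$ where $w_A=\sum_{\ell\in A}\alpha_\ell v_\ell$ is supported on the first block and $w_B$ on the second. Each $w_A$, $w_B$ is (a nonnegative multiple of) a scaling of its respective sub-collection to a tight frame, so the scaled frame $\{\sqrt{w(i)}f_i\}$ decomposes as a disjoint union of two tight subframes — hence it is non-prime, provided both blocks are nonempty, which holds because both $A$ and $B$ are nonempty.

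For the forward direction, assume the scaling $w$ is non-prime, so $\supp(w)=J\sqcup J^c$ (relative complement inside $\supp(w)$) with both index sets indexing tight, hence scalable, subframes. Restricting $w$ to $J$ gives a scaling $w|_J$ of $\{f_j\}_{j\in J}$ to a (Parseval-normalizable) tight frame, and likewise $w|_{J^c}$; by the structure theorem of \cite{noteOnScalableFrames} quoted above, each of $w|_J$ and $w|_{J^c}$ is a convex combination of minimal scalings \emph{of the respective subcollection}. The key point — and I expect this to be the main obstacle — is to argue that a minimal scaling of the subcollection $\{f_j\}_{j\in J}$ is, after rescaling, a minimal scaling of the full frame $\F$, and that its support is contained in $J$; minimality transfers because any scalable subset of its support would already be a scalable subset living inside $J$, contradicting minimality at the sub-level. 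Collecting the minimal scalings arising from the $J$-side into a set $A$ and those from the $J^c$-side into $B$, we get $w$ (after normalizing the convex weights so they sum to one overall, using that $\lVert w|_J\rVert$ and $\lVert w|_{J^c}\rVert$ are controlled by the Parseval normalization) expressed as a convex combination of minimal scalings of $\F$ whose supports partition into the two disjoint blocks $\bigcup_{j\in J}$ and $\bigcup_{j\in J^c}$, which are orthogonal in the required sense. The technical care needed is exactly in (i) checking that ``minimal scaling of a subframe'' lifts to ``minimal scaling of $\F$'' without the support leaking outside $J$, and (ii) reassembling the convex coefficients so the final expression is genuinely a single convex combination; both are bookkeeping once the diagram-vector dictionary is set up, but they are where an incorrect proof would go wrong.
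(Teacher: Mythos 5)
Your reverse direction is the same as the paper's: split the convex combination along the two orthogonal blocks, observe that each block is a nonnegative multiple of a tight scaling with support inside its own index set, and conclude the scaled frame contains a proper tight subframe. Your forward direction, however, takes a genuinely different route. The paper never restricts to the subcollections $\{f_j\}_{j\in J}$ and $\{f_j\}_{j\in J^c}$: it normalizes $w|_J$ and $w|_{J^c}$ by the frame bounds $\lambda$ and $1-\lambda$ so that both lie in the polytope $\mathcal{P}$ of scalings of the \emph{full} frame, expands each as a convex combination of minimal scalings of $\F$, and then uses the single identity $\bigl\langle \tfrac{1}{\lambda}w|_J,\tfrac{1}{1-\lambda}w|_{J^c}\bigr\rangle=0$ together with the nonnegativity of all coefficients and all minimal scalings to force every cross term to vanish; this simultaneously shows the two families of minimal scalings are disjoint and pairwise orthogonal, with no need to reason about subframes. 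Your route instead applies the structure theorem to each subcollection and then lifts: this requires your step (i), that a minimal scaling of $\{f_j\}_{j\in J}$ extends by zeros to a minimal scaling of $\F$ with support in $J$. That lemma is in fact true and for the reason you give (minimality is a condition on the support set alone, so it is insensitive to whether one views the scaling inside the subcollection or inside $\F$; and adding zero vectors does not change the frame operator, so the extension is still Parseval), and your step (ii) closes because the frame bounds $\lambda$ and $1-\lambda$ sum to $1$, so the rescaled coefficients form a genuine convex combination. So your argument is correct but would need those two points written out as a short lemma; what the paper's approach buys is that it sidesteps the lifting entirely, at the price of the (admittedly slick) term-by-term orthogonality argument. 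One small point common to both write-ups: for the reverse direction to produce a \emph{proper} tight subframe you must assume both orthogonal blocks carry strictly positive total weight, not merely that they are nonempty as sets of minimal scalings.
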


	\begin{proof}
	We first prove the forward direction. Let $w\in\mathbb{R}^n$ be a non-prime scaling of $\F=\{f_i\}_{i\in I}$. Since $\{\sqrt{w(i)}f_i\}_{i\in I}$ is a divisible frame, there exists $J\subseteq \{1,\ldots,k\}$ such that $\{\sqrt{w(i)}f_i\}_{i\in J}$ is a tight frame. Let $K=\{1,\ldots,k\}\setminus J$. Hence $\{\sqrt{ w(i)}f_i\}_{i\in K}$ is also a tight frame. Let $w_1$ and $w_2$ denote the scalings of $\F$ for the tight subframes $\{\sqrt{w(i)}f_i\}_{i\in J}$ and $\{\sqrt{w(i)}f_i\}_{i\in K}$, where some frame vectors have coefficients equal to 0. 

Observe that the subframes $\{\sqrt{w(i)}f_i\}_{i\in J}$ and $\{\sqrt{w(i)}f_i\}_{i\in K}$ are not Parseval. However, there exists $\lambda\in (0,1)$ such that $\{\sqrt{\frac{w(i)}{\lambda}}f_i\}_{i\in J}$ and $\{\sqrt{\frac{w(i)}{1-\lambda}}f_i\}_{i\in K}$ are Parseval frames and $\frac{1}{\lambda}w_1,\frac{1}{1-\lambda}w_2\in\mathcal{P}$. Thus we may write $\frac{1}{\lambda}w_1$ and $\frac{1}{1-\lambda}w_2$ as convex combinations of the minimal scalings $\{v_i\}_{i=1}^m$, i.e.
	
		\begin{equation*}
		\frac{1}{\lambda}w_1 = \sum_{i=1}^m \alpha_i v_i, \quad	\frac{1}{1-\lambda}w_2 =\sum_{i=1}^m\beta_i v_i,
		\end{equation*}
where $\alpha_i,\beta_i\geq 0$ for all $i$ and $\sum_{i=1}^m \alpha_i=\sum_{i=1}^m\beta_i=1$. Since $J\cap K=\emptyset$, $\mathrm{supp}\left(\frac{1}{\lambda}w_1\right)$ and $\mathrm{supp}\left(\frac{1}{1-\lambda}w_2\right)$ are disjoint, and hence $\frac{1}{\lambda}w_1$ and $\frac{1}{1-\lambda}w_2$ are orthogonal. This implies that
		\begin{align*}
		0&=\left\langle \frac{1}{\lambda}w_1,\frac{1}{1-\lambda}w_2\right\rangle\\
		&=\left\langle \sum_{i=1}^m \alpha_iv_i,\sum_{i=1}^m\beta_i v_i\right\rangle \\
		&=\underbrace{\sum_{i=1}^m\langle \alpha_iv_i,\beta_iv_i\rangle}_{A} + \underbrace{\sum_{i\neq j} \langle \alpha_iv_i,\beta_jv_j\rangle}_B .
		\end{align*}
As $\alpha_i,\beta_i\geq 0$ for all $i$ and each minimal scaling $v_i$ lies in $\mathbb{R}_{\geq 0}^k$, the terms $A$ and $B$ must both sum to 0. Considering each term separately, $A=0$ implies that whenever $\alpha_i>0$, $\beta_i=0$ and whenever $\beta_i>0$, $\alpha_i =0$. Thus, the minimal scalings which appear nontrivially in the expression $\sum_{i=1}^m \alpha_i v_i$ appear trivially in the expression $\sum_{i=1}^m\beta_i v_i$ and vice versa. Moreover, since $B=0$, the minimal scalings which appear nontrivially in $\sum_{i=1}^m \alpha_i v_i$ must be orthogonal to those which appear nontrivially in $\sum_{i=1}^m\beta_i v_i$. Letting
	\begin{equation*}
	C=\{v_j\, :\,\alpha_j>0\}, \quad D=\{v_\ell\, :\,\beta_\ell>0\},
	\end{equation*}
we see that $C$ and $D$ are orthogonal subsets of $\{v_i\}_{i=1}^m$. Since
	\begin{align*}
	w&=w_1+w_2\\
	&=\sum_{j\in C}\lambda\alpha_jv_j+\sum_{\ell\in D}(1-\lambda)\beta_\ell v_\ell\\
	&=\sum_{i\in C\cup D}\left[\lambda\alpha_i+(1-\lambda)\beta_i\right]v_i,
	\end{align*}
$w$ is a convex combination of minimal scalings which can be partitioned into two orthogonal subsets, which proves the forward direction.

For the reverse direction, suppose $w$ is a scaling which can be expressed as a convex combination of minimal scalings partitioned into two orthogonal subsets. Thus there exists  $C,D\subseteq \{1,\ldots,m\}$ such that $C \cap D = \emptyset$ and $\langle v_j,v_l\rangle =0$ for all $j\in C, \ell\in D$, and we can write
	\begin{equation*}
	w=\underbrace{\sum_{j\in C}\alpha_j v_j}_{w_1}+\underbrace{\sum_{\ell\in D}\beta_\ell v_\ell}_{w_2},
	\end{equation*}
where $\alpha_j,\beta_\ell \geq 0$ for all $j\in C$ and $\ell\in D$, and $\sum_{j\in C}\alpha_j +\sum_{\ell\in D}\beta_\ell=1$. Note that $w_1$ and $w_2$ are both tight scalings of $\F$ and $\mathrm{supp}(w_1)\cap\mathrm{supp}(w_2)=\emptyset$. This implies that
	\begin{equation*}
	\{\sqrt{w_1(i)}f_i\}_{i\in I},\{\sqrt{w_2(i)}f_i\}_{i\in I}\subseteq\{\sqrt{w(i)}f_i\}_{i\in I}
	\end{equation*}
are both tight frames, and therefore $w$ is a non-prime scaling. This completes the proof. 
	\end{proof}

\begin{example}
Let $\F=\{e_1,e_2,-e_1,-e_2\}\subseteq \mathbb{R}^2$. Its minimal scalings are $v_1=(1,1,0,0)$, $v_2=(0,0,1,1)$, $v_3=(1,0,0,1)$, $v_4=(0,1,1,0)$ and note that $v_1\perp v_2$ and $v_3\perp v_4$. Let $\alpha_1,\alpha_2\in (0,1)$, $\alpha_1+\alpha_2=1$. By Theorem \ref{thm1}, $w_1=\alpha_1v_1+\alpha_2v_2$ and $w_2=\alpha_1v_3+\alpha_2v_4$ are non-prime scalings. Furthermore, any non-prime scaling of $\F$ can be expressed as $w_1$ or $w_2$. 

\end{example}

\begin{definition}
A scaling is called \emph{strict} if all of its entries are strictly positive.  
\end{definition}

\begin{theorem}\label{thm:noPrimeStrict}
A unit-norm frame $F = \fI$ has no prime strict scalings if and only if the empty cover of its scalability poset can be partitioned into two disjoint sets $A$ and $B$ so that 
$\bigcup_{K\in A}K$ and $\bigcup_{K\in B}K$ are disjoint sets of indices.  
\end{theorem}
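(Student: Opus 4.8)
The plan is to translate everything into the geometry of the scaling polytope $\mathcal{P}=\conv\ms$, where $\ms$ is the set of minimal scalings of $F$, using two facts already available: $w$ is a scaling of $F$ iff $w\in\mathcal{P}$, and $EC(\mathbb{S}_F)=\{\supp(v_i)\}_{i=1}^m$ with the supports pairwise distinct. First I would reduce primality of a \emph{strict} scaling to a linear condition. If $w$ is strict then $w(i)>0$ for every $i$, so any proper tight subframe of $\{\sqrt{w(i)}f_i\}_{i\in I}$ is exactly $\{\sqrt{w(i)}f_i\}_{i\in J}$ for some $J$ with $\emptyset\neq J\subsetneq I$ (call such $J$ \emph{admissible}), and this subframe is tight iff its frame operator $S_J(w):=\sum_{i\in J}w(i)f_if_i^*$ is a scalar multiple of $I_n$ — necessarily a positive one, since $\operatorname{tr}S_J(w)=\sum_{i\in J}w(i)>0$ — in which case $S_{I\setminus J}(w)=I_n-S_J(w)$ is scalar as well. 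Hence $w$ is a non-prime strict scaling iff $S_J(w)$ is scalar for some admissible $J$. Writing $L_J:=\{w\in\mathcal{P}:S_J(w)\text{ is scalar}\}$, an affine section of $\mathcal{P}$, and recalling that the strict scalings are precisely the relative interior $\inter(\mathcal{P})$ (we may assume $F$ admits a strict scaling, i.e.\ $\bigcup_i\supp(v_i)=I$; the contrary situation is degenerate), the theorem becomes: $\inter(\mathcal{P})\subseteq\bigcup_{J\text{ admissible}}L_J$ if and only if $EC(\mathbb{S}_F)$ admits the stated partition.

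The heart of the proof is a dichotomy for fixed admissible $J$: $L_J=\mathcal{P}$ if and only if no minimal scaling straddles $J$, meaning $\supp(v_k)\subseteq J$ or $\supp(v_k)\subseteq I\setminus J$ for all $k$. Indeed $S_J(v_k)=\sum_{i\in J\cap\supp(v_k)}v_k(i)f_if_i^*$ equals $I_n$ when $\supp(v_k)\subseteq J$, equals $0$ when $\supp(v_k)\cap J=\emptyset$, and when $v_k$ straddles $J$ it is neither $0$ (its trace is positive) nor a nonzero scalar matrix (otherwise $J\cap\supp(v_k)$ would be a nonempty proper scalable subset of $\supp(v_k)$, contradicting minimality of $v_k$). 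Since $S_J$ is affine and $\ms$ is exactly the vertex set of $\mathcal{P}$ — each $v_k$ is extreme, because $v_k=\tfrac12(a+b)$ with $a,b\in\mathcal{P}$ forces $\supp(a)\subseteq\supp(v_k)$, hence $\supp(a)=\supp(v_k)$ by minimality, hence $a=v_k$ by the earlier fact that two minimal scalings with the same support coincide — the set $L_J$ contains $\mathcal{P}$ iff it contains every $v_k$ iff no $v_k$ straddles $J$. Consequently there is an admissible $J$ with $L_J=\mathcal{P}$ exactly when the supports $\{\supp(v_k)\}$ partition into two nonempty families with pairwise disjoint unions — equivalently, when the \emph{support-intersection graph} $G$ on $\ms$ (an edge joining $v_i,v_j$ iff their supports meet) is disconnected — which is precisely the stated partition condition on $EC(\mathbb{S}_F)$.

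Both implications now follow. If $EC(\mathbb{S}_F)=A\sqcup B$ with $\bigcup_{K\in A}K$ and $\bigcup_{K\in B}K$ disjoint, put $J=\bigcup_{K\in A}K$; then $L_J=\mathcal{P}$, and for a strict $w=\sum_k t_kv_k$ the scalar $\lambda$ with $S_J(w)=\lambda I_n$ equals $\sum_{\supp(v_k)\subseteq J}t_k$, which lies strictly between $0$ and $1$ because strictness of $w$ forces both an $A$-type and a $B$-type minimal scaling to occur with positive weight. So $\{\sqrt{w(i)}f_i\}_{i\in J}$ is a proper tight subframe, every strict scaling of $F$ is non-prime, and $F$ has no prime strict scaling. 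Conversely, if $EC(\mathbb{S}_F)$ admits no such partition then $G$ is connected, so by the dichotomy every $L_J$ ($J$ admissible) is a \emph{proper} affine section of $\mathcal{P}$, hence of dimension strictly below $\dim\mathcal{P}$ and so nowhere dense in $\mathcal{P}$; as there are finitely many admissible $J$, we get $\inter(\mathcal{P})\not\subseteq\bigcup_JL_J$, and any strict scaling avoiding all the $L_J$ is prime — so $F$ has a prime strict scaling, which establishes the remaining implication by contraposition.

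The step I expect to be the main obstacle — or at least to need the most care — is the final genericity argument: confirming that the strict scalings are exactly $\inter(\mathcal{P})$ and that $\ms$ is genuinely the vertex set of $\mathcal{P}$ (both resting on the ``equal support implies equal'' property of minimal scalings), and that a relatively open, full-dimensional piece of a polytope cannot be covered by finitely many lower-dimensional sections. A secondary, bookkeeping-level issue is the degenerate case $\bigcup_i\supp(v_i)\subsetneq I$, where $F$ admits no strict scaling and the two sides of the equivalence must be reconciled directly. Everything else — the reduction of the first paragraph and the dichotomy of the second — is routine once the scaling polytope is in play, and the forward implication can alternatively be extracted from Theorem~\ref{thm1}.
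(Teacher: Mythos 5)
Your proof is correct, and it shares the paper's overall architecture: reduce ``no prime strict scaling'' to covering the relative interior of the scaling polytope $\mathcal{P}=\conv\ms$ by finitely many linear slices indexed by proper subsets $J$, then show such a covering forces one slice to equal all of $\mathcal{P}$. The key lemmas differ, though. The paper routes the reduction through Theorem~\ref{thm1} (non-prime iff a convex combination of minimal scalings splitting into two orthogonal families), realizes the slices as $\mathcal{P}\cap C_J$ with $C_J=(\Span\{e_j: j\in J\}\cap\ker\tG)\oplus(\Span\{e_j: j\in J^c\}\cap\ker\tG)$, and invokes a lemma that a convex set contained in a finite union of subspaces lies in one of them. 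You instead work directly with the partial frame operators $S_J(w)=\sum_{i\in J}w(i)f_if_i^*$, isolate the dichotomy ``$L_J=\mathcal{P}$ iff no minimal scaling straddles $J$'' (which is the paper's extreme-point argument made explicit, and is exactly where minimality of the $v_k$ enters), and replace the covering lemma by the observation that a proper affine slice of $\mathcal{P}$ has dimension strictly below $\dim\mathcal{P}$, so finitely many such slices cannot cover $\inter(\mathcal{P})$. Since $L_J$ and $\mathcal{P}\cap C_J$ are the same set, the difference is one of formulation rather than substance; what your version buys is independence from Theorem~\ref{thm1} and from the diagram-vector/Gramian machinery, plus explicit attention to the degenerate case $\bigcup_i\supp(v_i)\subsetneq I$ --- a case the paper's proof (and, strictly speaking, the theorem statement, since there ``no prime strict scalings'' holds vacuously while the empty cover need not admit the partition) passes over. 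Your flagged worry about identifying $\inter(\mathcal{P})$ with the strict scalings is harmless as actually used: the forward direction argues about arbitrary strict scalings via $L_J=\mathcal{P}$, and the converse needs only the inclusion of $\inter(\mathcal{P})$ in the strict scalings, which holds once $\bigcup_i\supp(v_i)=I$.
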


Before proving this theorem, we present the following lemma regarding covering convex sets by vector spaces.  Its proof, which reduces the claim to proving that any finite number of points in a convex set $C$ must be contained in a single vector space $V$, is elementary and is therefore omitted.

\begin{lemma}
If a convex set $C$ in $\R^n$ is contained in $\bigcup_{i \in I} V_i$, where each $V_i$ is a subspace of $\R^n$, with $|I|$ finite, then there is at least one index $i$ such that $C \subseteq V_i$.  
\end{lemma}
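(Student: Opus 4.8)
The plan is to prove the lemma in two stages: a reduction from arbitrary convex sets to finite point-sets, and then the finite case itself.

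First I would establish the reduction. Suppose, toward a contradiction, that $C \not\subseteq V_i$ for every $i \in I$. Since $I$ is finite, I may choose for each $i \in I$ a point $x_i \in C \setminus V_i$. The collection $\{x_i\}_{i \in I}$ is a finite subset of $C$. If I can show that any finite subset of $C$ lies entirely within a single subspace $V_j$, then in particular $\{x_i\}_{i \in I} \subseteq V_j$ for some $j \in I$; but then $x_j \in V_j$ contradicts the choice $x_j \notin V_j$. Hence $C \subseteq V_i$ for some $i$, as claimed. This isolates the one fact that remains to be proved, namely that every finite subset of $C$ is contained in one $V_i$.

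Then I would prove the finite case. Given a finite set $S \subseteq C$, let $K = \conv(S)$, which lies in $C$ (as $C$ is convex) and hence in $\bigcup_{i\in I} V_i$. I would work inside the affine hull $\aff(K)$, on which $K$ has nonempty relative interior $\inter(K)$. Partition $I$ into those $i$ with $V_i \supseteq K$ and those with $V_i \not\supseteq K$. For the latter, $V_i \cap \aff(K)$ is a proper affine subspace of $\aff(K)$ (were it all of $\aff(K)$, then $K \subseteq V_i$, a contradiction), hence of strictly smaller dimension, so $K \cap V_i$ has empty relative interior in $K$. A relatively open subset of an affine space of dimension $d$ cannot be covered by finitely many affine subspaces of dimension less than $d$; thus the finite union $\bigcup_{V_i \not\supseteq K}(K \cap V_i)$ cannot cover $\inter(K)$, and I may choose $z \in \inter(K)$ lying in no $V_i$ with $V_i \not\supseteq K$. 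Since $z \in K \subseteq \bigcup_{i\in I} V_i$, some $V_{i_0}$ contains $z$, and this index $i_0$ must be one for which $V_{i_0} \supseteq K$. Therefore $S \subseteq K \subseteq V_{i_0}$, completing the finite case.

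The reduction step is routine; the main obstacle is the finite case, and within it the fact that a relatively open set is not covered by finitely many lower-dimensional affine subspaces. I would justify this elementarily: each proper affine subspace meets $\aff(K)$ in a set of $d$-dimensional Lebesgue measure zero, so a finite union remains null and cannot engulf the positive-measure set $\inter(K)$. The degenerate cases cause no trouble: if $K$ reduces to a single point then $\inter(K) = K$ lies directly in some $V_{i_0} \supseteq K$, and the nonemptiness of $\inter(K)$ for a nonempty convex set is standard.
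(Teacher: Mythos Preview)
Your proof is correct and follows precisely the approach the paper indicates: the paper states (without details) that the proof ``reduces the claim to proving that any finite number of points in a convex set $C$ must be contained in a single vector space $V$,'' and your first paragraph carries out exactly this reduction. You then go further than the paper by actually supplying the finite case via a standard relative-interior/measure argument, which is one natural way to complete what the paper leaves as elementary.
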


To proceed with the proof of the theorem, we first make an observation. Let $\Poly$ be the polytope consisting of all scalings of a given frame in $\R^n$. Using the interpretation of scalings in \cite{tfs}, this polytope is the intersection of the vector space $\ker(\tG)$, the null space of the Gramian of the diagram vectors for the frame, with the convex region $R = \R_{\geq 0}^k\cap \{v \in \R^k:\|v\|_1=n\} = \{v\in \R^k: \sum_{i=1}^k v_i = n,\; v_i\geq 0 \;\forall\, i\}$. Using this we prove our result on prime strict scalings.

\begin{proof}[Proof of Theorem \ref{thm:noPrimeStrict}]
By Theorem \ref{thm1} we know that a scaling is non-prime if and only if it can be written as a non-trivial convex combination of two orthogonal scalings.  Consider any subset $J\sub I=\{1,\ldots,k\}$ and its complement $J^c$.  Some scalings will be supported on $J$, and some scalings will be supported on $J^c$, and some scalings are not supported on either.  The set of all vectors (not scalings) supported on $J$ forms a vector space $A = \Span\{e_j\;:\; j \in J\}$, and the set of all scalings supported on $J$ will be the set $A \cap \ker(\tG) \cap R$.  Similarly, the set of all scalings supported on $J^c$ is $B \cap \ker(\tG) \cap R$ where $B=\Span\{e_j\;:\; j \in J^c\}$.  Then any scaling that is a nontrivial non-negative convex combination of a vector in $A \cap \ker(\tG) \cap R$ and a vector in $B \cap \ker(\tG) \cap R$, and is itself a scaling, will be a non-prime scaling by Theorem \ref{thm1}.  Because $A$ and $B$ are orthogonal subspaces with disjoint supports, this set corresponds exactly to $((A \cap \ker(\tG)) \oplus (B \cap \ker(\tG))) \cap \inter(R)$.  Note that the relative interior of $R$, $\inter(R)$, is those points in $R$ with all components strictly positive.  We restrict to the interior to avoid trivial combinations, which are not prime, as well as any non-strict scalings.  Observe that this region is of the form $C_J \cap \inter(R)$ where $C_J$ is a vector space dependent upon $J$.

Now observe that every non-prime strict scaling will be in $C_J \cap \inter(R)$ for some $J$, because any two orthogonal scalings will fall into some partition $(J|J^c)$, and as noted above, every scaling in $C_J \cap \inter(R)$ will be non-prime.  Hence the property that a frame has no prime and strict scalings is equivalent to $\inter(\Poly)\sub\cup C_J$.  Yet $\inter(\Poly)$ is convex, and therefore the preceding lemma applies. It follows that there exists some $C_J$ containing all of $\inter(\Poly)$.  Because $C_J$ (a finite dimensional subspace) is closed, $C_J$ contains all of $\Poly$.  

Therefore, every point in $\Poly$ is a linear combination of some scaling supported on $J$ and one supported on $J^c$; in fact, it is a convex combination, because every point in $\Poly$ has only positive coefficients, and the two scalings have disjoint support.  Because minimal scalings (corresponding to the empty cover elements) cannot be written as convex combinations of one another, it must be the case that every element of the empty cover is either a subset of $J$ or a subset of $J^c$.  This proves the forward direction of the theorem.  

For the converse, assume that the empty cover of the scalability poset can be partitioned into two sets $A$ and $B$ so that $\bigcup_{K\in A}K$ and $\bigcup_{K\in B}K$ are disjoint sets of indices.  Then every strict scaling can be written as a non-negative linear combination $\sum_{a \in A} \lambda_a s(a) + \sum_{b \in B} \lambda_b s(b)$ where $s(a)$ is the minimal scaling associated to the empty cover element $a\in EC(\mathbb{S}_F)$.  But since $A$ and $B$ have disjoint supports, this is a sum of two orthogonal scalings, and it must be non-trivial if the scaling is strict.  Hence every strict scaling must be non-prime.  This completes the proof.  
\end{proof}

\begin{proposition}\label{usupp}
Let $A\in\mathbb{S}_F$ and $J=\{j\, :\,\mathrm{supp}(v_j)\subseteq A\}$. Suppose that $\{v_j\}_{j\in J}$ cannot be partitioned into orthogonal subsets. If $\mathrm{supp}(v_j)\not\subseteq\bigcup\limits_{\ell\in J\setminus\{j\}}\mathrm{supp}(v_\ell)$ for all $j$, then every scaling $w$ with $\mathrm{supp}(w)=A$ is prime.
\end{proposition}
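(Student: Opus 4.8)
The plan is to argue by contradiction: suppose $w$ is a scaling of $F$ with $\mathrm{supp}(w)=A$ that is \emph{not} prime, and derive a forbidden orthogonal partition of $\{v_j\}_{j\in J}$. By Theorem~\ref{thm1}, a non-prime scaling can be written as a convex combination of minimal scalings that splits into two orthogonal subsets. Concretely, there are disjoint index sets $C,D\subseteq\{1,\ldots,m\}$ with $\langle v_i,v_{i'}\rangle=0$ for all $i\in C,\,i'\in D$, and coefficients $c_i>0$ for $i\in C\cup D$, so that $w=\sum_{i\in C\cup D}c_iv_i$; moreover both $C$ and $D$ are nonempty, since the two tight subframes witnessing non-primality are each proper and hence correspond to nonzero scalings $w_1,w_2$, each of which is a convex combination of minimal scalings using at least one $v_i$.

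First I would do the support bookkeeping. Since all the $v_i$ and all the $c_i$ are nonnegative, there is no cancellation in $w=\sum_{i\in C\cup D}c_iv_i$, so $\mathrm{supp}(w)=\bigcup_{i\in C\cup D}\mathrm{supp}(v_i)$; in particular every $i\in C\cup D$ has $\mathrm{supp}(v_i)\subseteq\mathrm{supp}(w)=A$, which means $C\cup D\subseteq J$. Setting $A_1=\bigcup_{i\in C}\mathrm{supp}(v_i)$ and $A_2=\bigcup_{i\in D}\mathrm{supp}(v_i)$, the orthogonality of the nonnegative vectors $\{v_i\}_{i\in C}$ and $\{v_i\}_{i\in D}$ forces $A_1\cap A_2=\emptyset$, while $A_1\cup A_2=A$. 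Thus $A=A_1\sqcup A_2$.

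The key step is to invoke the hypothesis $\mathrm{supp}(v_j)\not\subseteq\bigcup_{\ell\in J\setminus\{j\}}\mathrm{supp}(v_\ell)$ for every $j\in J$: this furnishes, for each $j\in J$, a coordinate $p_j\in\mathrm{supp}(v_j)$ that lies in no other $\mathrm{supp}(v_\ell)$ with $\ell\in J$. Since $j\in J$ gives $p_j\in\mathrm{supp}(v_j)\subseteq A=A_1\sqcup A_2$, we have $p_j\in\mathrm{supp}(v_i)$ for some $i\in C\cup D\subseteq J$; the defining property of $p_j$ then forces $i=j$, so $j\in C\cup D$. As $j\in J$ was arbitrary, $J\subseteq C\cup D$, and together with $C\cup D\subseteq J$ this yields $J=C\cup D$. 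Hence $\{v_j\}_{j\in J}=\{v_i\}_{i\in C}\sqcup\{v_i\}_{i\in D}$ is a partition of $\{v_j\}_{j\in J}$ into two nonempty, mutually orthogonal subsets, contradicting the assumption that no such partition exists. Therefore $w$ is prime. The only delicate points are ensuring both $C$ and $D$ are genuinely nonempty (so that the induced split of $\{v_j\}_{j\in J}$ is an honest partition) and checking that each ``private'' coordinate $p_j$ really sits inside $A$; both are handled above, and beyond this the argument is routine bookkeeping with supports of nonnegative vectors, so I do not anticipate a substantive obstacle.
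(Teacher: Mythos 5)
Your proposal is correct and follows essentially the same route as the paper: argue by contradiction, apply Theorem~\ref{thm1} to obtain an orthogonally partitioned family of minimal scalings whose supports lie in $A$ and cover $A$, and then use the ``private coordinate'' hypothesis to force that family to be all of $\{v_j\}_{j\in J}$, contradicting non-partitionability. The paper merely applies the two hypotheses in the opposite order (first concluding $L\subsetneq J$ from non-partitionability, then contradicting the support condition at some $p\in J\setminus L$), and your version is in fact slightly more careful in verifying that both orthogonal parts are nonempty.
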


	\begin{proof}
	Suppose toward a contradiction that $w$ is a non-prime scaling with $\mathrm{supp}(w)=A$. By Theorem \ref{thm1}, there exists $L\subseteq J$ such that $w$ is a convex combination of minimal scalings $\{v_\ell\}_{\ell\in L}$, where $\{v_\ell\}_{\ell\in L}$ can be partitioned into orthogonal subsets and $\bigcup_{\ell\in L}\mathrm{supp}(v_\ell)=A$. Since $\{v_\ell\}_{\ell\in L}$ can be partitioned but $\{v_j\}_{j\in J}$ cannot be partitioned, $L\subsetneq J$. Let $p\in J\setminus L$. Then $\mathrm{supp}(v_p)\subseteq \bigcup_{\ell\in J\setminus\{p\}}\mathrm{supp}(v_\ell)$, a contradiction. Hence every scaling $w$ with $\mathrm{supp}(w)=A$ is a prime scaling.
	\end{proof}

The following example shows that if $\{v_j\}_{j\in J}$ cannot be partitioned into orthogonal subsets and there exists $j$ such that  $\mathrm{supp}(v_j)\subseteq\bigcup\limits_{\ell\in J\setminus\{j\}}\mathrm{supp}(v_\ell)$, both prime and non-prime scalings are possible.

\begin{example}
Let $\F=\left\{\colvec{2}{1}{0}, \colvec{2}{-\frac{1}{2}}{\frac{\sqrt{3}}{2}}, \colvec{2}{-\frac{1}{2}}{-\frac{\sqrt{3}}{2}}, \colvec{2}{0}{1},\colvec{2}{-\frac{\sqrt{3}}{2}}{-\frac{1}{2}},\colvec{2}{\frac{\sqrt{3}}{2}}{-\frac{1}{2}}\right\}\sub\R^2$. The minimal scalings of $\F$ are $v_1=\left(\frac{2}{3},\frac{2}{3},\frac{2}{3},0,0,0\right), v_2=\left(0,0,0,\frac{2}{3},\frac{2}{3},\frac{2}{3}\right), v_3=(1,0,0,1,0,0), v_4=(0,1,0,0,1,0), v_5=(0,0,1,0,0,1)$. Note that the minimal scalings cannot be partitioned into orthogonal subsets and $\mathrm{supp}(v_i)\subseteq\bigcup_{j\in I\setminus\{i\}}\mathrm{supp}(v_j)$ for $i=1,\ldots,5$. The scaling $w_1=\sum_{i=1}^5\alpha_iv_i$, where $\alpha_i\neq\alpha_j$ for $i\neq j$ and $\alpha_i>0$ for all $i$ is prime. On the other hand, the scaling $w_2=\sum_{i=1}^5 \frac{1}{5}v_i$ is non-prime, although it is a convex combination of minimal scalings which cannot be partitioned into orthogonal subsets. Note that $w_2$ can be written as a convex combination of the minimal scalings in many ways, e.g.
	\begin{align*}
	w_2&=\frac{1}{2}(v_1+v_2)\\
	&=\frac{1}{3}(v_3+v_4+v_5)\\
	&=\frac{1}{4}(v_1+v_2)+\frac{1}{6}(v_3+v_4+v_5),
	\end{align*}
Since there exists at least one way of expressing $w_2$ as a convex combination of minimal scalings which can be partitioned into orthogonal subsets, $w_2$ is a non-prime scaling  by Theorem \ref{thm1}. 
\end{example}

We conclude this paper with an open problem $(Q)$: given a poset $P$ with $I\in P$ and a unit-norm frame $\F=\fI\subseteq\mathbb{R}^n$, does there exist a strictly scaled version $\widehat{\F}$ of $\F$ so that the factor poset of $\widehat{\F}$ is $P$? In this paper we have considered a particular case of $(Q)$, namely, when $P$ corresponds to a prime configuration: $P=\{\emptyset,I\}$ (Theorem \ref{thm:noPrimeStrict}). In general note that an obvious necessary condition for $(Q)$ to hold is that $P\subseteq\mathbb{S}_F$. While we have not resolved $(Q)$, we believe that it can be answered based on the structure of scalability posets alone. Along these lines, we close with a conjecture.

\begin{conjecture}
If the unit-norm frames $F =\fI,G=\gI\sub\R^n$ have the same scalability poset and $G$ is a tight frame, then $F$ can be strictly scaled to have the same factor poset as $G$.
\end{conjecture}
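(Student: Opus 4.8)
The plan is to use the common scalability poset to transport the tight-frame structure of $G$ onto a scaling of $F$. Throughout, for a frame $H=\{h_i\}_{i\in I}$ let $\widetilde{G}_H$ denote the Gramian of its diagram vectors, so that a weight $w\in\R_{\geq 0}^k$ is a tight scaling of $H$ precisely when $w\in\ker\widetilde{G}_H$, and $J\in 2^I$ lies in the factor poset of $\{\sqrt{w(i)}h_i\}$ precisely when $w|_J\in\ker\widetilde{G}_H$, where $w|_J$ denotes $w$ with all coordinates outside $J$ zeroed (this uses $\widetilde{cf}=c^2\widetilde f$ and the diagram-vector characterization of tightness). Since $G$ is tight we have $I\in\mathbb{S}_G=\mathbb{S}_F$, so $F$ is scalable. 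By the proposition that a frame's minimal scalings have pairwise distinct supports, the minimal scalings of $F$ and of $G$ are both indexed by the common empty cover $EC(\mathbb{S}_F)=EC(\mathbb{S}_G)$: for each $S$ in this set there is a unique minimal scaling $v^S_F$ of $F$ and a unique minimal scaling $v^S_G$ of $G$, each supported exactly on $S$, and one checks that, up to positive scalars, $v^S_F$ (resp.\ $v^S_G$) is the only tight scaling of $F$ (resp.\ of $G$) supported on a subset of $S$ — because $\{f_j\}_{j\in S}$ has no proper scalable subset, its scaling polytope is a single point. One also checks $EC(\mathbb{F}_G)\subseteq EC(\mathbb{S}_G)$: an element $S\in EC(\mathbb{F}_G)$ gives a prime tight subframe of $G$, which is scalable, hence contains the support of some minimal scaling of $G$, and minimality of $S$ in $\mathbb{F}_G$ forces that support to be all of $S$.

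Next, transport the barycenter. Because $G$ is tight, the uniform weight $w_0=(n/k,\dots,n/k)$ is a strict scaling of $G$, and $\{\sqrt{w_0(i)}g_i\}$ is a global rescaling of $G$, so it has the same factor poset $\mathbb{F}_G$. Using the polytope description of scalings, write $w_0=\sum_S\lambda_S\,v^S_G$ as a nonnegative combination of minimal scalings; strict positivity of $w_0$ gives $\bigcup_{\lambda_S>0}S=I$. Define $w:=\sum_S\lambda_S\,v^S_F$. Since each $v^S_F$ is supported exactly on $S$ and $\bigcup_{\lambda_S>0}S=I$, the vector $w$ is strictly positive, lies in $\ker\widetilde{G}_F$, and after rescaling by a positive constant is a strict scaling of $F$; rescaling does not change a factor poset, so it suffices to show that the factor poset of $\{\sqrt{w(i)}f_i\}$ is exactly $\mathbb{F}_G$.

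To compare the two factor posets, fix $J\subseteq I$ and classify each $S\in EC(\mathbb{S}_F)$ according to $J$. If $S\subseteq J$ then $v^S_H|_J=v^S_H\in\ker\widetilde{G}_H$; if $S\cap J=\emptyset$ then $v^S_H|_J=0$; and if $S$ \emph{straddles} $J$ (meaning $\emptyset\neq S\cap J\subsetneq S$) then $v^S_H|_J$ restricts $v^S_H$ to the proper, hence non-scalable, subset $S\cap J$, so $\rho^S_H(J):=\sum_{j\in S\cap J}v^S_H(j)\widetilde{h_j}$ is a \emph{nonzero} vector (otherwise $S\cap J$ would be a proper scalable subset of $\{h_j\}_{j\in S}$). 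Summing over $S$ and using $v^S_F\in\ker\widetilde{G}_F$, $v^S_G\in\ker\widetilde{G}_G$ we get
\[
\sum_{j\in J}w(j)\widetilde{f_j}=\sum_{S\ \text{straddles}\ J}\lambda_S\,\rho^S_F(J)
\qquad\text{and}\qquad
\sum_{j\in J}w_0(j)\widetilde{g_j}=\sum_{S\ \text{straddles}\ J}\lambda_S\,\rho^S_G(J).
\]
Hence $J$ lies in the factor poset of $\{\sqrt{w(i)}f_i\}$ iff $\sum_{S\text{ straddles }J}\lambda_S\rho^S_F(J)=0$, and $J\in\mathbb{F}_G$ iff the corresponding sum with $\rho^S_G$ vanishes. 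If no $S$ with $\lambda_S>0$ straddles $J$, both sums are empty, so $J$ lies in both posets; if exactly one such $S$ straddles $J$, both sums are a positive multiple of a single nonzero vector, so $J$ lies in neither. These cases already show that the two factor posets agree on every $J$ except possibly those straddled by two or more minimal-scaling supports with $\lambda_S>0$.

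The main obstacle is precisely this last case: showing $\sum_S\lambda_S\rho^S_F(J)=0\iff\sum_S\lambda_S\rho^S_G(J)=0$ when $J$ has several straddlers. For arbitrary frames with the same scalability poset these ``straddling'' relations among diagram vectors can differ, so tightness of $G$ must be used. The route I would pursue is first to choose the representation $w_0=\sum_S\lambda_S v^S_G$ so that \emph{no} straddling cancellation occurs for $G$ — equivalently, so that $\mathbb{F}_G$ equals the purely combinatorial poset $\{J:\text{no }S\text{ with }\lambda_S>0\text{ straddles }J\}$ — which should be possible because $\mathbb{F}_G$ is generated by the prime tight subframes of $G$, i.e.\ by disjoint unions of certain empty-cover elements of $\mathbb{S}_G$. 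One then needs that, with these same $\lambda_S$, no straddling cancellation can occur for $F$ either, so that the factor poset of $\{\sqrt{w(i)}f_i\}$ is also this combinatorial poset, hence $\mathbb{F}_G$. For this I would exploit that each $v^S_F$ is, up to a scalar, the unique tight scaling supported on $S$ (so its values on $S$ are rigidly determined) and that the incidence pattern of the sets $S$, together with their orthogonality relations — which by disjointness of supports depend only on the common empty cover — are identical for $F$ and $G$. Carrying out this final step is the heart of the conjecture; it is tightly connected to matching the face structures of the scaling polytopes $\mathcal{P}_F$ and $\mathcal{P}_G$, which share the combinatorics of their vertex supports but are not literally the same polytope.
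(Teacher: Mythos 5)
First, note that the paper does not prove this statement: it is posed as a conjecture and explicitly left open, so there is no proof of record to compare yours against, and your proposal must stand on its own. As you yourself concede in your final paragraph, it does not: the reduction to the question of whether the ``straddling cancellations'' $\sum_{S}\lambda_S\rho^S_F(J)=0$ and $\sum_{S}\lambda_S\rho^S_G(J)=0$ occur for exactly the same sets $J$ is a clean reformulation, and your handling of the cases with zero or one straddler is correct, but the multi-straddler case is precisely where the content of the conjecture lives (the paper's own hexagonal example, where $w_2=\frac12(v_1+v_2)=\frac13(v_3+v_4+v_5)$, shows how delicately such cancellations depend on the frame and on the chosen representation). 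Isolating the difficulty is not the same as resolving it.

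Second, one auxiliary claim you lean on for the proposed resolution is false. You assert $EC(\mathbb{F}_G)\subseteq EC(\mathbb{S}_G)$, arguing that a prime tight subframe indexed by $S$ contains the support $S'$ of a minimal scaling and that minimality of $S$ in $\mathbb{F}_G$ forces $S'=S$. But $S'\in\mathbb{S}_G$ does not imply $S'\in\mathbb{F}_G$, so minimality in the \emph{factor} poset gives no contradiction when $S'\subsetneq S$. Concretely, take unit vectors $g_1,g_2,g_3\in\R^2$ whose diagram vectors positively span $\R^2$ but do not sum to zero, and append unit vectors $g_4,g_5$ whose diagram vectors complete the total sum to zero (possible since the diagram map is surjective onto the unit circle when $n=2$ and the residual has norm less than $2$ for angles near $0^\circ,120^\circ,240^\circ$). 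Generically $\{g_i\}_{i=1}^5$ is a prime tight frame, so $\{1,\dots,5\}\in EC(\mathbb{F}_G)$, yet $\{1,2,3\}$ is a scalable proper subset, so $\{1,\dots,5\}\notin EC(\mathbb{S}_G)$. This also breaks the subsequent claim that $\mathbb{F}_G$ is generated by disjoint unions of empty-cover elements of $\mathbb{S}_G$, which is the mechanism by which you hoped to choose a cancellation-free representation $w_0=\sum_S\lambda_S v^S_G$. The earlier parts of your setup --- the bijection between minimal scalings of $F$ and of $G$ via the common empty cover, the uniqueness up to scale of the tight scaling supported on a minimal-support set, and the identity $\sum_{j\in J}w(j)\widetilde{f_j}=\sum_{S}\lambda_S\rho^S_F(J)$ over straddling $S$ --- are sound and would be a reasonable starting point, but the conjecture remains open after this attempt.
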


\section*{Acknowledgements}

Much of this work was done when A. Chan, L. Stokols, and A. Theobold participated in the Central Michigan University NSF-REU program in the summer of 2013. M.S. Copenhaver and S.K. Narayan were Graduate Student Mentor and Faculty Mentor, respectively.


\bibliographystyle{amsplain}
\bibliography{References}

\end{document}